\theoremstyle{plain}
\newtheorem{theorem}{Theorem}[section]
\newtheorem{lemma}[theorem]{Lemma}
\newtheorem{corollary}[theorem]{Corollary}
\newtheorem{proposition}[theorem]{Proposition}
\theoremstyle{definition}
\newtheorem{definition}[theorem]{Definition}
\numberwithin{equation}{section}
\DeclareMathOperator{\supp}{supp}
\title{Singularities of solutions of nonlocal nonlinear equations}
\author{Minhyun Kim}
\address{Department of Mathematics \& Research Institute for Natural Sciences, Hanyang University, 04763 Seoul, Republic of Korea}
\email{minhyun@hanyang.ac.kr}
\author{Se-Chan Lee}
\address{School of Mathematics, Korea Institute for Advanced Study, 02455 Seoul, Republic of Korea}
\email{sechan@kias.re.kr}
\subjclass[2020]{35A21, 35B65, 35R09}
\keywords{Isolated singularity, nonlocal equation, removable singularity}
\thanks{M. Kim is supported by the National Research Foundation of Korea (NRF) grant funded by the Korean government (MSIT) (RS-2023-00252297). S.-C. Lee is supported by the KIAS Individual Grant (No. MG099001) at the Korea Institute for Advanced Study.}
\begin{document}

	\begin{abstract}
		We study the local behavior of weak solutions, with possible singularities, of nonlocal nonlinear equations. We first prove that sets of capacity zero are removable for weak solutions under certain integrability conditions. We then characterize the asymptotic behavior of singular solutions near an isolated singularity in terms of the fundamental solution.
	\end{abstract}
	
	\maketitle

	\section{Introduction}

	The classical results due to Serrin~\cite{Ser64, Ser65} deal with the local behavior of weak solutions of quasilinear equations in divergence form, whose model equation is the $p$-Laplace equation $-\Delta_p u=0$ for $p>1$. Regarding the regular behavior of solutions, he established the local boundedness, the weak Harnack inequality, the Harnack inequality and H\"older continuity by adapting Moser's iteration technique \cite{Mos60, Mos61} to the quasilinear setting. These regularity results were then employed to investigate the singularity of solutions, leading to the removable singularity theorem and the isolated singularity theorem. He first showed that a solution $u$ in $\Omega \setminus E$ can be extended as a solution of the same equation in the entire domain $\Omega$, provided that $E$ is negligible in the sense of capacity and that $u$ satisfies certain integrability conditions. Furthermore, he proved that singular solutions near an isolated singularity must behave like the fundamental solution of the $p$-Laplace equation.

	The goal of this paper is to establish a nonlocal counterpart of the results in celebrated papers \cite{Ser64, Ser65}. In particular, we are interested in the nature of removable singularities of solutions and the behavior of a solution near an isolated singularity in the nonlocal nonlinear framework. It is noteworthy that the local regularity of weak solutions of nonlocal equations has been relatively well studied in various works, including those by Caffarelli--Chan--Vasseur~\cite{CCV11} and Kassmann~\cite{Kas09} for linear equations and by Di Castro--Kuusi--Palatucci~\cite{DCKP14, DCKP16} and Cozzi~\cite{Coz17} for nonlinear equations.
	
	Let $n \in \mathbb{N}$, $0<s<1<p<\infty$ and $\Lambda \geq 1$, and assume that $sp \leq n$. Let $\Omega$ be a nonempty open set in $\mathbb{R}^n$, which may be unbounded. We are concerned with weak solutions $u$ of the nonlocal nonlinear equation
	\begin{equation}\label{eq-main}
		\mathcal{L}u+b(x, u)=0
	\end{equation}
	in $\Omega$ or in a subset of $\Omega$, where $\mathcal{L}$ is an integro-differential operator of the form
	\begin{equation*}
		\mathcal{L}u(x) \coloneqq 2\, \mathrm{p.v.} \int_{\mathbb{R}^n} |u(x)-u(y)|^{p-2} (u(x)-u(y)) k(x, y) \,\mathrm{d}y,
	\end{equation*}
	with a measurable kernel $k: \mathbb{R}^n \times \mathbb{R}^n \to [0, \infty]$ satisfying the symmetry $k(x, y)=k(y, x)$ and the ellipticity condition
	\begin{equation}\label{eq-ellipticity}
		\frac{\Lambda^{-1}}{|x-y|^{n+sp}} \leq k(x, y) \leq \frac{\Lambda}{|x-y|^{n+sp}},
	\end{equation}
    and $b: \Omega \times \mathbb{R} \to \mathbb{R}$ is a measurable function satisfying the structure condition
	\begin{equation}\label{eq-str}
		|b(x, z)| \leq b_1(x) |z|^{p-1} + b_2(x)
	\end{equation}
	with nonnegative functions $b_1, b_2 \in L^{n/(sp-\varepsilon)}(\Omega)$ ($0<\varepsilon<sp$). The principal model equation is the fractional $p$-Laplace equation $(-\Delta_p)^su=0$.
	
	Our first main theorem on the removable singularity reads as follows. See Section~\ref{sec-preliminaries} for the definitions of weak solutions and sets of capacity zero.
 
	\begin{theorem}\label{thm-rem}
		Let $E$ be a relatively closed set in $\Omega$ of $(t, q)$-capacity zero, where
		\begin{equation*}
			q\geq p \quad\text{and}\quad
			\begin{cases}
				t=s &\text{if}~q=p, \\
				t \in (s, 1) &\text{if}~q>p.
			\end{cases}
		\end{equation*}
		If $u$ is a weak solution of \eqref{eq-main} in $\Omega \setminus E$ such that $u$ is continuous in $\Omega \setminus E$ and $u \in L^{(1+\delta)\theta}(\Omega \setminus E)$ for some $\delta>0$, where
		\begin{equation*}
			\theta=q(p-1)/(q-p),
		\end{equation*}
		then $u$ has a representative that solves the same equation \eqref{eq-main} in $\Omega$ and is continuous in $\Omega$.
	\end{theorem}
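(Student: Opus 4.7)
The plan is to extend the weak formulation of \eqref{eq-main} from $\Omega\setminus E$ to all of $\Omega$ by exhausting the singular set $E$ with capacitary cutoffs, and then to invoke the local continuity theory for nonlocal nonlinear equations. Fix $\varphi\in C_c^\infty(\Omega)$ with $\supp\varphi\subset K\Subset\Omega$. The $(t,q)$-capacity-zero hypothesis produces $\eta_k\in C_c^\infty(\mathbb{R}^n)$ with $0\leq\eta_k\leq 1$, $\eta_k\equiv 1$ on an open neighborhood of $E\cap K$, $[\eta_k]_{W^{t,q}(\mathbb{R}^n)}\to 0$, and (after a standard truncation) $\eta_k\to 0$ a.e.\ in $\mathbb{R}^n$. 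Since $\psi_k:=\varphi(1-\eta_k)$ vanishes on a neighborhood of $E$, it is admissible in the weak formulation on $\Omega\setminus E$:
\begin{equation*}
\int_{\mathbb{R}^n}\!\!\int_{\mathbb{R}^n}|u(x)-u(y)|^{p-2}(u(x)-u(y))(\psi_k(x)-\psi_k(y))k(x,y)\,\mathrm{d}x\,\mathrm{d}y+\int_\Omega b(x,u)\psi_k\,\mathrm{d}x=0.
\end{equation*}
Letting $k\to\infty$ and recovering the weak formulation with $\psi_k$ replaced by $\varphi$ is the main work.

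The lower-order correction $\int b(x,u)\varphi\eta_k\,\mathrm{d}x$ tends to $0$ by dominated convergence, using \eqref{eq-str}, the integrability $u\in L^{(1+\delta)\theta}$, and $\eta_k\to 0$ a.e. The nonlocal correction is handled through the Leibniz-type split $(\varphi\eta_k)(x)-(\varphi\eta_k)(y)=\varphi(x)(\eta_k(x)-\eta_k(y))+\eta_k(y)(\varphi(x)-\varphi(y))$: the second summand is small since $|\varphi(x)-\varphi(y)|\lesssim |x-y|\wedge 1$ combines with $\eta_k\to 0$ in $L^r$ for $r$ provided by the $W^{t,q}$ Sobolev embedding, while the first — the Leibniz-defect term — is the heart of the argument. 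I would bound it via a three-fold H\"older inequality with conjugate exponents $q$, $q/(q-p)$, and $q/(p-1)$ (summing to $1$): the $q$-slot captures $[\eta_k]_{W^{t,q}}\to 0$ through the Gagliardo quotient $(\eta_k(x)-\eta_k(y))/|x-y|^{n/q+t}$; the $q/(q-p)$-slot absorbs $|u(x)-u(y)|^{p-1}\leq C(|u(x)|^{p-1}+|u(y)|^{p-1})$ using $u\in L^{(1+\delta)\theta}$, since $(p-1)\cdot q/(q-p)=\theta$ and the extra $\delta>0$ provides the margin needed to localise against $\supp\varphi$; the $q/(p-1)$-slot absorbs the residual power of $|x-y|$. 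A separate near-/far-diagonal split, together with the polynomial decay of $k$, controls the nonlocal tails coming from the global integration in $y$. In the boundary case $q=p$, $t=s$ — where $\theta$ degenerates to $+\infty$ and the integrability condition reduces to $u\in L^\infty_{\mathrm{loc}}$ — a two-exponent H\"older pairing $[\eta_k]_{W^{s,p}}$ against the $W^{s,p}$-Gagliardo seminorm of $u$ on $\supp\varphi$ (furnished by a Caccioppoli estimate on $\Omega\setminus E$) suffices.

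Passing to the limit yields the weak formulation of \eqref{eq-main} throughout $\Omega$ for the representative of $u$ obtained by redefining it on the null set $E$. The local boundedness and H\"older continuity theory for equations of type \eqref{eq-main} (cf.\ \cite{DCKP14,DCKP16,Coz17} and the results recalled in Section~\ref{sec-preliminaries}) then produces a continuous representative on $\Omega$. The main obstacle is the three-fold H\"older estimate above: the kernel singularity on scale $sp$ must be reconciled simultaneously with the cutoff regularity scale $(t,q)$ and the integrability scale $(1+\delta)\theta$ of $u$, and the precise exponent $\theta=q(p-1)/(q-p)$ is forced by the requirement that the three H\"older indices be conjugate.
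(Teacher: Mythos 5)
Your proposal only addresses the passage from test functions $\varphi(1-\eta_k)$ to $\varphi$, but it entirely skips the first — and in fact harder — half of the argument: showing that the a priori membership $u\in W^{s,p}_{\mathrm{loc}}(\Omega\setminus E)$ upgrades to $u\in W^{s,p}_{\mathrm{loc}}(\Omega)$. This is not a formality. By Definition~\ref{def-sol} a weak solution in $\Omega$ must lie in $W^{s,p}_{\mathrm{loc}}(\Omega)\cap L^{p-1}_{sp}(\mathbb{R}^n)$, and the quantity $\mathcal{E}(u,\varphi)$ in the limit formulation is not even well-defined without it. The paper spends the bulk of its proof precisely here: using the Moser-type Caccioppoli estimate (Lemma~\ref{lem-Caccio2}) with the capacitary cutoffs $\bar\eta_j$ built into the test function, together with the special truncation $F$ (the minimum of two polynomials, \eqref{eq-F}) chosen so that $F(\bar u)^p\lesssim\bar u^{(1+\delta)(p-1)}$ which is what the hypothesis $u\in L^{(1+\delta)\theta}$ controls. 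Fatou's lemma removes $\bar\eta_j$ from the left side of \eqref{eq-Caccio2}, and the $[\bar\eta_j]_{W^{t,q}}\to 0$ hypothesis kills the cross-term $I_1$ on the right; iterating then yields $u\in L^\infty(B_{R/2})$ and $u\in W^{s,p}(B_{R/2})$. Nothing in your proposal produces this.

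Your proposed three-fold H\"older estimate for the Leibniz-defect term also does not close. After the bound $|u(x)-u(y)|^{p-1}\leq C(|u(x)|^{p-1}+|u(y)|^{p-1})$ you have given up the cancellation at the diagonal, and the exponent bookkeeping then fails: with H\"older exponents $q$, $q/(q-p)$, $q/(p-1)$, putting $|\eta_k(x)-\eta_k(y)|/|x-y|^{n/q+t}$ in the first slot leaves $|x-y|^{-(n(q-1)/q+sp-t)}$ to be distributed between the remaining two slots, yet integrability of each slot over a bounded region forces the total strictly below $n(q-p)/q+n(p-1)/q=n(q-1)/q$. Thus the scheme requires $t>sp$, which fails whenever $t$ is close to $s$ (and fails always when $sp=n$, since then $t<1\leq n=sp$). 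The paper avoids this by keeping the difference quotient of $u$: its estimate \eqref{eq-J1} pairs $[u]_{W^{s,p}(B_{R/2})}^{p-1}$ against $[\bar\eta_j]_{W^{t,q}(B_{R/2})}$ via a two-exponent H\"older and Lemma~\ref{lem-Jensen} — but that estimate is only available after the first half has established $[u]_{W^{s,p}(B_{R/2})}<\infty$. Your treatment of the boundary case $q=p$, $t=s$ makes this circularity explicit: you invoke "$[u]_{W^{s,p}}$ on $\supp\varphi$ furnished by a Caccioppoli estimate on $\Omega\setminus E$," but $\supp\varphi$ meets $E$ and a Caccioppoli estimate on $\Omega\setminus E$ controls the seminorm only on sets compactly contained in $\Omega\setminus E$; obtaining a $W^{s,p}$-bound across $E$ is exactly what must be proved.
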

	A few remarks on Theorem~\ref{thm-rem} are in order. In contrast to the local case, $u$ is a priori defined on the whole of $\mathbb{R}^n$ in Theorem~\ref{thm-rem}. Moreover, since $E$ is the set of measure zero by Lemma~\ref{lem-measurezero}, the condition $u \in L^{(1+\varepsilon)\theta}(\Omega \setminus E)$ in Theorem~\ref{thm-rem} is equivalent to $u \in L^{(1+\varepsilon)\theta}(\Omega)$. When $q=p$, this condition is understood as $u \in L^\infty(\Omega)$. Note, however, that the requirement $u \in W^{s, p}_{\mathrm{loc}}(\Omega\setminus E)\cap L^{p-1}_{sp}(\mathbb{R}^n)$ for $u$ to be a weak solution cannot be replaced by $u \in W^{s, p}_{\mathrm{loc}}(\Omega)\cap L^{p-1}_{sp}(\mathbb{R}^n)$ (see Definition~\ref{def-sol}). This makes the problem significantly more challenging.

	If $E$ is a singleton, then $E$ is of $(s, p)$-capacity zero according to Lemma~\ref{lem-cap-point}. Thus, Theorem~\ref{thm-rem} implies the following removable isolated singularity theorem under a suitable growth condition in the subcritical case.
	
	\begin{corollary}\label{cor-rem}
		Assume that $sp<n$ and that $0 \in \Omega$. Let $u$ be a weak solution of \eqref{eq-main} in $\Omega \setminus \{0\}$ such that
		\begin{equation*}
			u(x)=O\left(|x|^{-(n-sp)/(p-1)+\delta} \right)
		\end{equation*}
		near the origin for some $\delta>0$. Then $u$ has a representative that solves the same equation \eqref{eq-main} in $\Omega$ and is continuous in $\Omega$.
	\end{corollary}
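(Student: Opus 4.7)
The plan is to verify the hypotheses of Theorem~\ref{thm-rem} with $E=\{0\}$ on a small ball $B_R=B_R(0)\subset\Omega$, and then to glue the resulting continuous representative with $u|_{\Omega\setminus\{0\}}$ by a standard partition-of-unity localization of test functions. The continuity of $u$ on $\Omega\setminus\{0\}$ required by Theorem~\ref{thm-rem} is supplied by the interior H\"older regularity theory for weak solutions of nonlocal $p$-Laplace-type equations with lower-order terms satisfying \eqref{eq-str} (cf.\ \cite{DCKP14, DCKP16, Coz17}).

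Two regimes arise according to the size of $\delta$. If $\delta\ge(n-sp)/(p-1)$, the growth bound forces $u$ to be locally bounded near the origin; combined with continuity on $\Omega\setminus\{0\}$, this gives $u\in L^\infty(B_R\setminus\{0\})$, and Theorem~\ref{thm-rem} applies with $(t,q)=(s,p)$, since $\{0\}$ has $(s,p)$-capacity zero by Lemma~\ref{lem-cap-point}. In the genuine regime $\delta\in(0,(n-sp)/(p-1))$, the plan is to choose $q$ and $t$ with
$$\frac{np}{sp+\delta(p-1)}<q<\frac{n}{s},\qquad t\in(s,\,n/q).$$
The interval for $q$ is non-empty because $\delta(p-1)>0$, and its lower bound automatically forces $q>p$. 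Since $tq<n$, the $(t,q)$-analog of Lemma~\ref{lem-cap-point} then gives that $\{0\}$ is of $(t,q)$-capacity zero.

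It remains to verify the integrability. Setting $\theta=q(p-1)/(q-p)$, the lower bound $q>np/(sp+\delta(p-1))$ is equivalent, after a short manipulation, to
$$\left(\frac{n-sp}{p-1}-\delta\right)\theta<n,$$
so the growth assumption $|u(x)|\le C|x|^{-(n-sp)/(p-1)+\delta}$ yields $|u|^{(1+\delta')\theta}\in L^1(B_R\setminus\{0\})$ for sufficiently small $\delta'>0$, the part away from the origin being controlled by continuity. Theorem~\ref{thm-rem} then produces a continuous solution on $B_R$ that coincides with $u$ on $B_R\setminus\{0\}$ and patches with $u|_{\Omega\setminus\{0\}}$ to a continuous solution on $\Omega$. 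The main obstacle is checking that the admissible $(t,q)$-range is non-empty; the computation above shows this is ensured precisely by the subcritical hypothesis $sp<n$.
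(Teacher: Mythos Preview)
Your proposal is correct and follows essentially the same approach as the paper: choose $q\in(p,n/s)$ so that $((n-sp)-\delta(p-1))q/(q-p)<n$, which is exactly your lower bound $q>np/(sp+\delta(p-1))$, then pick $t\in(s,1)$ with $tq<n$ and apply Theorem~\ref{thm-rem} with $E=\{0\}$. Two minor remarks: (i) when $n/q>1$ you should take $t\in(s,\min\{1,n/q\})$ rather than $t\in(s,n/q)$ to respect the constraint $t<1$ in Theorem~\ref{thm-rem}; (ii) your localization to $B_R$ and subsequent gluing is more careful than the paper's write-up (which tacitly assumes the integrability on all of $\Omega$), but since being a weak solution is local this extra care is harmless and arguably cleaner.
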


Let us mention that Del Pezzo--Quaas~\cite{DPQ23} recently verified that the fundamental solution $\Gamma_{s, p}$ for $(-\Delta_p)^s$ is given by 
	\begin{equation*}
		\Gamma_{s, p}(x)=
		\begin{cases}
			|x|^{-(n-sp)/(p-1)} & \text{if $sp<n$},\\
			-\log|x| & \text{if $sp=n$},
		\end{cases}
	\end{equation*}
up to multiplication constants depending only on $n$, $s$ and $p$. Thus, Corollary~\ref{cor-rem} states that $u$ has a removable singularity at the origin if it blows up slower than the fundamental solution. 

Our second main theorem is concerned with the classification of the behavior of solutions near an isolated singularity. We begin with the subcritical case. Here and below, $u \eqsim v$ means that $C_1 \leq u/v \leq C_2$ for some positive constants $C_1$ and $C_2$. ($u \lesssim v$ and $u \gtrsim v$ can be defined in the same way.)
	\begin{theorem}\label{thm-iso-sub}
		Assume that $sp<n$ and that $0 \in \Omega$. Let $u$ be a weak solution of \eqref{eq-main} in $\Omega \setminus \{0\}$ such that $u$ is continuous and bounded from below in $\Omega \setminus \{0\}$. Then either $u$ has a removable singularity at the origin or
		\begin{equation*}
			u \eqsim |x|^{-(n-sp)/(p-1)}
		\end{equation*}
		in a neighborhood of the origin.
	\end{theorem}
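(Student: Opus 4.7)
The strategy is to adapt Serrin's classical isolated singularity argument~\cite{Ser65} to the nonlocal setting, using the removable singularity result (Corollary~\ref{cor-rem}) and the nonlocal Harnack inequality with tail from~\cite{DCKP14, Coz17}. First, since $u$ is bounded below, I would replace $u$ by $v = u + C$ for some constant $C$ large enough that $v \geq 0$ in a punctured neighborhood of the origin. Because $\mathcal{L}$ annihilates constants, $v$ satisfies an equation of the same type \eqref{eq-main} with $\tilde b(x, z) = b(x, z - C)$, and $\tilde b$ still fulfils the structure condition \eqref{eq-str} with enlarged but admissible coefficients. A dichotomy then applies: if $\limsup_{x \to 0} u(x) < \infty$, continuity on $\Omega \setminus \{0\}$ together with the lower bound implies that $u$ is bounded in a punctured neighborhood of $0$, so the growth assumption of Corollary~\ref{cor-rem} is trivially met and the singularity is removable.

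It therefore suffices to prove $u \eqsim |x|^{-(n-sp)/(p-1)}$ in the remaining case $\limsup_{x \to 0} u(x) = \infty$. Set $m(r) = \inf_{|x|=r} u$ and $M(r) = \sup_{|x|=r} u$. Covering the annulus $B_{2r} \setminus \overline{B_{r/2}}$ by finitely many balls of radius $\sim r$ and chaining the nonlocal Harnack inequality on each, I would obtain an annular Harnack of the form
\[
M(r) \leq C_1\, m(r) + C_2\, r^{sp/(p-1)}\, \mathrm{Tail}(u_+; 0, r)^{1/(p-1)},
\]
uniformly for small $r$, where the semilinear term $b(x, u)$ is absorbed using $b_1, b_2 \in L^{n/(sp - \varepsilon)}$, which yields a smallness gain on small balls via H\"older's inequality and scaling. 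The upper bound $u(x) \lesssim |x|^{-(n-sp)/(p-1)}$ then follows by comparing $u$ on an annulus $B_{R_0} \setminus B_r$ with $A\,\Gamma_{s,p}$, where $\Gamma_{s,p}$ is the fundamental solution of Del Pezzo--Quaas~\cite{DPQ23}: choose $A$ so that the ordering holds on $\partial B_{R_0}$ (where $u$ is bounded) and on $\partial B_r$ (where it is guaranteed by the annular Harnack applied inductively), propagate it inward via a weak comparison principle for \eqref{eq-main}, and let $r \downarrow 0$. The matching lower bound $u(x) \gtrsim |x|^{-(n-sp)/(p-1)}$ is obtained by a symmetric comparison from below, using that the annular Harnack forces $m(r) \to \infty$ and hence $u$ sits above $a\,\Gamma_{s,p}$ on the boundary spheres for a suitable constant $a > 0$.

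The principal obstacle is the management of nonlocal tails: $u$ is itself singular at $0$, so $\mathrm{Tail}(u_+; 0, r)$ cannot be bounded independently of $r$. The remedy is a scale-by-scale bootstrap, in which the upper bound $u \leq A|x|^{-(n-sp)/(p-1)}$ already established on $B_{R_0} \setminus B_r$ is fed into the tail estimate at scale $r$; in this regime the tail is of the same order as $m(r)$ and can be absorbed into $C_1 m(r)$ by enlarging the constant, closing the induction. A secondary difficulty is that $b(x, u)$ need not be small in $L^\infty$ since $u$ blows up near the origin, but the strict inequality $b_1, b_2 \in L^{n/(sp - \varepsilon)}$ with $\varepsilon > 0$ produces a gain of smallness at small scales that is exactly what is needed to absorb this term into the Harnack and comparison constants in a standard Moser-type fashion.
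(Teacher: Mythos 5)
Your proposed route rests on a comparison principle with the fundamental solution $\Gamma_{s,p}$ of Del Pezzo--Quaas, and this is precisely where it breaks down. First, $\Gamma_{s,p}$ is a solution of $(-\Delta_p)^s\Gamma_{s,p}=0$ away from the origin, but the operator $\mathcal{L}$ in \eqref{eq-main} only has kernel bounds \eqref{eq-ellipticity}; in general $\mathcal{L}\Gamma_{s,p}\neq0$, so there is nothing to compare $u$ against, and no multiplicative constant $A$ fixes this. Second, even in the model case $\mathcal{L}=(-\Delta_p)^s$, the nonlocal weak comparison principle on the annulus $B_{R_0}\setminus B_r$ requires the ordering $u\le A\Gamma_{s,p}$ on the \emph{entire} complement $\mathbb{R}^n\setminus(B_{R_0}\setminus B_r)$, not merely on the two boundary spheres. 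This includes the inner ball $B_r$, which is exactly where you are trying to prove the bound; the ``scale-by-scale bootstrap'' you invoke does not close this circularity because, at the step where you feed the bound at radius $r$ into the tail, you must already know it on $B_r$, which is what that very step is supposed to deliver. The paper flags this obstruction explicitly (the maximum principle ``depends not only on the boundary data but also on the complement data'') and deliberately avoids it.

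The paper's actual argument is structurally quite different. For the upper bound it does not compare with $\Gamma_{s,p}$ at all; instead it tests the equation with $\varphi=\min\{u\eta/m,1\}$, where $m=\min_{\overline{B}_r\setminus B_{r/2}}u$ (or on one-sided intervals when $n=1$), exploits the invariant $\overline K=\mathcal E(u,\varphi)+\int b(\varphi-1)$ from Lemma~\ref{lem-const}, and lower-bounds $\mathcal{E}(\varphi,\varphi)$ by the $(s,p)$-capacity of an annulus via Lemma~\ref{lem-cap-ball}. The $b$-term is then handled not by a ``standard Moser absorption'' but by a genuine difference inequality (Lemma~\ref{lem-integrable}) showing $\int_{B_r}|b(x,u)|\,\mathrm dx\lesssim r^{\varepsilon}$. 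For the lower bound, the paper introduces a second invariant $K=\mathcal E(u,\varphi)+\int b\varphi$, proves its positivity (Lemma~\ref{lem-positive-K}) using Corollary~\ref{cor-rem} and the already-established upper bound, and then tests with the $(-\Delta_p)^s$-potential of $\overline B_r$; crucially, the known upper bound on $u$ is used to control the complement contributions so that the tail terms stay subordinate, sidestepping the comparison principle. The step $\lim_{x\to0}u(x)=\infty$ is not a byproduct of an annular Harnack chain as you suggest, but requires the weak Harnack inequality (Lemma~\ref{lem-WHI}), proved via De Giorgi--type Caccioppoli estimates with a capacity cut-off. Your proposal therefore identifies the right preliminary reductions ($u\mapsto u+C$, the dichotomy via Corollary~\ref{cor-rem}, the need for an annular Harnack) but replaces the load-bearing part of the proof with a comparison argument that cannot work in this generality.
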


	As an immediate consequence of Theorem~\ref{thm-iso-sub}, an alternative version of Corollary~\ref{cor-rem} is obtained in the subcritical case as follows. We point out that Corollary~\ref{cor-rem2} improves Corollary~\ref{cor-rem} in terms of the growth condition, but it requires an additional boundedness assumption from one side to take advantage of Theorem~\ref{thm-iso-sub}.

\begin{corollary}\label{cor-rem2}
Assume that $sp<n$ and that $0 \in \Omega$. Let $u$ be a weak solution of \eqref{eq-main} in $\Omega \setminus \{0\}$ such that $u$ is bounded from one side in $\Omega \setminus \{0\}$ and
	\begin{equation*}
		u(x)=o\left(|x|^{-(n-sp)/(p-1)} \right)
	\end{equation*}
	near the origin. Then $u$ has a representative that solves the same equation \eqref{eq-main} in $\Omega$ and is continuous in $\Omega$.
\end{corollary}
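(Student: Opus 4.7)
The plan is to deduce Corollary~\ref{cor-rem2} directly from Theorem~\ref{thm-iso-sub} after a symmetry reduction that handles the two-sided version of ``bounded from one side.'' First, I would reduce to the case where $u$ is bounded \emph{from below}. If instead $u$ is bounded from above on $\Omega\setminus\{0\}$, set $v:=-u$. Because the integrand $|u(x)-u(y)|^{p-2}(u(x)-u(y))$ is odd in $u$, the operator satisfies $\mathcal{L}(-u)=-\mathcal{L}u$, so $v$ is a weak solution of $\mathcal{L}v+\tilde b(x,v)=0$ with $\tilde b(x,z):=-b(x,-z)$. Since $|\tilde b(x,z)|=|b(x,-z)|\leq b_1(x)|z|^{p-1}+b_2(x)$, the function $\tilde b$ satisfies the structure condition \eqref{eq-str} with the \emph{same} coefficients $b_1,b_2$; moreover $v$ is bounded from below and inherits the growth $v(x)=o(|x|^{-(n-sp)/(p-1)})$. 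Hence, possibly after this sign change, I may assume $u$ is bounded below on $\Omega\setminus\{0\}$.

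Next, I would invoke Theorem~\ref{thm-iso-sub}. The weak solution $u$ on $\Omega\setminus\{0\}$ is continuous there (working with its continuous representative supplied by the local regularity theory for nonlocal nonlinear equations referenced in the introduction) and bounded from below, so the dichotomy applies: either $u$ has a removable singularity at the origin, in which case there is nothing more to prove, or there exist constants $C_1,C_2>0$ such that
\[
  C_1\, |x|^{-(n-sp)/(p-1)} \;\leq\; u(x) \;\leq\; C_2\, |x|^{-(n-sp)/(p-1)}
\]
on a punctured neighborhood of the origin.

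Finally, I would use the growth hypothesis to exclude the second alternative. The assumption $u(x)=o(|x|^{-(n-sp)/(p-1)})$ is equivalent to $\lim_{x\to 0} u(x)\,|x|^{(n-sp)/(p-1)}=0$, which is incompatible with the strictly positive lower bound $u(x)\,|x|^{(n-sp)/(p-1)}\geq C_1>0$. Only the removable alternative can occur, and the resulting representative of $u$ solves \eqref{eq-main} in all of $\Omega$ and is continuous there. There is no substantive obstacle in this argument; the only point requiring verification is the invariance of the admissible class of equations under $u\mapsto -u$, which, as noted, is immediate from the oddness of $\mathcal{L}$ and the symmetric form of the bound in \eqref{eq-str}.
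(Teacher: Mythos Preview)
Your proposal is correct and matches the paper's intended argument: the paper states Corollary~\ref{cor-rem2} as ``an immediate consequence of Theorem~\ref{thm-iso-sub}'' without further proof, and your three steps (the sign reduction $u\mapsto -u$ to handle the bounded-from-above case, invoking the dichotomy in Theorem~\ref{thm-iso-sub}, and ruling out the singular alternative via the $o$-growth hypothesis) are exactly how that immediate consequence is unpacked. The sign-change manipulation you use is also the one the paper employs elsewhere (see the remark following Lemma~\ref{lem-Caccio4}).
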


	In the critical case $sp=n$, we are able to achieve the partial classification result in terms of the fundamental solution $\Gamma_{s, p}$.
	
	\begin{theorem}\label{thm-iso-log}
		Assume that $sp=n$ and that $0 \in \Omega$. Let $u$ be a weak solution of \eqref{eq-main} in $\Omega \setminus \{0\}$ such that $u$ is continuous and bounded from below in $\Omega \setminus \{0\}$. Then either $u$ has a removable singularity at the origin or
		\begin{equation*}
			u \lesssim -\log |x|
		\end{equation*}
		in a neighborhood of the origin.
	\end{theorem}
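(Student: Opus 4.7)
The argument splits into a dichotomy: either $u$ is bounded in a punctured neighborhood of $0$, in which case Theorem~\ref{thm-rem} gives a removable singularity, or $u$ is unbounded, in which case a scale-by-scale comparison with the fundamental solution $\Gamma_{s,p}(x)=-\log|x|$ produces the logarithmic upper bound. Since $u$ is bounded from below on $\Omega\setminus\{0\}$, after subtracting a constant (which only perturbs $b$ in a way still compatible with \eqref{eq-str}) I may assume $u\geq 0$ on a punctured ball $B_{\rho_0}\setminus\{0\}\subset\Omega$. If $u$ is additionally bounded there, then $u\in L^\infty_{\mathrm{loc}}(\Omega)$, which is the limiting $q=p$, $t=s$ case of Theorem~\ref{thm-rem} with $\theta=\infty$ (cf.\ the remarks following that theorem), so $0$ is a removable singularity.

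Assume from now on that $u$ is unbounded near the origin, and set $M(r):=\sup_{B_{2r}\setminus B_{r/2}} u$. The heart of the proof is the scale-incremental estimate
\[
M(r/2)\leq M(r)+C_1
\]
for all $r\in(0,\rho_0/4]$, with $C_1$ independent of $r$. Iterated along $r_k=2^{-k}\rho_0$, this yields $M(r_k)\leq M(\rho_0)+C_1 k\lesssim -\log r_k$, which translates directly into $u(x)\lesssim -\log|x|$ in a neighborhood of the origin.

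To establish the increment, I would compare $u$ with a test function of the form $v(x)=A(-\log|x|)+B$ on a suitable annulus around the singularity, where $A$ is a universal constant determined by the ellipticity data and $B=M(r)+O(1)$. Since $\Gamma_{s,p}=-\log|\cdot|$ is $\mathcal{L}$-harmonic on $\mathbb{R}^n\setminus\{0\}$ by Del Pezzo--Quaas~\cite{DPQ23}, and the absorption $b(x,v)$ is controlled through \eqref{eq-str} and the $L^{n/(sp-\varepsilon)}$ integrability of $b_1,b_2$ on small balls, $v$ is a supersolution of \eqref{eq-main} there. The outer boundary condition $u\leq v$ on $B_r^c$ is enforced by the previous scale bound and by an appropriate choice of $B$, while the inner behavior is handled by taking the inner radius of the annulus to $0$, using that $v(x)\to\infty$ as $x\to 0$ together with the continuity of $u$ on each sphere $\partial B_\rho$, $\rho>0$. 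A nonlocal comparison principle for \eqref{eq-main} in the nonlinear framework of \cite{DCKP14, DCKP16, Coz17} then propagates $u\leq v$ throughout the annulus, delivering the desired $O(1)$ increment.

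The main obstacle is the delicate interplay between this comparison and the nonlocal tail in the critical regime $sp=n$. The fundamental solution $-\log|x|$ is unbounded at infinity, so $v$ must be truncated outside a fixed large ball, and the resulting error---as well as the contribution of the tail $T(u;r):=\bigl(r^{sp}\int_{\mathbb{R}^n\setminus B_r}|u(y)|^{p-1}|y|^{-n-sp}\,\mathrm{d}y\bigr)^{1/(p-1)}$ of $u$ itself---must be uniformly absorbable into the increment constant $C_1$. Since the tail of $u$ at scale $r$ is precisely the quantity that an a priori estimate $u\lesssim -\log|x|$ would control, a bootstrap, induction, or stopping-time scheme is required to close the loop without circularity.
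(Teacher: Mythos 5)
Your plan takes a genuinely different route from the paper, and it has gaps that you yourself flag but do not close, so it does not constitute a proof. The paper's argument for Theorem~\ref{thm-iso-log} is \emph{not} a scale-by-scale comparison with a log barrier. Instead it is energy/capacity based: it introduces the invariant $\overline K=\mathcal E(u,\varphi)+\int b(\varphi-1)$ of Lemma~\ref{lem-const}, tests with $\varphi=\min\{u\eta/m_r,1\}$ (where $m_r=\min_{\overline B_r\setminus B_{r/2}}u$), observes that $\varphi$ is admissible for $\mathrm{cap}_{s,p}(\overline B_r\setminus B_{r/2},B_{3R})$, and pits the lower bound $\mathcal E(\varphi,\varphi)\gtrsim(-\log r)^{1-p}$ (Lemma~\ref{lem-cap-ball}) against the upper bound $m_r^p\mathcal E(\varphi,\varphi)\lesssim 1+(1+\mathcal B(r))m_r$ coming from the equation. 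Serrin's algebraic lemma gives $m_r\lesssim(1+\mathcal B(r))^{1/(p-1)}(-\log r)$, after which Lemma~\ref{lem-integrable} shows $\mathcal B(r)\lesssim r^{\varepsilon/2}$, so the prefactor is bounded and the Harnack inequality on annuli transfers $m_r$ to the sup.

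There are three concrete problems with your plan as written. First, the supersolution barrier step relies on $-\log|x|$ being $\mathcal L$-harmonic, but Del Pezzo--Quaas only prove this for $(-\Delta_p)^s$; the theorem here is stated for arbitrary symmetric kernels $k$ satisfying only \eqref{eq-ellipticity}, and for such $\mathcal L$ one cannot expect $\mathcal L(-\log|\cdot|)$ to have a sign. Second, you invoke a nonlocal comparison principle in an annulus, but the paper deliberately abandons maximum/comparison principle reasoning here precisely because in the nonlocal setting it is sensitive to the complement data (the tail), not just the boundary; the paper's substitute is the weak Harnack inequality of Lemma~\ref{lem-WHI}, which packages the complement data into a finite tail term. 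Third --- and most importantly --- your own last paragraph admits that the tail of $u$, together with the truncation of $v$ needed because $-\log|x|$ is unbounded at infinity, must be absorbed into the increment constant via an unspecified ``bootstrap, induction, or stopping-time scheme.'' That is exactly the part that needs to be supplied; as stated the scheme is circular, since controlling the tail at scale $r$ is precisely what the sought-after bound $u\lesssim-\log|x|$ would provide. The capacity/invariant argument of the paper avoids this circularity because the tail only appears multiplicatively through quantities that are a priori finite (via $L^{p-1}_{sp}(\mathbb R^n)$), not through the unknown bound on $u$ near $0$.

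One smaller point: your reduction to $u\ge 0$ and the use of the remark after Theorem~\ref{thm-rem} ($q=p$, $t=s$, ``$\theta=\infty$'' meaning $u\in L^\infty$) to dispose of the bounded case are fine and agree with the paper.
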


	Let us briefly summarize the historical background. Removable singularity and isolated singularity of solutions of \textit{local} equations have been of great interest; just to name a few, we refer to \cite{Boc03, Car67, GS55} for linear equations, \cite{NSS03, Ser64, Ser65} for quasilinear equations and \cite{ASS11, Lab00, Lab01} for fully nonlinear equations. All results on isolated singularities share the essence that if $u$ is a solution in the punctured ball, then either $u$ has a removable singularity or $u$ behaves like a fundamental solution of the associated local operator. We also refer to \cite{CGS89, CL99, KMPS99} for the scalar curvature equations and \cite{Avi83, BVGHV19, BVV91, GS81, Lio80} for other semilinear equations, where the asymptotic behavior is captured by functions different from fundamental solutions. See also comprehensive monographs \cite{QS19, Ver96} and references therein. 

    There has been a growing interest in the study on singularities of solutions of \emph{nonlocal} equations. Li--Wu--Xu~\cite{LWX18}, Li--Liu--Wu--Xu~\cite{LLWX20} and Klimsiak~\cite{Kli25} established the B\^ocher-type theorems for nonnegative solutions of fractional linear equations, which characterize isolated singularities by Dirac mass $\delta_0$ in the distribution sense. Moreover, there have been several works focusing on fractional semilinear equations of the form
    \begin{equation}\label{eq-semilinear}
        (-\Delta)^s u = u^\alpha
    \end{equation}
    for some $\alpha>1$. Caffarelli--Jin--Sire--Xiong~\cite{CJSX14}, Chen--Quaas~\cite{CQ18}, Chen--V\'eron~\cite{CV23}, Chen--Li--Ou~\cite{CLO05} and Yang--Zou~\cite{YZ19,YZ21} investigated the local behavior of solutions of \eqref{eq-semilinear} in $B_1 \setminus \{0\}$ with zero complement data for different ranges of $\alpha$. In particular, for the critical case $\alpha=(n+2s)/(n-2s)$ that corresponds to the fractional Yamabe equation, Jin--de Queiroz--Sire--Xiong~\cite{JdQSX17} described local behavior of solutions with a singular set of fractional capacity zero. Let us also mention the work by Ao--Gonz\'alez--Hyder--Wei~\cite{AGHW22}, where a removability result for the equation \eqref{eq-semilinear} was established. 
    
    We point out that most of these papers rely heavily on the Caffarelli--Silvestre extension~\cite{CS07} that guarantees the validity of various tools in the local setting. In fact, the condition that $u$ vanishes on the complement of $B_1$ removes the effect of the nonlocal tail term, which encodes the long-range interaction of $u$.

The novelty of the present paper lies in preserving the nonlocal character of the problem. We impose no restrictions on the behavior of solutions in the complement of $\Omega$ nor on the structure of $\mathcal{L}$. To the best of our knowledge, our results are new even in the linear case, i.e.\ $p=2$. Furthermore, despite the technical difficulties arising from $b$, we take this lower-order term $b$ into consideration to potentially extend our results to the semilinear context. In particular, one may further investigate the local behavior of positive singular solutions of nonlocal nonlinear equations of the form $\mathcal{L}u=u^\alpha$ with $1<\alpha < n(p-1)/(n-sp)$, similar to how \cite{BVGHV19, GS81} employed the results of \cite{Ser64, Ser65} in the local semilinear setting. 
	
Let us finally illustrate the strategies to deduce the principal results of the present paper. In contrast to the simple display of our main theorems, their proofs are quite intricate and rely heavily on advanced tools from nonlocal regularity theory. As mentioned earlier, the main difficulty in proving the removability theorem (Theorem~\ref{thm-rem}) arises from the lack of the regularity of $u$ in the whole domain $\Omega$. To address this difficulty, we develop the De Giorgi--Nash--Moser theory tailored to our situation. Indeed, in Section~\ref{sec-regularity}, we obtain Moser-type Caccioppoli estimates that contain a cut-off function $\bar{\eta}$ vanishing in a neighborhood of $E$, which is introduced to avoid the possible singularity on $E$, and some powers of weak (sub-)solutions $u$. On the one hand, we need a polynomial of $u$ with a certain growth rate to use the integrability condition given in Theorem~\ref{thm-rem}. On the other hand, we also need a polynomial of $u$ with an arbitrarily large power in order for Moser's iteration to work. By taking a test function as the minimum of these two polynomials, we arrive at the desired Caccioppoli estimate.

To prove Theorem~\ref{thm-rem} by means of Caccioppoli-type estimates, we now choose $\bar{\eta}=\bar{\eta}_j$ as a sequence of functions approximating the capacity potential of $E$ in $\mathbb{R}^n$. Since $E$ is of capacity zero and $u$ satisfies the integrability condition, taking the limit as $j \to \infty$ eliminates the effect of $E$ (or $\bar{\eta}$) from the integral inequality. Moreover, another limiting argument that will be specified in Section~\ref{sec-removable} guarantees that the test function can be reduced to a single polynomial with an arbitrarily large power. We conclude by using the standard Moser's iteration method that $u$ belongs to the desired function space and so the set $E$ is indeed removable.
 
 The proof of the isolated singularity theorem (Theorem~\ref{thm-iso-sub} or \ref{thm-iso-log}) is carried out by combining several steps as follows. If a weak solution $u$ has a non-removable isolated singularity at $0$, then
	\begin{enumerate}[(i)]
		\item $\lim_{x \to 0}u(x)=\infty$: In the local setting, Lemma~\ref{lem-non-rem} follows from a relatively simple argument utilizing Theorem~\ref{thm-rem}, the Harnack inequality and the maximum principle. More precisely, the maximum principle plays a crucial role in verifying the desired limiting behavior of $u$ in the region between two particular spheres on which $u$ becomes very large. However, the maximum principle for nonlocal operators provides weaker information compared to the local case, as it depends not only on the boundary data but also on the complement data. To overcome this challenge, we replace the maximum principle by the weak Harnack inequality (Lemma~\ref{lem-WHI}), which encodes the global behavior of $u$ into the (finite) nonlocal tail term. In particular, we observe that a positive weak solution $u$ of \eqref{eq-main} in $\Omega \setminus \{0\}$ satisfies De Giorgi-type Caccioppoli estimates in $\Omega$, while $u$ itself may not belong to $W^{s, p}_{\mathrm{loc}}(\Omega)$. 
  
		\item $u \lesssim \Gamma_{s,p}$ near $0$: We begin with introducing an invariant quantity $\overline{K}\coloneqq \mathcal{E}(u, \varphi)+\int b(\varphi-1)$, where $\varphi$ is contained in a suitable function class, see Lemma~\ref{lem-const} for the precise description. Then we derive a weaker version of upper bounds for singular solutions in Lemma~\ref{lem-upper} based on a new localization technique. This technique reveals the relationship between the invariant value $\overline{K}$ and the capacity of a specific annulus, by clarifying the effect of the nonlocal tail term. Furthermore, we end up with the complete upper bound estimates after solving the difference inequality for the integral of $b$ in Lemma~\ref{lem-integrable}. We note that the desired upper bound follows from the fact that the capacity of an annulus can be captured by the growth rate of $\Gamma_{s,p}$ in view of Lemma~\ref{lem-cap-ball}.

         \item $u \gtrsim \Gamma_{s,p}$ near $0$: In Lemma~\ref{lem-K}, we define another invariant value $K\coloneqq \mathcal{E}(u, \varphi)+\int b \varphi$ for $\varphi$ in the same function class as before. By developing similar estimates on $K$ but in a more careful way together with the results from step (ii), it turns out that $K$ has a positive sign in Lemma~\ref{lem-positive-K}. The lower bound of singular solutions is obtained in Lemma~\ref{lem-lower}  by using the positivity of $K$ and suggesting an appropriate test function in terms of the $(-\Delta_p)^s$-potential. Even though an additional difficulty arises from the maximum principle unlike the local situation, the upper bound of $u$ allows us to restrict our attention to the boundary behavior of $u$ near spheres. A similar procedure in step (iii) seems to fail in the critical case $sp=n$, where we could only obtain the upper bounds.
	\end{enumerate}

	The paper is organized as follows. In Section~\ref{sec-preliminaries}, we summarize the notation and preliminary results on functional inequalities, weak solutions and capacities. Section~\ref{sec-regularity} consists of local regularity estimates, namely, various Caccioppoli-type estimates, H\"older regularity and the Harnack inequality. The proofs for Theorems~\ref{thm-rem} and \ref{thm-iso-sub} are presented in Sections~\ref{sec-removable} and \ref{sec-isolated}, respectively.

\section{Preliminaries}\label{sec-preliminaries}

\emph{Throughout the paper, we assume that $0<s<1<p<\infty$ and $n \in \mathbb{N}$, that $k$ is a symmetric measurable kernel satisfying \eqref{eq-ellipticity} with $\Lambda \geq 1$ and that $\Omega$ is a nonempty open subset of $\mathbb{R}^n$. (In Section~\ref{sec-isolated}, we further assume that $\Omega$ is bounded and contains the origin.) Moreover, whenever the equation \eqref{eq-main} is concerned, we also assume that $sp\leq n$ and that $b$ satisfies \eqref{eq-str} with nonnegative functions $b_1, b_2 \in L^{n/(sp-\varepsilon)}(\Omega)$ for some $\varepsilon \in (0, sp)$.}

\subsection{Functional inequalities}

In this section, we recall definitions of several function spaces and provide inequalities related to these spaces. The fractional Sobolev space $W^{s, p}(\Omega)$ consists of measurable functions $u: \Omega \to [-\infty, \infty]$ whose fractional Sobolev norm
\begin{align*}
\|u\|_{W^{s, p}(\Omega)}
&\coloneqq \left( \|u\|_{L^p(\Omega)}^p + [u]_{W^{s, p}(\Omega)}^p \right)^{1/p} \\
&\coloneqq \left( \int_\Omega |u(x)|^p \,\mathrm{d}x + \int_\Omega \int_\Omega \frac{|u(x)-u(y)|^p}{|x-y|^{n+sp}} \,\mathrm{d}y\,\mathrm{d}x \right)^{1/p}
\end{align*}
is finite. By $W^{s, p}_{\mathrm{loc}}(\Omega)$ we denote the space of functions $u$ such that $u \in W^{s, p}(G)$ for every open $G \Subset \Omega$.

Since we study nonlocal equations, the \emph{tail space}
\begin{equation*}
L^{p-1}_{sp}(\mathbb{R}^n) \coloneqq \left\{ u~\text{measurable}: \int_{\mathbb{R}^n} \frac{|u(y)|^{p-1}}{(1+|y|)^{n+sp}} \,\mathrm{d}y < \infty \right\}
\end{equation*}
is useful. Note that $u \in L^{p-1}_{sp}(\mathbb{R}^n)$ if and only if the \emph{nonlocal tail} (or \emph{tail} for short)
\begin{equation*}
\mathrm{Tail}(u; x_0, r) \coloneqq \left( r^{sp} \int_{\mathbb{R}^n \setminus B_r(x_0)} \frac{|u(y)|^{p-1}}{|y-x_0|^{n+sp}} \,\mathrm{d}y \right)^{1/(p-1)}
\end{equation*}
is finite for any $x_0 \in \mathbb{R}^n$ and $r>0$.

Some functional inequalities, such as the fractional Sobolev inequality and H\"older's inequality, are presented here. We begin with fractional Sobolev inequalities. See Di Nezza--Palatucci--Valdinoci~\cite[Theorems~6.5~and~6.7]{DNPV12} for their proofs.

\begin{theorem}[Fractional Sobolev inequality]
\label{thm-Sobolev}
Assume that $sp<n$ and let $p^{\ast}_s = np/(n-sp)$. Let $B=B_r(x_0)$ be a ball. Then there exists $C=C(n, s, p)>0$ such that
\begin{equation*}
\|u\|_{L^{p^{\ast}_s}(B)} \leq C \left( [u]_{W^{s, p}(B)} + r^{-s} \|u\|_{L^p(B)} \right)
\end{equation*}
for any $u \in W^{s, p}(B)$ and that
\begin{equation*}
\|u\|_{L^{p^{\ast}_s}(B)} \leq C [u]_{W^{s, p}(\mathbb{R}^n)}
\end{equation*}
for any measurable and compactly supported function $u:\mathbb{R}^n \to \mathbb{R}$.
\end{theorem}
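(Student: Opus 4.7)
The plan is to establish the global (second) inequality first and then deduce the local version via a cutoff and extension. For the global inequality, I would reduce by density to $u \in C^\infty_c(\mathbb{R}^n)$. The central device is the telescoping dyadic identity valid at every Lebesgue point $x$,
\[ u(x) = \sum_{k \in \mathbb{Z}} \bigl((u)_{B_{2^{-k-1}}(x)} - (u)_{B_{2^{-k}}(x)}\bigr), \]
whose partial sums have the right limits by Lebesgue differentiation as $k \to \infty$ and compact support of $u$ as $k \to -\infty$. A fractional Poincar\'e estimate bounds each consecutive difference by $C r_k^s g_{r_k}(x)$, where $r_k = 2^{-k}$ and $g_r(x)^p \coloneqq r^{-n}\iint_{B_{2r}(x)^2}\frac{|u(z)-u(w)|^p}{|z-w|^{n+sp}}\,\mathrm{d}w\,\mathrm{d}z$. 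Aggregating over dyadic scales with H\"older's inequality on the series produces a pointwise majorant of the form $|u(x)| \leq C (I_s G)(x)$, where $I_s$ is the Riesz potential of order $s$ and $G$ is an auxiliary function satisfying $\|G\|_{L^p(\mathbb{R}^n)} \lesssim [u]_{W^{s,p}(\mathbb{R}^n)}$. The Hardy--Littlewood--Sobolev inequality applied at exponents $p \to p^*_s$ then yields $\|u\|_{L^{p^*_s}(\mathbb{R}^n)} \leq C[u]_{W^{s,p}(\mathbb{R}^n)}$, from which the second stated inequality follows by restricting to $B$.

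For the local inequality, I would multiply $u \in W^{s,p}(B_r(x_0))$ by a smooth cutoff $\eta$ supported in $B_{2r}(x_0)$ with $\eta \equiv 1$ on $B_r(x_0)$ and $|\nabla \eta| \lesssim r^{-1}$. A fractional Leibniz-type estimate, obtained by splitting the Gagliardo integral according to whether each argument lies inside $B_{2r}(x_0)$, gives
\[ [\eta u]_{W^{s,p}(\mathbb{R}^n)}^p \leq C \bigl([u]_{W^{s,p}(B_{2r})}^p + r^{-sp}\|u\|_{L^p(B_{2r})}^p\bigr). \]
A standard $W^{s,p}$-extension operator reduces the $B_{2r}$ norms on the right-hand side to $B_r$ norms, and applying the already-established global inequality to the compactly supported function $\eta u$, which equals $u$ on $B_r(x_0)$, yields the claim.

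The main technical obstacle is the $L^p$-control $\|G\|_{L^p(\mathbb{R}^n)} \lesssim [u]_{W^{s,p}(\mathbb{R}^n)}$ of the auxiliary function built from the scale-indexed quantities $g_r$. Establishing this requires a careful Fubini argument exploiting the finite overlap of the dyadic balls together with a correct $\ell^p$-type aggregation across scales, so that the sum of the fractional Poincar\'e contributions does not blow up. Once this $L^p$-bound is secured, the Hardy--Littlewood--Sobolev step and the cutoff-plus-extension step are essentially routine; tracking the $r$-scaling through the chain of inequalities then produces the claimed $r^{-s}$ power in the local estimate.
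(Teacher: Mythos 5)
The paper does not actually prove this statement; both inequalities are cited from Di Nezza--Palatucci--Valdinoci \cite[Theorems~6.5 and~6.7]{DNPV12}, whose global inequality is established by a direct, elementary decomposition of $\mathbb{R}^n$ into cubes (no maximal functions, no Riesz potentials), and whose local inequality follows by a $W^{s,p}$-extension operator exactly as you describe. So your local step is essentially the reference argument (modulo the observation that one must \emph{extend first, then multiply by the cutoff}, since $u$ is given only on $B_r$ and the Leibniz estimate needs $u$ on $B_{2r}$); but your global step is a genuinely different route, and it is there that I see a real gap.

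The gap is not merely, as you suggest, proving the $L^p$-bound on the auxiliary function: it is constructing $G$ so that \emph{both} $|u(x)|\lesssim (I_sG)(x)$ and $\|G\|_p\lesssim[u]_{W^{s,p}(\mathbb{R}^n)}$ hold simultaneously. Your dyadic telescoping and fractional Poincar\'e give $|u(x)|\lesssim\sum_k r_k^s g_{r_k}(x)$ with $g_r(x)\le\bigl(\fint_{B_r(x)}H\bigr)^{1/p}$, where $H(z)=\int\frac{|u(z)-u(w)|^p}{|z-w|^{n+sp}}\,\mathrm{d}w$, so $\|H\|_{L^1}=[u]_{W^{s,p}}^p$. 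The natural candidate $G=H^{1/p}$ has exactly $\|G\|_p=[u]_{W^{s,p}}$, but Jensen gives $\fint_{B_r}G\le\bigl(\fint_{B_r}H\bigr)^{1/p}$, which is the \emph{wrong} direction: it yields $I_sG(x)\lesssim\sum_k r_k^s g_{r_k}(x)$, not the pointwise majorant you need. Choosing instead $G(z)=\sup_r g_r(z)\approx\bigl(MH(z)\bigr)^{1/p}$ does repair the pointwise inequality, but then $\|G\|_p^p=\|MH\|_{L^1}$, which is generically infinite for $H\in L^1$. I do not see a choice of $G$ for which both halves of your intermediate claim hold, so the Hardy--Littlewood--Sobolev step cannot be launched as stated.

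The good news is that the Riesz-potential detour is unnecessary and a small modification of your own first step closes the argument. From the single-scale bound (obtained directly by H\"older, without telescoping) $|u(x)-(u)_{B_R(x)}|\le CR^s D(x)$ with $D(x)=\bigl(\int\frac{|u(x)-u(y)|^p}{|x-y|^{n+sp}}\,\mathrm{d}y\bigr)^{1/p}$, together with $|(u)_{B_R(x)}|\le CR^{-n/p^*_s}\|u\|_{L^{p^*_s}}$, one optimizes over $R$ to get $|u(x)|\le C\|u\|_{L^{p^*_s}}^{sp/n}D(x)^{(n-sp)/n}$. Raising to the power $p^*_s$, integrating, and using $(n-sp)p^*_s/n=p$ gives $\|u\|_{L^{p^*_s}}^{p^*_s}\le C\|u\|_{L^{p^*_s}}^{sp p^*_s/n}[u]_{W^{s,p}}^p$, which rearranges to the desired inequality. (One first works with $u\in C^\infty_c$ so that $\|u\|_{L^{p^*_s}}<\infty$ a priori, then removes this by truncation and Fatou.) This keeps the Poincar\'e core of your plan but replaces the problematic aggregation plus HLS with a single optimization, and it matches the classical elementary proof more closely than the DNPV cube argument.
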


Note that Theorem~\ref{thm-Sobolev} provides the embedding $W^{s, p}(B) \subset L^{p^{\ast}_s}(B)$. Another useful embedding for fractional Sobolev spaces is $W^{s, p}(B) \subset W^{\tilde{s}, \tilde{p}}(B)$, where $\tilde{s} \in (0,s)$ and $\tilde{p} \in (1, p)$, which is obtained from the following lemma and the embedding $L^p(B) \subset L^{\tilde{p}}(B)$.

\begin{lemma}\label{lem-Jensen}
\textup{(Cozzi~\cite[Lemma~4.6]{Coz17})}
Let $\tilde{s} \in (0,s)$, $\tilde{p} \in (1, p)$ and $B=B_r(x_0)$. Then there exists a constant $C=C(n, s, \tilde{s}, p, \tilde{p}) > 0$ such that
\begin{equation*}
[u]_{W^{\tilde{s}, \tilde{p}}(B)} \leq C r^{n\frac{p-\tilde{p}}{p\tilde{p}}+s-\tilde{s}} [u]_{W^{s, p}(B)}
\end{equation*}
for any $u \in W^{s, p}(B)$.
\end{lemma}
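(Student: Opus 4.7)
The inequality is a Hölder-type comparison between two fractional Gagliardo seminorms on the same ball, and the natural approach is to apply Hölder's inequality directly in the double integral defining $[u]_{W^{\tilde{s},\tilde{p}}(B)}$. The first step is to split the singular kernel as
\[
\frac{1}{|x-y|^{n+\tilde{s}\tilde{p}}}
= \frac{1}{|x-y|^{(n+sp)\tilde{p}/p}} \cdot \frac{1}{|x-y|^{n+\tilde{s}\tilde{p}-(n+sp)\tilde{p}/p}},
\]
so that pairing $|u(x)-u(y)|^{\tilde{p}}$ with the first factor and raising to power $p/\tilde{p}$ reproduces exactly the $W^{s,p}$-kernel.

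Next, I would apply Hölder's inequality with conjugate exponents $p/\tilde{p}$ and $p/(p-\tilde{p})$ to $[u]_{W^{\tilde{s},\tilde{p}}(B)}^{\tilde{p}}$. The first factor becomes $[u]_{W^{s,p}(B)}^{\tilde{p}}$, while the second factor reduces to a purely geometric integral of the form
\[
\int_B \int_B |x-y|^{-(n-\beta)} \,\mathrm{d}y\,\mathrm{d}x, \qquad \beta \coloneqq \frac{(s-\tilde{s})\,p\tilde{p}}{p-\tilde{p}},
\]
which is finite precisely because $\tilde{s}<s$ makes $\beta>0$ and hence $n-\beta<n$, keeping the kernel locally integrable near the diagonal.

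The third step is a standard scaling computation: for each $x \in B=B_r(x_0)$, the inner integral is bounded by $\int_{B_{2r}(x)} |x-y|^{-(n-\beta)}\,\mathrm{d}y = C(n,\beta)\,r^{\beta}$ uniformly in $x$, and integration over $x \in B$ contributes an additional factor $r^n$. Raising the resulting $r^{n+\beta}$ to the power $(p-\tilde{p})/p$ and then taking the $\tilde{p}$-th root of the full estimate collapses the exponents to $n(p-\tilde{p})/(p\tilde{p})+(s-\tilde{s})$, which is exactly the one claimed.

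The argument has no substantial obstacle: it is purely an exponent-bookkeeping exercise, and the only thing worth double-checking is that the choice $(n+sp)\tilde{p}/p$ in the splitting is the unique one for which both Hölder factors come out with the prescribed homogeneities. The positivity $\beta>0$ (equivalently $\tilde{s}<s$) is essential and cannot be relaxed, as expected from the strict inclusion $W^{s,p} \hookrightarrow W^{\tilde{s},\tilde{p}}$.
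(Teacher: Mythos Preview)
Your argument is correct: the splitting of the kernel and the application of H\"older's inequality with exponents $p/\tilde{p}$ and $p/(p-\tilde{p})$ on the product measure $\mathrm{d}y\,\mathrm{d}x$ produce exactly the claimed exponent, and the integrability of the geometric factor is guaranteed by $\beta>0$.

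The paper itself does not prove this lemma but cites Cozzi~\cite[Lemma~4.6]{Coz17}; the label \texttt{lem-Jensen} and the proof given for the closely related Lemma~\ref{lem-Holder} indicate that the intended route applies H\"older's inequality only in the outer variable $x$ and then uses Jensen's inequality with respect to the finite measure $\mu(x,\mathrm{d}y)=|x-y|^{-n+(s-\tilde{s})\tilde{p}}\,\mathrm{d}y$ to handle the inner integral. Your approach bypasses Jensen entirely by applying H\"older directly on $B\times B$, which is slightly more elementary and makes the exponent bookkeeping transparent; the Jensen-based route, on the other hand, naturally accommodates the extra weight $u(x)$ appearing in Lemma~\ref{lem-Holder}, which is why the paper prefers it there.
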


We also need the following lemma, which is a generalized version of Lemma~\ref{lem-Jensen}.

\begin{lemma}\label{lem-Holder}
Let $\tilde{s} \in (0,s)$, $\tilde{p} \in (1, p)$, and let $G$ be a bounded open set. Then there exists a constant $C=C(n, s, \tilde{s}, p, \tilde{p}) > 0$ such that
\begin{equation}\label{eq-Holder}
\int_G \int_G u(x) \frac{|v(x)-v(y)|^{\tilde{p}}}{|x-y|^{n+\tilde{s}\tilde{p}}} \,\mathrm{d}y\,\mathrm{d}x \leq C (\mathrm{diam}\,G)^{(s-\tilde{s})\tilde{p}} \|u\|_{L^{p/(p-\tilde{p})}(G)} [v]_{W^{s, p}(G)}^{\tilde{p}}
\end{equation}
for any $u \in L^{p/(p-\tilde{p})}(G)$ and $v \in W^{s, p}(G)$.
\end{lemma}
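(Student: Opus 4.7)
The plan is to derive \eqref{eq-Holder} by a single application of Hölder's inequality, after writing the kernel $|x-y|^{-(n+\tilde{s}\tilde{p})}$ as a product of three factors, each tailored to produce one of the three quantities on the right-hand side: $[v]_{W^{s,p}(G)}^{\tilde p}$, $\|u\|_{L^{p/(p-\tilde p)}(G)}$, and the diameter term.

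Concretely, I would first verify the algebraic identity
\begin{equation*}
\frac{|v(x)-v(y)|^{\tilde{p}}}{|x-y|^{n+\tilde{s}\tilde{p}}}
= \underbrace{\frac{|v(x)-v(y)|^{\tilde{p}}}{|x-y|^{(n+sp)\tilde{p}/p}}}_{\text{feeds }[v]_{W^{s,p}(G)}^{\tilde p}} \cdot \underbrace{\frac{|x-y|^{(s-\tilde{s})\tilde{p}}}{|x-y|^{n(p-\tilde{p})/p}}}_{\text{feeds diameter and }\|u\|},
\end{equation*}
since $(n+sp)\tilde{p}/p + n(p-\tilde{p})/p - (s-\tilde{s})\tilde{p} = n + \tilde{s}\tilde{p}$. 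I would then apply Hölder's inequality to the double integral with conjugate exponents $p/\tilde{p}$ and $p/(p-\tilde{p})$, assigning the first factor above (raised to $p/\tilde{p}$) to the $L^{p/\tilde p}$ side and the function $u(x)$ together with the second factor (raised to $p/(p-\tilde{p})$) to the $L^{p/(p-\tilde p)}$ side. The first piece reassembles exactly $[v]_{W^{s,p}(G)}^{p}$ and, after taking the $\tilde{p}/p$ root, produces $[v]_{W^{s,p}(G)}^{\tilde{p}}$.

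The remaining factor reduces to estimating, for each fixed $x\in G$,
\begin{equation*}
\int_G |x-y|^{\alpha-n}\,\mathrm{d}y, \qquad \alpha \coloneqq \frac{(s-\tilde{s})\tilde{p}\,p}{p-\tilde{p}} > 0.
\end{equation*}
Here the hypothesis $\tilde s < s$ is crucial: it makes $\alpha$ strictly positive, so $|x-y|^{\alpha-n}$ is integrable near the diagonal, and bounding $G\subset B_{\mathrm{diam}\,G}(x)$ gives a bound $C(n,\alpha)(\mathrm{diam}\,G)^\alpha$, independent of $x$. Fubini and integration in $x$ then contribute $\|u\|_{L^{p/(p-\tilde p)}(G)}^{p/(p-\tilde p)}$, and the final $(p-\tilde{p})/p$ power precisely converts $(\mathrm{diam}\,G)^{\alpha(p-\tilde p)/p}$ into the advertised $(\mathrm{diam}\,G)^{(s-\tilde{s})\tilde{p}}$.

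There is no genuine obstacle; the whole task is bookkeeping. The only subtle point is identifying the correct three-way split of the kernel, which is forced once one decides that Hölder's inequality must simultaneously reproduce a fractional Sobolev seminorm in $(s,p)$ and an $L^{p/(p-\tilde p)}$ norm. The strict inequality $\tilde{s}<s$ (together with $\tilde{p}<p$) is what keeps the residual power of $|x-y|$ on the right side of the critical exponent $-n$ and simultaneously furnishes the positive power of $\mathrm{diam}\,G$ in the stated bound.
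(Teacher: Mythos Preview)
Your argument is correct. The kernel split you wrote down checks out algebraically, and once H\"older with exponents $p/\tilde p$ and $p/(p-\tilde p)$ is applied to the double integral, the two pieces assemble exactly into $[v]_{W^{s,p}(G)}^{\tilde p}$ and $\big(\int_G |u(x)|^{p/(p-\tilde p)}\int_G |x-y|^{\alpha-n}\,\mathrm{d}y\,\mathrm{d}x\big)^{(p-\tilde p)/p}$ with $\alpha=(s-\tilde s)\tilde p\,p/(p-\tilde p)>0$. The inner $y$-integral is bounded by $C(n,\alpha)(\mathrm{diam}\,G)^\alpha$ uniformly in $x$, and the exponent bookkeeping you describe is right.

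The paper's proof is organized differently: it first applies H\"older in the $x$-variable only, separating $\|u\|_{L^{p/(p-\tilde p)}(G)}$ from the quantity $\big(\int_G(\int_G |v(x)-v(y)|^{\tilde p}|x-y|^{-n-\tilde s\tilde p}\,\mathrm{d}y)^{p/\tilde p}\,\mathrm{d}x\big)^{\tilde p/p}$, and then handles the inner integral via Jensen's inequality with respect to the finite measure $\mu(x,\mathrm{d}y)=|x-y|^{-n+(s-\tilde s)\tilde p}\,\mathrm{d}y$. Your route is a single H\"older on $G\times G$ with a well-chosen splitting of the kernel; it avoids introducing the auxiliary measure and the Jensen step, so it is a bit more direct. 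The paper's two-step version, on the other hand, makes the role of the measure $\mu$ explicit, which is natural if one views the lemma as a weighted refinement of Lemma~\ref{lem-Jensen}. Either way the crucial input is the same: $\tilde s<s$ forces the residual kernel to be integrable near the diagonal with a bound proportional to a positive power of $\mathrm{diam}\,G$.
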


Note that Lemma~\ref{lem-Holder} is also true when $\tilde{s}=s$ and $\tilde{p}=p$.

\begin{proof}
Let $L$ denote the left-hand side of \eqref{eq-Holder}. It follows from H\"older's inequality that
\begin{equation*}
L \leq \|u\|_{L^{p/(p-\tilde{p})}(G)} \left( \int_G \left( \int_G \frac{|v(x)-v(y)|^{\tilde{p}}}{|x-y|^{n+\tilde{s}\tilde{p}}} \,\mathrm{d}y \right)^{p/\tilde{p}} \mathrm{d}x \right)^{\tilde{p}/p}.
\end{equation*}
Let $\mu(x, \mathrm{d}y) = |x-y|^{-n+(s-\tilde{s})\tilde{p}} \,\mathrm{d}y$ and $d=\mathrm{diam}\,G$. Then
\begin{equation*}
\mu(x, G) \leq \int_{B_{d}(x)} |x-y|^{-n+(s-\tilde{s})\tilde{p}} \,\mathrm{d}y \leq C d^{(s-\tilde{s})\tilde{p}}<\infty
\end{equation*}
for any $x \in G$. Thus, an application of Jensen's inequality shows that
\begin{align*}
L
&\leq \|u\|_{L^{p/(p-\tilde{p})}(G)} \left( \int_G \mu(x, G)^{p/\tilde{p}-1} \int_G \frac{|v(x)-v(y)|^{p}}{|x-y|^{sp}} \mu(x, \mathrm{d}y) \,\mathrm{d}x \right)^{\tilde{p}/p} \\
&\leq C d^{(s-\tilde{s})\tilde{p}}\|u\|_{L^{p/(p-\tilde{p})}(G)} [v]_{W^{s, p}(G)}^{\tilde{p}},
\end{align*}
where $C=C(n, s, \tilde{s}, p, \tilde{p})>0$.
\end{proof}

We close the section with the following inequalities, which appear several times throughout the paper.

\begin{lemma}\label{lem-tail}
Let $S, G \subset \mathbb{R}^n$ be nonempty bounded measurable sets such that $S \Subset \mathrm{int}\, G$, and let $d= \mathrm{dist}(S, G^c)$. Then there exists a constant $C=C(n, s, p)>0$ such that
\begin{equation}\label{eq-tail-x}
\int_S \int_{G^c} \frac{|u(x)|^{p-1}}{|x-y|^{n+sp}} \,\mathrm{d}y \,\mathrm{d}x \leq Cd^{-sp} \|u\|_{L^{p-1}(S)}^{p-1}
\end{equation}
for any $u \in L^{p-1}(S)$. Moreover, if $x_0 \in S$, then
\begin{equation*}
\int_S \int_{G^c} \frac{|u(y)|^{p-1}}{|x-y|^{n+sp}} \,\mathrm{d}y \,\mathrm{d}x \leq \frac{|S|}{d^{sp}} \left( 1+\frac{\mathrm{diam}\,S}{d} \right)^{n+sp} \mathrm{Tail}^{p-1}(u; x_0, d)
\end{equation*}
for any $u \in L^{p-1}_{sp}(\mathbb{R}^n)$.
\end{lemma}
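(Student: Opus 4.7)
The plan is to prove each estimate separately by elementary pointwise bounds on the kernel $|x-y|^{-n-sp}$ followed by Fubini's theorem. The only nontrivial input is the triangle inequality, used to transfer the kernel from being centered at a moving point $x$ to the fixed basepoint $x_0$ in the second estimate.

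For the first inequality, I would exploit that whenever $x \in S$ and $y \in G^c$ one has $|x-y| \geq d$, hence $G^c \subset \mathbb{R}^n \setminus B_d(x)$. The inner integral is then controlled by
\begin{equation*}
\int_{G^c} \frac{\mathrm{d}y}{|x-y|^{n+sp}} \leq \int_{\mathbb{R}^n \setminus B_d(x)} \frac{\mathrm{d}y}{|x-y|^{n+sp}} = C(n,s,p)\, d^{-sp},
\end{equation*}
by polar coordinates. Factoring $|u(x)|^{p-1}$ out of the inner integral and integrating in $x$ delivers the claim.

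For the second inequality, the strategy is to replace the kernel $|x-y|^{-n-sp}$ by $|x_0-y|^{-n-sp}$ so that the $y$-integral becomes a genuine tail at $x_0$. Since $x_0 \in S$, one still has $|x_0-y| \geq d$ for $y \in G^c$, so $G^c \subset \mathbb{R}^n \setminus B_d(x_0)$. The triangle inequality $|x_0-y| \leq |x-y|+|x-x_0| \leq |x-y| + \mathrm{diam}\, S$ combined with $|x-y| \geq d$ yields $|x_0-y|/|x-y| \leq 1 + \mathrm{diam}\, S/d$, and raising to the $(n+sp)$-th power gives the pointwise kernel comparison
\begin{equation*}
\frac{1}{|x-y|^{n+sp}} \leq \left( 1+\frac{\mathrm{diam}\, S}{d} \right)^{n+sp} \frac{1}{|x_0-y|^{n+sp}}.
\end{equation*}
Substituting into the double integral, integrating the constant factor in $x$ over $S$ to produce $|S|$, and recognizing the resulting $y$-integral over $\mathbb{R}^n \setminus B_d(x_0)$ as $d^{-sp}\,\mathrm{Tail}^{p-1}(u;x_0,d)$, one arrives at the stated bound.

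I do not expect any real obstacle: the whole argument is a distance computation plus Fubini. The single point requiring care is applying the triangle inequality in the correct direction, so that the additive term $\mathrm{diam}\, S/d$ (rather than a worse expression) appears under the exponent $n+sp$, and so that the reference tail at $x_0$ — rather than some inconvenient $x$-dependent quantity — is what remains at the end.
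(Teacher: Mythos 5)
Your argument is correct and coincides with the paper's proof: the first bound uses $G^c\subset\mathbb{R}^n\setminus B_d(x)$ and a polar-coordinate estimate of the kernel tail, and the second uses the triangle inequality together with $|x-y|\geq d$ to get $|x_0-y|\leq(1+\mathrm{diam}\,S/d)\,|x-y|$ and thereby replace the moving center $x$ by $x_0$. No gaps.
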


\begin{proof}
Since for any $x \in S$
\begin{equation*}
\int_{G^c} |x-y|^{-n-sp} \,\mathrm{d}y \leq \int_{\mathbb{R}^n \setminus B_d(x)} |x-y|^{-n-sp} \,\mathrm{d}y \leq C d^{-sp},
\end{equation*}
the inequality \eqref{eq-tail-x} follows.

If $x \in S$ and $y \in G^c$, then $|y-x_0| \leq |y-x|+|x-x_0| \leq (1+\frac{\mathrm{diam}\,S}{d})|x-y|$, and hence
\begin{align*}
\int_S \int_{G^c} \frac{|u(y)|^{p-1}}{|x-y|^{n+sp}} \,\mathrm{d}y \,\mathrm{d}x
&\leq \left( 1+\frac{\mathrm{diam}\,S}{d} \right)^{n+sp} \int_S \int_{\mathbb{R}^n \setminus B_d(x_0)} \frac{|u(y)|^{p-1}}{|y-x_0|^{n+sp}} \,\mathrm{d}y \,\mathrm{d}x \\
&= \frac{|S|}{d^{sp}} \left( 1+\frac{\mathrm{diam}\,S}{d} \right)^{n+sp} \mathrm{Tail}^{p-1}(u; x_0, d),
\end{align*}
as desired.
\end{proof}

\subsection{Weak solution}\label{sec-solution}

In this section, we define a weak solution of the equation \eqref{eq-main} and study some properties of weak solutions. For this purpose, we define for measurable functions $u, v: \mathbb{R}^n \to [-\infty, \infty]$ a quantity
\begin{equation*}
\mathcal{E}(u,v)\coloneqq\int_{\mathbb{R}^n} \int_{\mathbb{R}^n} |u(x)-u(y)|^{p-2}(u(x)-u(y))(v(x)-v(y)) k(x, y) \,\mathrm{d}y\,\mathrm{d}x,
\end{equation*}
provided that it is finite.

\begin{definition}\label{def-sol}
A function $u \in W^{s, p}_{\mathrm{loc}}(\Omega) \cap L^{p-1}_{sp}(\mathbb{R}^n)$ is a {\it weak supersolution} (resp.\ \emph{weak subsolution}) of \eqref{eq-main} in $\Omega$ if
\begin{equation}\label{eq-sol}
\mathcal{E}(u, \varphi) + \int_{\Omega}b(x, u(x))\varphi(x)\,\mathrm{d}x \geq 0 ~\left(\text{resp.} \leq 0 \right)
\end{equation}
for all nonnegative $\varphi \in C^\infty_c(\Omega)$. A function $u \in W^{s, p}_{\mathrm{loc}}(\Omega) \cap L^{p-1}_{sp}(\mathbb{R}^n)$ is a {\it weak solution} of \eqref{eq-main} in $\Omega$ if \eqref{eq-sol} holds for all $\varphi \in C^\infty_c(\Omega)$.
\end{definition}

As test functions are often not in $C^\infty_c(\Omega)$, it is desirable to consider a larger class of functions than $C^\infty_c(\Omega)$ in Definition~\ref{def-sol}.

\begin{proposition}\label{prop-test}
A function $u\in W^{s, p}_{\mathrm{loc}}(\Omega) \cap L^{p-1}_{sp}(\mathbb{R}^n)$  is a weak supersolution (resp.\ weak subsolution) of \eqref{eq-main} in $\Omega$ if and only if \eqref{eq-sol} holds for all nonnegative $\varphi \in W^{s, p}_{\mathrm{loc}}(\Omega)$ with $\supp{\varphi}\Subset \Omega$.
\end{proposition}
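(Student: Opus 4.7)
\medskip

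The backward implication is immediate because every (nonnegative) $\varphi\in C^\infty_c(\Omega)$ lies in $W^{s,p}_{\mathrm{loc}}(\Omega)$ with $\supp\varphi\Subset\Omega$. For the forward direction, the plan is to approximate a general admissible $\varphi$ by mollifications $\varphi_\delta\in C^\infty_c(\Omega)$ and pass to the limit in both $\mathcal{E}(u,\varphi_\delta)$ and $\int_\Omega b(x,u)\varphi_\delta$. Fix $K\coloneqq\supp\varphi$ and pick open sets $K\Subset G'\Subset G\Subset\Omega$. I first extend $\varphi$ by zero outside $\Omega$. To see the extension belongs to $W^{s,p}(\mathbb{R}^n)$, split the Gagliardo seminorm into the pieces over $G\times G$, $G\times G^c$ (and its symmetric partner) and $G^c\times G^c$: the first is finite by hypothesis, the last vanishes since $\varphi\equiv0$ on $G^c$, and the cross term is bounded by $\|\varphi\|_{L^p(G)}^p\,d^{-sp}$ with $d=\mathrm{dist}(K,G^c)$ via Lemma~\ref{lem-tail}. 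Then $\varphi_\delta\coloneqq\varphi*\rho_\delta$ with a standard nonnegative mollifier $\rho_\delta$ is smooth, compactly supported in $G'$ for $\delta$ small, nonnegative whenever $\varphi$ is, and satisfies $\varphi_\delta\to\varphi$ in $W^{s,p}(\mathbb{R}^n)$ (and in $L^{p_s^*}$ by Theorem~\ref{thm-Sobolev}).

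For the bilinear form, use that $\varphi$ and $\varphi_\delta$ vanish on $G^c$ and that $k$ is symmetric to write, for any admissible $\psi\in\{\varphi,\varphi_\delta\}$,
\[
\mathcal{E}(u,\psi)=\iint_{G\times G}|u(x)-u(y)|^{p-2}(u(x)-u(y))(\psi(x)-\psi(y))\,k(x,y)\,\mathrm{d}y\,\mathrm{d}x + 2\int_{G}\psi(x)\,T(x)\,\mathrm{d}x,
\]
with $T(x)\coloneqq\int_{G^c}|u(x)-u(y)|^{p-2}(u(x)-u(y))\,k(x,y)\,\mathrm{d}y$. The local piece applied to $\varphi_\delta-\varphi$ is controlled by the ellipticity bound and H\"older's inequality by $\Lambda\,[u]_{W^{s,p}(G)}^{p-1}\,[\varphi_\delta-\varphi]_{W^{s,p}(G)}$, which tends to $0$. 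For the tail piece I only need estimates on $G'\supset\supp(\varphi_\delta-\varphi)$: splitting $|u(x)-u(y)|^{p-1}\lesssim|u(x)|^{p-1}+|u(y)|^{p-1}$ and invoking Lemma~\ref{lem-tail} with $S=G'$ yields $|T(x)|\leq C\bigl(|u(x)|^{p-1}+\mathrm{Tail}(u;x_0,d')^{p-1}\bigr)$ for $x\in G'$ and some $x_0\in G'$, $d'=\mathrm{dist}(G',G^c)$. Since $u\in W^{s,p}(G)\cap L^{p-1}_{sp}(\mathbb{R}^n)$, in particular $u\in L^p(G')$, this gives $T\in L^{p/(p-1)}(G')$, and another H\"older estimate yields $\int_{G'}(\varphi_\delta-\varphi)\,T\to0$ from $\varphi_\delta\to\varphi$ in $L^p$.

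It remains to pass to the limit in the lower-order term. By the structure condition \eqref{eq-str} and the Sobolev embedding $W^{s,p}(G')\hookrightarrow L^{p_s^*}(G')$ (or $L^q(G')$ for all $q<\infty$ in the critical case $sp=n$), a three-factor H\"older estimate with exponents dictated by $b_1,b_2\in L^{n/(sp-\varepsilon)}(\Omega)$, $|u|^{p-1}\in L^{p_s^*/(p-1)}(G')$, and $\varphi_\delta-\varphi\in L^{p_s^*}(G')$ produces
\[
\Bigl|\int_\Omega b(x,u)(\varphi_\delta-\varphi)\,\mathrm{d}x\Bigr|\leq C\bigl(\|b_1\|_{L^{n/(sp-\varepsilon)}(G')}\|u\|^{p-1}_{L^{p_s^*}(G')}+\|b_2\|_{L^{n/(sp-\varepsilon)}(G')}\bigr)\|\varphi_\delta-\varphi\|_{L^{p_s^*}(G')},
\]
and the right-hand side tends to $0$; the exponent balance uses $sp\leq n$ and $\varepsilon>0$ in an elementary check. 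The main technical obstacle lies exactly here, namely in arranging the H\"older exponents so that every factor sits inside a space in which one already has bounds or convergence; once this is set up, the inequality $\mathcal{E}(u,\varphi_\delta)+\int b(x,u)\varphi_\delta\geq 0$ (resp.\ $\leq0$, resp.\ $=0$), valid since $\varphi_\delta\in C^\infty_c(\Omega)$ (and nonnegative when $\varphi$ is), passes to the limit and gives the conclusion.
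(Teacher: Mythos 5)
Your proposal is correct and follows essentially the same approach as the paper: mollify $\varphi$ inside an intermediate set $G$, show convergence of the energy form, and handle the lower-order term by combining the structure condition \eqref{eq-str} with H\"older's inequality and the fractional Sobolev embedding. The one place where you diverge is the bilinear form: the paper invokes a ready-made bound $|\mathcal{E}(u,\varphi_j-\varphi)|\leq C(u)\|\varphi_j-\varphi\|_{W^{s,p}(G)}$ cited from Kim--Lee~\cite{KL23}, whereas you derive the convergence directly by decomposing $\mathcal{E}(u,\psi)$ into a local Gagliardo piece on $G\times G$ plus a tail term $\int_G\psi\, T$ with $T\in L^{p/(p-1)}(G')$, and then using H\"older on each. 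This is a self-contained alternative to the citation and is correct; the two routes are otherwise equivalent. Your treatment of the lower-order term with the three-factor H\"older estimate (using $b_1,b_2\in L^{n/(sp-\varepsilon)}$, $|u|^{p-1}\in L^{p_s^*/(p-1)}$, $\varphi_\delta-\varphi\in L^{p_s^*}$) matches the paper's claim \eqref{eq-b-Holder} in the subcritical case; in the critical case $sp=n$ you gesture at $W^{s,p}\hookrightarrow L^q$ for all finite $q$, which is the right idea, while the paper makes this precise by interpolating through $W^{\tilde s,\tilde p}$ with $\tilde s\tilde p<n$ chosen as in \eqref{eq-sp}. A minor point: you need not extend $\varphi$ to $W^{s,p}(\mathbb{R}^n)$; it suffices to mollify within $G$ as the paper does, since all integrals in the end are localized to $G$ and its complement via the tail estimates of Lemma~\ref{lem-tail}.
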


Note that similar results for weak subsolutions and weak solutions also hold.

\begin{proof}
Let us fix an open set $G$ such that $\supp{\varphi} \Subset G \Subset \Omega$. By mollification, there exist nonnegative functions $\varphi_j \in C^\infty_c(G)$ such that $\varphi_j \to \varphi$ in $W^{s, p}(G)$ as $j \to \infty$. The formula (2.9) in Kim--Lee~\cite{KL23} shows that
\begin{equation}\label{eq-test-E}
    |\mathcal{E}(u, \varphi_j-\varphi)| \leq C(u) \|\varphi_j-\varphi\|_{W^{s, p}(G)}.
\end{equation}
On the other hand, by using \eqref{eq-str} we have
\begin{equation*}
    \left| \int_\Omega b(x, u(x)) (\varphi_j-\varphi)(x) \,\mathrm{d}x \right|
    \leq \int_{G} (b_1|u|^{p-1} + b_2)|\varphi_j-\varphi| \,\mathrm{d}x.
\end{equation*}
We claim that
\begin{equation}\label{eq-b-Holder}
    \int_{G} (b_1|u|^{p-1} + b_2)|w| \,\mathrm{d}x \leq C \left( \|u\|_{W^{s, p}(G)}^{p-1} + 1 \right) \|w\|_{W^{s, p}(G)}
\end{equation}
for any $w \in W^{s, p}(G)$, where $C=C(n, s, p, \varepsilon, \|b_1\|_{L^{n/(sp-\varepsilon)}(G)}, \|b_2\|_{L^{n/(sp-\varepsilon)}(G)}, G)>0$. Indeed, if $sp<n$, then it follows from H\"older's inequality and the fractional Sobolev inequality that
\begin{equation*}
    \int_{G} b_1|u|^{p-1}|w| \,\mathrm{d}x \leq |G|^{\frac{\varepsilon}{n}} \|b_1\|_{L^{\frac{n}{sp-\varepsilon}}(G)} \|u\|_{L^{p^{\ast}_{s}}(G)}^{p-1} \|w\|_{L^{p^{\ast}_s}(G)} \leq C \|u\|_{W^{s, p}(G)}^{p-1} \|w\|_{W^{s, p}(G)}
\end{equation*}
and that
\begin{equation*}
    \int_G b_2|w| \,\mathrm{d}x
    \leq |G|^{1-\frac{sp-\varepsilon}{n}-\frac{1}{p^{\ast}_s}} \|b_2\|_{L^{\frac{n}{sp-\varepsilon}}(G)} \|w\|_{L^{p^{\ast}_s}(G)}
    \leq C \|w\|_{W^{s, p}(G)}.
\end{equation*}
If $sp=n$, then we let
\begin{equation}\label{eq-sp}
\tilde{s} \in \left( s\left(1-\frac{\varepsilon}{2n}\right), s \right) \quad\text{and}\quad \tilde{p}=\frac{n}{\tilde{s}+\frac{\varepsilon}{2p}}
\end{equation}
so that $\tilde{p}<p$ and $\tilde{s}\tilde{p}<n$. Then similar computations as above together with Lemma~\ref{lem-Holder} (applied with $u\coloneqq 1$ and $v\coloneqq u$) and Jensen's inequality show that
\begin{equation*}
\begin{split}
    \int_{G} b_1|u|^{p-1}|w| \,\mathrm{d}x
    &\leq |G|^{\frac{\varepsilon}{2n}} \|b_1\|_{L^{\frac{n}{sp-\varepsilon}}(G)} \|u\|_{L^{\tilde{p}^{\ast}_{\tilde{s}}}(G)}^{p-1} \|w\|_{L^{\tilde{p}^{\ast}_{\tilde{s}}}(G)} \\
    &\leq C \|u\|_{W^{\tilde{s}, \tilde{p}}(G)}^{p-1} \|w\|_{W^{\tilde{s}, \tilde{p}}(G)} \leq C \|u\|_{W^{s, p}(G)}^{p-1} \|w\|_{W^{s, p}(G)}
\end{split}
\end{equation*}
and that
\begin{equation*}
    \int_G b_2|w| \,\mathrm{d}x
    \leq |G|^{\frac{\varepsilon}{n}-\frac{1}{\tilde{p}^{\ast}_{\tilde{s}}}} \|b_2\|_{L^{\frac{n}{sp-\varepsilon}}(G)} \|w\|_{L^{\tilde{p}^{\ast}_{\tilde{s}}}(G)} \leq C \|w\|_{W^{s, p}(G)}.
\end{equation*}
In either case, the claim \eqref{eq-b-Holder} holds.

Therefore, by using \eqref{eq-test-E} and \eqref{eq-b-Holder} applied with $w=\varphi_j-\varphi$, we conclude that
\begin{equation*}
\mathcal{E}(u, \varphi_j) + \int_\Omega b(x, u(x))\varphi_j(x) \,\mathrm{d}x \to \mathcal{E}(u, \varphi) + \int_\Omega b(x, u(x))\varphi(x) \,\mathrm{d}x
\end{equation*}
as $j \to \infty$.
\end{proof}

As a consequence of Proposition~\ref{prop-test}, we obtain the following result.

\begin{corollary}\label{cor-sol}
A function $u$ is a weak solution of \eqref{eq-main} in $\Omega$ if and only if $u$ is both a weak subsolution and a weak supersolution of \eqref{eq-main} in $\Omega$.
\end{corollary}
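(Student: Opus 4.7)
The plan is to prove the two implications of the corollary by directly unwinding the definitions, with the backward direction requiring the enlarged class of test functions guaranteed by Proposition~\ref{prop-test}.

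For the forward direction, I would simply observe that if $u$ is a weak solution, then $\mathcal{E}(u,\varphi)+\int_\Omega b(x,u)\varphi\,\mathrm{d}x=0$ for every $\varphi\in C^\infty_c(\Omega)$; restricting this equality to nonnegative test functions yields both defining inequalities for a weak super- and subsolution.

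For the backward direction, given an arbitrary (sign-changing) $\varphi\in C^\infty_c(\Omega)$, my idea is to decompose $\varphi=\varphi^+-\varphi^-$. Since $\varphi$ is smooth and compactly supported in $\Omega$, the functions $\varphi^\pm=\max(\pm\varphi,0)$ are Lipschitz with $\supp\varphi^\pm\subset\supp\varphi\Subset\Omega$, hence they lie in $W^{s,p}_{\mathrm{loc}}(\Omega)$ with compact support in $\Omega$. Therefore Proposition~\ref{prop-test} applies and the weak supersolution inequality yields $\mathcal{E}(u,\varphi^\pm)+\int_\Omega b(x,u)\varphi^\pm\,\mathrm{d}x\ge 0$, while the weak subsolution inequality yields the reverse inequality. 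Combining these forces
\begin{equation*}
\mathcal{E}(u,\varphi^{\pm})+\int_\Omega b(x,u(x))\varphi^{\pm}(x)\,\mathrm{d}x=0.
\end{equation*}
Using the linearity of $\mathcal{E}(u,\cdot)$ in its second argument and the linearity of the integral, subtracting the two identities gives $\mathcal{E}(u,\varphi)+\int_\Omega b(x,u)\varphi\,\mathrm{d}x=0$, which is precisely the weak solution identity.

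There is no real obstacle here beyond the observation that $\varphi^\pm$ is admissible as a test function, which is exactly the content of Proposition~\ref{prop-test}; without that proposition we could not split the sign-changing $\varphi$ because $\varphi^\pm\notin C^\infty_c(\Omega)$ in general. Thus the corollary is genuinely a consequence of the already-proved extension of the admissible test-function class, and the proof reduces to the decomposition argument above.
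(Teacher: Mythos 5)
Your proof is correct and follows essentially the same route as the paper: decompose $\varphi=\varphi_+-\varphi_-$, note that $\varphi_\pm$ are nonnegative, Lipschitz, compactly supported (hence in $W^{s,p}_{\mathrm{loc}}(\Omega)$ with $\supp\varphi_\pm\Subset\Omega$), invoke Proposition~\ref{prop-test} to test with them, and use the linearity of $\mathcal{E}(u,\cdot)$ and the integral in the test function to recombine. The paper's proof is simply a more compressed statement of the identical argument.
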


\begin{proof}
    Let $\varphi \in C^\infty_c(\Omega)$, then $\varphi=\varphi_+-\varphi_-$, where $\varphi_+\coloneqq\max\{\varphi, 0\}$ and $\varphi_-\coloneqq \max\{-\varphi, 0\}$. Since $\varphi_\pm$ are nonnegative functions such that $\varphi_\pm \in W^{s, p}_{\mathrm{loc}}(\Omega)$ and $\supp{\varphi_{\pm}} \Subset \Omega$, the desired result follows from Proposition~\ref{prop-test}.
\end{proof}

Proposition~\ref{prop-test} is useful especially when we test the equation \eqref{eq-main} by functions of the form $u\eta$, where $\eta$ is a cut-off function. The following lemma shows that such a function is a legal test function.

\begin{lemma}\label{lem-test-prod}
Let $u \in W^{s, p}_{\mathrm{loc}}(\Omega)$ and $\eta \in C^\infty_c(\Omega)$, then $u\eta \in W^{s, p}(\Omega)$ and $\supp{(u\eta)} \Subset \Omega$.
\end{lemma}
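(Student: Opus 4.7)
The support assertion is trivial: $\mathrm{supp}(u\eta)\subseteq \mathrm{supp}(\eta)\Subset \Omega$, so I focus on showing $u\eta\in W^{s,p}(\Omega)$. The plan is to fix an auxiliary open set $G$ with $K\coloneqq \mathrm{supp}(\eta)\Subset G\Subset \Omega$, so that $u\in W^{s,p}(G)$ by the local Sobolev assumption, and in particular $u\in L^p(G)$. The $L^p$-part of the norm is then immediate: $\|u\eta\|_{L^p(\Omega)}=\|u\eta\|_{L^p(K)}\leq \|\eta\|_{L^\infty}\|u\|_{L^p(G)}<\infty$.

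For the Gagliardo seminorm, I would use the product rule identity
\begin{equation*}
u(x)\eta(x)-u(y)\eta(y) = \eta(x)\bigl(u(x)-u(y)\bigr) + u(y)\bigl(\eta(x)-\eta(y)\bigr),
\end{equation*}
combined with the elementary inequality $|a+b|^p\leq 2^{p-1}(|a|^p+|b|^p)$, to dominate $[u\eta]_{W^{s,p}(\Omega)}^p$ by $C(I_1+I_2)$, where $I_1$ has integrand $|\eta(x)|^p|u(x)-u(y)|^p|x-y|^{-n-sp}$ and $I_2$ has integrand $|u(y)|^p|\eta(x)-\eta(y)|^p|x-y|^{-n-sp}$. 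I would then split the $\Omega\times\Omega$ region into $G\times G$, the cross region $G\times(\Omega\setminus G)$ (together with its symmetric counterpart), and $(\Omega\setminus G)\times(\Omega\setminus G)$, handling each piece separately.

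On $G\times G$, the first piece is controlled by $\|\eta\|_{L^\infty}^p[u]_{W^{s,p}(G)}^p<\infty$, and the second by a standard computation using the Lipschitz bound $|\eta(x)-\eta(y)|\leq \min\{C|x-y|,2\|\eta\|_{L^\infty}\}$: splitting according to whether $|x-y|\leq 1$ or $>1$ and noting that $(1-s)p>0$ gives $\int_G|\eta(x)-\eta(y)|^p|x-y|^{-n-sp}\,\mathrm{d}x\leq C(n,s,p,\eta,G)$ uniformly in $y$, so $I_2$ restricted to $G\times G$ is $\leq C\|u\|_{L^p(G)}^p$. On the cross region, $\eta(y)=0$ forces the integrand to reduce to $|u(x)\eta(x)|^p|x-y|^{-n-sp}$, which is supported in $x\in K$; then Lemma~\ref{lem-tail} (or a direct computation using $d\coloneqq \mathrm{dist}(K,G^c)>0$) bounds this by $Cd^{-sp}\|\eta\|_{L^\infty}^p\|u\|_{L^p(K)}^p$. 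The last region contributes zero since $u\eta$ vanishes there.

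The only subtlety worth flagging is that $u$ is only locally in $W^{s,p}(\Omega)$ and need not be in $L^p(\Omega)$ globally, so one cannot simply invoke a general product estimate on $\Omega$; the argument works precisely because $\eta$ truncates any $y$-contribution from $\Omega\setminus G$ and because the singular kernel is tame away from the diagonal, which is what makes the cross-term integrable despite the absence of any integrability of $u$ on $\Omega\setminus G$.
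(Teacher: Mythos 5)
Your argument is correct and follows essentially the same route as the paper's proof: fix an intermediate open set $G$ with $\supp(\eta)\Subset G\Subset \Omega$, split the Gagliardo double integral into $G\times G$, the cross region, and $(\Omega\setminus G)\times(\Omega\setminus G)$ (which vanishes), handle $G\times G$ via the product decomposition together with the Lipschitz bound on $\eta$ and $(1-s)p>0$, and handle the cross term using the positive distance $d=\mathrm{dist}(\supp\eta,G^c)$.

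One presentational caveat is worth noting. You first claim to dominate the full seminorm by $C(I_1+I_2)$ on all of $\Omega\times\Omega$ and only then split the region, but this ordering would not work: on $G\times(\Omega\setminus G)$ the integrand of $I_1$ is $|\eta(x)|^p|u(x)-u(y)|^p|x-y|^{-n-sp}$, and you have no control on $u(y)$ for $y\in\Omega\setminus G$ since $u$ is only locally in $W^{s,p}$; thus $\iint I_1$ over the cross region need not converge. Your actual treatment of the cross region correctly bypasses this by returning to the \emph{unreduced} integrand $|(u\eta)(x)-(u\eta)(y)|^p|x-y|^{-n-sp}$, which collapses to $|u(x)\eta(x)|^p|x-y|^{-n-sp}$ when $\eta(y)=0$. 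So the decomposition into $I_1$ and $I_2$ should be stated only on $G\times G$ (as the paper does), after which your estimates on each piece are exactly right.
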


\begin{proof}
Let $S=\supp{(u\eta)}$, then it is clear that $S \subset \supp{\eta} \Subset \Omega$ and that
\begin{equation*}
\|u\eta\|_{L^p(\Omega)} \leq \|\eta\|_{L^\infty(S)} \|u\|_{L^p(S)} < \infty.
\end{equation*}
In order to prove that $[u\eta]_{W^{s, p}(\Omega)} < \infty$, we fix an open set $G$ such that $S \Subset G \Subset \Omega$. Then
\begin{equation*}
[u\eta]_{W^{s, p}(\Omega)}^p = \int_G \int_G \frac{|(u\eta)(x)-(u\eta)(y)|^p}{|x-y|^{n+sp}} \,\mathrm{d}y \,\mathrm{d}x + 2 \int_G \int_{\Omega \setminus G} \frac{|(u\eta)(x)|^p}{|x-y|^{n+sp}} \,\mathrm{d}y \,\mathrm{d}x \eqqcolon I_1 + I_2.
\end{equation*}
We see that
\begin{align*}
I_1
&\leq C \int_G \int_G \frac{|u(x)-u(y)|^p}{|x-y|^{n+sp}} |\eta(y)|^p \,\mathrm{d}y \,\mathrm{d}x + C \int_G |u(x)|^p \int_G \frac{|\eta(x)-\eta(y)|^p}{|x-y|^{n+sp}} \,\mathrm{d}y \,\mathrm{d}x \\
&\leq C \|\eta\|_{L^\infty(G)}^p [u]_{W^{s, p}(G)}^p + C(\mathrm{diam}\,G)^{p(1-s)} \|\nabla \eta\|_{L^\infty(G)}^p \|u\|_{L^p(G)}^p
\end{align*}
and that
\begin{equation*}
I_2 \leq 2\|\eta\|_{L^\infty(S)}^p \int_S |u(x)|^p \int_{\mathbb{R}^n \setminus B_d(x)} \frac{\mathrm{d}y}{|x-y|^{n+sp}} \,\mathrm{d}x \leq \frac{C}{d^{sp}} \|\eta\|_{L^\infty(S)}^p \|u\|_{L^p(S)}^p
\end{equation*}
for some $C=C(n, s, p) > 0$, where $d=\mathrm{dist}(S, G^c)$. Since $u \in W^{s ,p}_{\mathrm{loc}}(\Omega)$, we conclude that $u\eta \in W^{s, p}(\Omega)$.
\end{proof}

Since we are interested in weak solutions with an isolated singularity, we need to deal with functions in $W^{s, p}_{\mathrm{loc}}(\Omega \setminus \{x_0\})$ for some $x_0 \in \Omega$. The following lemma shows that one can regularize such functions by truncating them when they have large values near the singularity.

\begin{lemma}\label{lem-test-trunc}
Let $x_0 \in \Omega$ and let $u \in W^{s, p}_{\mathrm{loc}}(\Omega \setminus \{x_0\})$ be such that $u \geq 1$ in a neighborhood of $x_0$. Then $\min\{u, 1\} \in W^{s, p}_{\mathrm{loc}}(\Omega)$.
\end{lemma}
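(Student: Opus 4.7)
The plan is to show $v \coloneqq \min\{u,1\} \in W^{s,p}(G)$ for every open $G \Subset \Omega$. First I would fix $r>0$ small enough that $\overline{B_{2r}(x_0)} \subset \Omega$ and $u \geq 1$ on $B_{2r}(x_0)$; then $v \equiv 1$ on $B_{2r}(x_0)$, which already takes care of $v$ near the singular point. Given such a $G$, I would decompose it disjointly as $G = G_a \cup G_b$ with $G_a \coloneqq G \cap B_r(x_0)$ and $G_b \coloneqq G \setminus B_r(x_0)$.

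Next I would observe that $\overline{G_b}$ lies at distance at least $r$ from $x_0$ and inside $\overline{G} \subset \Omega$, so it is compactly contained in $\Omega \setminus \{x_0\}$. Choosing an open $G_b'$ with $\overline{G_b} \subset G_b' \Subset \Omega \setminus \{x_0\}$, the hypothesis gives $u \in W^{s,p}(G_b')$; since the truncation $t \mapsto \min\{t,1\}$ is $1$-Lipschitz, the same holds for $v$, which controls both $\|v\|_{L^p(G_b)}$ and $[v]_{W^{s,p}(G_b)}$.

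The remaining task is the Gagliardo seminorm over $G\times G$, which I would split into the four pieces determined by $G_a$ and $G_b$. The $G_a \times G_a$ piece vanishes because $v \equiv 1$ on $G_a$, and the $G_b \times G_b$ piece is finite by the previous step. For the symmetric cross pieces, the key observation is that $v(y) = 1$ whenever $y \in B_{2r}(x_0)$, so the integrand is zero unless $y \in G_b \setminus B_{2r}(x_0)$; for such $y$ and any $x \in G_a \subset B_r(x_0)$, the triangle inequality forces $|x-y| \geq 2r - r = r$. This yields
\begin{equation*}
\int_{G_a} \int_{G_b} \frac{|v(x)-v(y)|^p}{|x-y|^{n+sp}} \,\mathrm{d}y \,\mathrm{d}x \leq C r^{-n-sp} |G_a| \left( |G_b| + \|v\|_{L^p(G_b)}^p \right) < \infty,
\end{equation*}
which combined with $\|v\|_{L^p(G)}^p \leq |G_a| + \|v\|_{L^p(G_b)}^p < \infty$ gives $v \in W^{s,p}(G)$.

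The only subtle point is this cross-piece estimate: a priori, points $x \in G_a$ can approach $\partial B_r(x_0)$ and hence come arbitrarily close to points of $G_b$, so the singular kernel is not uniformly bounded. The resolution, and really the content of the lemma, is that the assumption $u \geq 1$ on the \emph{larger} ball $B_{2r}(x_0)$ forces the numerator to vanish on the dangerous short-range region, leaving only the well-separated contributions where the kernel is tame.
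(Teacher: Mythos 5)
Your proof is correct and follows essentially the same strategy as the paper's: truncate a small ball around $x_0$ on which $v\equiv 1$, use the $1$-Lipschitz property of $t\mapsto\min\{t,1\}$ together with $u\in W^{s,p}_{\mathrm{loc}}(\Omega\setminus\{x_0\})$ away from $x_0$, and handle the cross terms by noting that $u\geq 1$ on a \emph{larger} ball forces the integrand to vanish in the short-range region, leaving a kernel bounded away from its singularity. The only difference is cosmetic: the paper splits at radii $r/2$ and $r$ where you use $r$ and $2r$, and the paper integrates the kernel in $y$ to get the sharper factor $r^{-sp}$ where you settle for the cruder (but equally sufficient) bound $r^{-n-sp}|G_a||G_b|$.
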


\begin{proof}
It is enough to prove that $\varphi\coloneqq\min\{u, 1\} \in W^{s, p}(G)$ for each open set $G \Subset \Omega$ containing $x_0$. Take $r>0$ sufficiently small so that $B_r=B_r(x_0) \Subset G$ and $u \geq 1$ on $B_r$. Then clearly $\|\varphi\|_{L^p(G)}^p \leq \|u\|_{L^p(G\setminus B_r)}^p + |B_r|<\infty$. Moreover, it follows from $|\varphi(x)-\varphi(y)|\leq |u(x)-u(y)|$ that
\begin{equation*}
\begin{split}
[\varphi]_{W^{s, p}(G)}^p
&\leq [\varphi]_{W^{s, p}(G \setminus B_{r/2})}^p + 2\int_{G \setminus B_{r/2}} \int_{B_{r/2}} \frac{|\varphi(x)-\varphi(y)|^p}{|x-y|^{n+sp}} \,\mathrm{d}y\,\mathrm{d}x \\
&\leq [u]_{W^{s, p}(G \setminus B_{r/2})}^p + 2\int_{G \setminus B_r} \int_{B_{r/2}} \frac{|\varphi(x)-1|^p}{|x-y|^{n+sp}} \,\mathrm{d}y\,\mathrm{d}x \\
&\leq [u]_{W^{s, p}(G \setminus B_{r/2})}^p + Cr^{-sp} \int_{G \setminus B_r} (|u(x)|^p+1) \,\mathrm{d}x < \infty,
\end{split}
\end{equation*}
which proves that $\varphi \in W^{s, p}(G)$.
\end{proof}

Instead of regularizing functions in $W^{s, p}_{\mathrm{loc}}(\Omega \setminus \{x_0\})$, one can use a function that is constant near the singularity.

\begin{lemma}\label{lem-test-const}
Let $x_0 \in \Omega$ and $u \in W^{s, p}_{\mathrm{loc}}(\Omega \setminus \{x_0\}) \cap L^{p-1}_{sp}(\mathbb{R}^n)$. If $\varphi \in W^{s, p}_{\mathrm{loc}}(\Omega \setminus \{x_0\}) \cap L^\infty(\Omega)$ is such that $\mathrm{supp}\,{\varphi} \Subset \Omega$ and $\varphi=1$ in a neighborhood of $x_0$, then $\mathcal{E}(u, \varphi)$ is finite.
\end{lemma}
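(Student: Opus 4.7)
The plan is to exploit that $\varphi \equiv 1$ in a neighborhood of $x_0$, which kills the factor $\varphi(x) - \varphi(y)$ on a full neighborhood of the diagonal point $(x_0, x_0)$ and thereby shields the double integral from the possible singularity of $u$ at $x_0$. Concretely, fix $r_0 > 0$ with $\varphi \equiv 1$ on $B' \coloneqq B_{r_0}(x_0)$, set $B \coloneqq B_{r_0/2}(x_0)$, and choose an open $G$ such that $\supp \varphi \Subset G \Subset \Omega$ and $B' \Subset G$. I then partition $\mathbb{R}^n$ into $R_1 = B$, $R_2 = B' \setminus B$, $R_3 = G \setminus B'$, $R_4 = \mathbb{R}^n \setminus G$ and expand $\mathcal{E}(u,\varphi)$ as the sum of the sixteen integrals $\iint_{R_i \times R_j} F$, where $F$ denotes the integrand.

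Every $R_i \times R_j$ with $i, j \in \{1, 2\}$ contributes zero since $\varphi \equiv 1$ on $B' = R_1 \cup R_2$, and $R_4 \times R_4$ contributes zero since $\varphi \equiv 0$ on $R_4 \subset \mathbb{R}^n \setminus \supp \varphi$. After using the symmetry of $F$, the remaining six integrals split into three families. \emph{Type A (separation from the singularity)}: on $R_1 \times R_3$ and $R_1 \times R_4$ one has $|x - y| \geq r_0/2$, so the kernel is harmless; decomposing $|u(x)-u(y)|^{p-1} \lesssim |u(x)|^{p-1} + |u(y)|^{p-1}$, the local piece is bounded by a multiple of $\int_{R_1 \cup R_3} |u|^{p-1}$ and the far piece is estimated by Lemma~\ref{lem-tail}. \emph{Type B (both arguments away from $x_0$)}: on $R_2 \times R_3$ and $R_3 \times R_3$, both arguments lie in $G \setminus \overline{B} \Subset \Omega \setminus \{x_0\}$, so $u, \varphi \in W^{s,p}(G \setminus \overline{B})$ and H\"older's inequality with exponents $p/(p-1)$ and $p$ yields a bound of the form $C\Lambda [u]_{W^{s,p}(G \setminus \overline{B})}^{p-1}\,[\varphi]_{W^{s,p}(G \setminus \overline{B})}$. \emph{Type C (one argument in the far tail)}: on $R_2 \times R_4$ and $R_3 \times R_4$ the integrand vanishes unless $x \in \supp \varphi$, so $|x - y| \geq \mathrm{dist}(\supp \varphi \cup \overline{B'}, \mathbb{R}^n \setminus G) > 0$ on the support of the integrand, and Type~A's argument applies with this separation in place of $r_0/2$.

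The main obstacle is making sense of $\int_{R_1 \cup R_3} |u|^{p-1}\,\mathrm{d}x$, since $u$ may blow up at $x_0 \in R_1$ and is only assumed to be locally $W^{s, p}$ away from $x_0$. This is resolved by the elementary embedding $L^{p-1}_{sp}(\mathbb{R}^n) \hookrightarrow L^{p-1}_{\mathrm{loc}}(\mathbb{R}^n)$: on any bounded set $K$, the weight $(1+|y|)^{-n-sp}$ is bounded below by a positive constant, so $\int_K |u|^{p-1}\,\mathrm{d}y \leq C_K \int_{\mathbb{R}^n} |u(y)|^{p-1}(1+|y|)^{-n-sp}\,\mathrm{d}y < \infty$. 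Combined with the vanishing of $\varphi(x) - \varphi(y)$ on $B' \times B'$, this handles the region near the singularity, and summing the finite contributions from all pieces yields the absolute convergence of $\mathcal{E}(u, \varphi)$.
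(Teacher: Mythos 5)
Your proof is correct and follows essentially the same strategy as the paper's: both decompose $\mathcal{E}(u,\varphi)$ by separating a small ball around $x_0$ where $\varphi\equiv 1$ (killing the $\varphi$-difference), an intermediate region compactly contained in $\Omega\setminus\{x_0\}$ where H\"older's inequality applies to the $W^{s,p}$ seminorms of $u$ and $\varphi$, and a far region controlled by Lemma~\ref{lem-tail}. Your finer $4\times 4$ partition and your explicit remark that $L^{p-1}_{sp}(\mathbb{R}^n)\hookrightarrow L^{p-1}_{\mathrm{loc}}(\mathbb{R}^n)$ spell out steps the paper leaves implicit, but the underlying argument is the same.
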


\begin{proof}
Let $S=\supp\varphi$ and take $r>0$ sufficiently small so that $\varphi=1$ on $B_r=B_r(x_0)$ and that $B_r \Subset \mathrm{int}\,S$. We fix an open set $G$ such that $S \Subset G \Subset \Omega$. Then
\begin{align*}
|\mathcal{E}(u, \varphi)|
&\leq \Lambda \iint_{(G^c \times G^c)^c \setminus (B_{r/2} \times B_{r/2})} \frac{|u(x)-u(y)|^{p-1}|\varphi(x)-\varphi(y)|}{|x-y|^{n+sp}} \,\mathrm{d}y \,\mathrm{d}x \\
&= \Lambda \int_{G \setminus B_{r/2}} \int_{G \setminus B_{r/2}} \frac{|u(x)-u(y)|^{p-1}|\varphi(x)-\varphi(y)|}{|x-y|^{n+sp}} \,\mathrm{d}y \,\mathrm{d}x \\
&\quad + 2\Lambda \int_{B_{r/2}} \int_{\mathbb{R}^n \setminus B_{r/2}} \frac{|u(x)-u(y)|^{p-1}|1-\varphi(y)|}{|x-y|^{n+sp}} \,\mathrm{d}y \,\mathrm{d}x \\
&\quad + 2\Lambda \int_{G \setminus B_{r/2}} \int_{G^c} \frac{|u(x)-u(y)|^{p-1}|\varphi(x)|}{|x-y|^{n+sp}} \,\mathrm{d}y \,\mathrm{d}x \eqqcolon I_1 + I_2 + I_3,
\end{align*}
where $\Lambda$ is the ellipticity constant given in \eqref{eq-ellipticity}.

Since $G \setminus B_{r/2} \Subset \Omega \setminus \{x_0\}$, it follows that $u \in W^{s, p}(G\setminus B_{r/2})$. Thus, H\"older's inequality shows that
\begin{equation*}
I_1 \leq \Lambda [u]_{W^{s, p}(G \setminus B_{r/2})}^{p-1} [\varphi]_{W^{s, p}(G \setminus B_{r/2})} < \infty.
\end{equation*}
For $I_2$ and $I_3$, we use Lemma~\ref{lem-tail} to see that
\begin{align*}
I_2
&\leq C \|1-\varphi\|_{L^\infty(\mathbb{R}^n)} \int_{B_{r/2}} \int_{\mathbb{R}^n \setminus B_r} \frac{|u(x)|^{p-1} + |u(y)|^{p-1}}{|x-y|^{n+sp}} \,\mathrm{d}y \,\mathrm{d}x \\
&\leq \frac{C}{r^{sp}} \left( 1+\|\varphi\|_{L^\infty(S)} \right) \left( \|u\|_{L^{p-1}(B_{r/2})}^{p-1} + r^n \mathrm{Tail}^{p-1}(u; x_0, r/2) \right)
\end{align*}
and that
\begin{align*}
I_3
&\leq C \|\varphi\|_{L^\infty(S)} \int_S \int_{\mathbb{R}^n \setminus B_d(x_0)} \frac{|u(x)|^{p-1} + |u(y)|^{p-1}}{|x-y|^{n+sp}} \,\mathrm{d}y \,\mathrm{d}x \\
&\leq \frac{C}{d^{sp}} \|\varphi\|_{L^\infty(S)} \left( \|u\|_{L^{p-1}(S)}^{p-1} + |S| \left( 1+\frac{\mathrm{diam}\,S}{d} \right)^{n+sp} \mathrm{Tail}^{p-1}(u; x_0, d) \right),
\end{align*}
where $d=\mathrm{dist}(S, G^c)$. Since $u \in L^{p-1}_{sp}(\mathbb{R}^n)$, both $I_2$ and $I_3$ are finite.
\end{proof}

\subsection{\texorpdfstring{$(s, p)$}{(s, p)}-capacity}\label{sec-capacity}

In this section, we recall the definition of the $(s, p)$-capacity and collect some basic properties of it. Let us begin with the definition.

\begin{definition}
	The \emph{$(s, p)$-capacity} of a compact set $K \subset \Omega$ is defined by
	\begin{equation*}
		\mathrm{cap}_{s, p}(K, \Omega) = \inf_{u \in W(K, \Omega)} {[u]_{W^{s, p}(\mathbb{R}^n)}^p},
	\end{equation*}
	where $W(K, \Omega) = \{u \in C^\infty_c(\Omega): u \geq 1~\text{on}~K \}$. For open sets $G \subset \Omega$
 \begin{equation}   \label{eq-cap-G}
		\mathrm{cap}_{s, p}(G,\Omega) \coloneqq \sup_{\substack{K \text{ compact}\\ K \subset G}} \mathrm{cap}_{s, p}(K,\Omega),
\end{equation}
and for arbitrary sets $E \subset \Omega$,
\begin{equation}  \label{eq-cap-E}
		\mathrm{cap}_{s, p}(E,\Omega) \coloneqq \inf_{\substack{G \text{ open}\\ E \subset G \subset \Omega}} \mathrm{cap}_{s, p}(G,\Omega).
\end{equation}
\end{definition}

As pointed out in Bj\"orn--Bj\"orn--Kim~\cite[Section~5]{BBK24}, \eqref{eq-cap-G} and \eqref{eq-cap-E} do not change the definition of capacities for compact sets.

The following estimates for the $(s, p)$-capacity of balls and annulus will be useful in the sequel.

\begin{lemma}\label{lem-cap-ball}
	Let $0<r \leq R/2$. Then
	\begin{equation*}
		\mathrm{cap}_{s, p}(\overline{B}_r \setminus B_{r/2}, B_R) \eqsim \mathrm{cap}_{s,p}(\overline{B}_r, B_R) \eqsim 
		\begin{cases}
			\left(r^{-\frac{n-sp}{p-1}}-R^{-\frac{n-sp}{p-1}} \right)^{1-p} &\text{if } sp<n, \\
			(\log(R/r))^{1-p}  &\text{if } sp=n,
		\end{cases}
	\end{equation*}
	where the comparable constants depend only on $n$, $s$ and $p$.
\end{lemma}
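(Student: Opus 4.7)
The plan is to prove the two chains of equivalences by establishing three ingredients:
(i) the upper bound $\mathrm{cap}_{s,p}(\overline{B}_r, B_R) \lesssim F(r, R)$;
(ii) the lower bound $\mathrm{cap}_{s,p}(\overline{B}_r \setminus B_{r/2}, B_R) \gtrsim F(r, R)$;
(iii) the trivial monotonicity $\mathrm{cap}_{s,p}(\overline{B}_r \setminus B_{r/2}, B_R) \leq \mathrm{cap}_{s,p}(\overline{B}_r, B_R)$,
where $F(r, R)$ denotes the right-hand side of the claimed estimate. A change of variables $u \mapsto u(\lambda \cdot)$ in the Gagliardo seminorm yields the scaling identity $\mathrm{cap}_{s,p}(\lambda K, \lambda \Omega) = \lambda^{n-sp}\mathrm{cap}_{s,p}(K, \Omega)$, and one checks that $F(r, R) = r^{n-sp} F(1, R/r)$ when $sp<n$, while $F$ is already homogeneous of degree $0$ when $sp=n$. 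This reduces the problem to the normalized situation $r = 1$, $\rho := R/r \geq 2$.

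For (i), I would use the explicit radial Lipschitz function modeled on the truncated fundamental solution $\Gamma_{s,p}$:
\[
\phi(x) = \min\!\left\{ 1,\, \max\!\left\{0,\, \frac{G(|x|) - G(\rho)}{G(1) - G(\rho)} \right\} \right\}, \qquad G(t) = \begin{cases} t^{-(n-sp)/(p-1)} & \text{if } sp < n, \\ -\log t & \text{if } sp = n. \end{cases}
\]
A standard mollification produces admissible $C^\infty_c(B_\rho)$ approximations with essentially the same energy. I would decompose the double integral for $[\phi]_{W^{s,p}(\mathbb{R}^n)}^p$ along the dyadic annular shells $A_k = B_{2^{k+1}} \setminus B_{2^k}$: for $x, y$ in the same shell use the pointwise bound $|\phi(x) - \phi(y)| \lesssim |G'(2^k)|\,|x - y|$ with $|G'(2^k)|^p$ controlling the inner integral; for $x, y$ in distinct shells (or with one inside $B_1$) use $|\phi| \leq 1$ together with the separation in $|x-y|$. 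Summing the contributions over the $\lesssim \log_2 \rho$ relevant scales, together with the explicit form of $G$, produces $[\phi]_{W^{s,p}(\mathbb{R}^n)}^p \lesssim F(1, \rho)$.

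For (ii), take $u \in C^\infty_c(B_\rho)$ admissible, with $u \geq 1$ on $\overline{B}_1 \setminus B_{1/2}$, and assume $0 \leq u \leq 1$ after truncation (this does not increase the Gagliardo seminorm). Let $K = \lfloor \log_2 \rho \rfloor$ and consider the dyadic shell averages $(u)_k = \fint_{A_k} u$. Since $(u)_0 \gtrsim 1$ and $(u)_K \lesssim \epsilon$ (as $u$ must essentially vanish on the outermost shell), telescoping gives $\sum_{k=0}^{K-1} |(u)_{k+1} - (u)_k| \gtrsim 1$. Applying the fractional Poincar\'e inequality on a slightly enlarged shell $\tilde A_k$ containing both $A_k$ and $A_{k+1}$ yields $|(u)_{k+1} - (u)_k|^p \lesssim (2^k)^{sp - n} [u]_{W^{s,p}(\tilde A_k)}^p$. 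H\"older's inequality in $k$ with exponents $p/(p-1)$ and $p$ then gives
\[
1 \lesssim \Bigl(\sum_{k=0}^{K-1} 2^{k(sp-n)/(p-1)}\Bigr)^{(p-1)/p} \Bigl(\sum_{k=0}^{K-1} [u]_{W^{s,p}(\tilde A_k)}^p\Bigr)^{1/p} \lesssim \Bigl(\sum_{k=0}^{K-1} 2^{k(sp-n)/(p-1)}\Bigr)^{(p-1)/p} [u]_{W^{s,p}(\mathbb{R}^n)},
\]
where the bounded-overlap property of the shells $\tilde A_k$ controls the second sum. Evaluating the geometric/harmonic series on the right gives $\approx 1 - \rho^{-(n-sp)/(p-1)}$ when $sp<n$ and $\approx \log \rho$ when $sp = n$, and rearranging produces exactly $[u]_{W^{s,p}(\mathbb{R}^n)}^p \gtrsim F(1, \rho)$.

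The main obstacle is (ii): the dyadic chaining must be tuned so that the fractional Poincar\'e inequality is applied with its scale-correct constant on each shell, and the overlaps of the enlarged shells $\tilde A_k$ must be controlled dimensionally. Both the exponent $1-p$ and the precise dependence $(1-\rho^{-(n-sp)/(p-1)})^{1-p}$ versus $(\log \rho)^{1-p}$ emerge only if the H\"older step in (ii) is paired with the matching choice of $G$ in (i)—this is exactly where the fundamental solution $\Gamma_{s,p}$ enters the proof implicitly as the extremizer on both sides.
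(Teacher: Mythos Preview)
Your proposal is correct in outline but takes a genuinely different route from the paper. The paper does not prove the ball estimate $\mathrm{cap}_{s,p}(\overline{B}_r, B_R) \eqsim F(r,R)$ at all; it simply cites Kim--Lee--Lee~\cite[Lemma~2.17]{KLL23} for this. For the remaining inequality $\mathrm{cap}_{s,p}(\overline{B}_r \setminus B_{r/2}, B_R) \gtrsim \mathrm{cap}_{s,p}(\overline{B}_r, B_R)$, the paper observes that the annulus contains a ball $B = B_{r/4}(y_0)$ and applies the already-cited ball estimate to $B$, yielding the bound in one line. By contrast, you reprove the ball capacity estimate from scratch: your step~(i) constructs the near-extremal test function from the truncated fundamental solution, and your step~(ii) runs a dyadic Poincar\'e chaining argument to get the lower bound directly for the annulus. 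Your approach is self-contained and exposes why $\Gamma_{s,p}$ governs the asymptotics, whereas the paper's argument is much shorter but outsources the substantive content to the reference. One small indexing point in your step~(ii): with $A_k = B_{2^{k+1}} \setminus B_{2^k}$ the annulus $\overline{B}_1 \setminus B_{1/2}$ is $A_{-1}$, not $A_0$, so the telescoping sum should start at $k=-1$ (or the shells re-indexed); similarly, to guarantee $(u)_K \approx 0$ you should take $K$ one step larger so that $A_K$ lies entirely outside $B_\rho$. These are cosmetic fixes and do not affect the argument.
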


\begin{proof}
The second comparability is contained in Kim--Lee--Lee~\cite[Lemma~2.17]{KLL23}, so it is enough to prove the first comparability. Since the capacity is monotone increasing with respect to the argument, it suffices to prove that
	\begin{equation*}
		\mathrm{cap}_{s, p}(\overline{B}_r \setminus B_{r/2}, B_R) \gtrsim \mathrm{cap}_{s, p}(\overline{B}_r, B_R).
	\end{equation*}
	Indeed, since there exists a ball $B=B_{r/4}(y_0)$ contained in the annulus $\overline{B}_r \setminus B_{r/2}$, we have
	\begin{equation*}
		\mathrm{cap}_{s, p}(\overline{B}_r \setminus B_{r/2}, B_R) \geq \mathrm{cap}_{s, p}(B, B_R) \gtrsim \mathrm{cap}_{s, p}(\overline{B}_r, B_R).
	\end{equation*}
\end{proof}

A set of $(s, p)$-capacity zero plays a key role in the main theorems. Let us provide a precise definition of it.

\begin{definition}
    A set $E \subset \mathbb{R}^n$ is of \emph{$(s, p)$-capacity zero} if $\mathrm{cap}_{s, p}(E \cap \Omega, \Omega)=0$ for all open $\Omega \subset \mathbb{R}^n$.
\end{definition}

A set of $(s, p)$-capacity zero in particular has measure zero.

\begin{lemma}\label{lem-measurezero}
    If $E \subset \mathbb{R}^n$ is of $(s, p)$-capacity zero, then $|E|=0$.
\end{lemma}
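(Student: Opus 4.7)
My plan is to combine the capacity zero hypothesis with the fractional Sobolev inequality. Since Lebesgue outer measure is countably subadditive and $E = \bigcup_{R \in \mathbb{N}} (E \cap B_R)$, it will suffice to show $|E \cap B_R| = 0$ for each fixed $R > 0$.

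For such an $R$, I will apply the hypothesis with $\Omega = B_{2R}$ to get $\mathrm{cap}_{s,p}(E \cap B_{2R}, B_{2R}) = 0$. By \eqref{eq-cap-E}, for every $\varepsilon > 0$ there is an open set $G$ with $E \cap B_{2R} \subset G \subset B_{2R}$ and $\mathrm{cap}_{s,p}(G, B_{2R}) < \varepsilon$; by \eqref{eq-cap-G}, every compact $K \subset G$ then satisfies $\mathrm{cap}_{s,p}(K, B_{2R}) < \varepsilon$, so I can find $u \in C^\infty_c(B_{2R})$ with $u \geq 1$ on $K$ and $[u]_{W^{s,p}(\mathbb{R}^n)}^p < \varepsilon$.

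The key step is to convert the smallness of $[u]_{W^{s,p}(\mathbb{R}^n)}$ into smallness of $|K|$ via the fractional Sobolev embedding, split into two subcases. If $sp < n$, Theorem~\ref{thm-Sobolev} applied to the compactly supported $u$ gives
\[
|K|^{1/p^{\ast}_s} \leq \|u\|_{L^{p^{\ast}_s}(\mathbb{R}^n)} \leq C [u]_{W^{s,p}(\mathbb{R}^n)} < C \varepsilon^{1/p},
\]
hence $|K| \leq C \varepsilon^{p^{\ast}_s/p}$. If $sp = n$, I would choose $\tilde s < s$ and $\tilde p < p$ with $\tilde s \tilde p < n$ (as in \eqref{eq-sp}) and estimate $[u]_{W^{\tilde s, \tilde p}(\mathbb{R}^n)}^{\tilde p}$ by splitting the integral over $B_{4R} \times B_{4R}$ and its complement: the interior piece is handled by Lemma~\ref{lem-Jensen}, and the tail piece is bounded by a multiple of $\|u\|_{L^{\tilde p}(B_{2R})}^{\tilde p}$, which I would in turn control by an elementary Poincar\'e-type bound $\|u\|_{L^p(B_{2R})}^p \leq C R^{sp} [u]_{W^{s,p}(\mathbb{R}^n)}^p$ for functions vanishing outside $B_{2R}$ (obtained by comparing $|u(x)|^p$ with the average of $|u(x)-u(y)|^p$ for $y \in B_{4R} \setminus B_{2R}$). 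Theorem~\ref{thm-Sobolev} with the subcritical pair $(\tilde s, \tilde p)$ then yields $|K| \leq C_R \varepsilon^{\alpha}$ for some $\alpha > 0$.

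Finally, inner regularity of Lebesgue measure on the open set $G$ gives $|G| = \sup\{|K| : K \subset G \text{ compact}\} \leq C_R \varepsilon^\alpha$, and since $|E \cap B_R| \leq |G|$ with $\varepsilon > 0$ arbitrary, we conclude $|E \cap B_R| = 0$. I expect the main obstacle to be the critical case $sp = n$, where Theorem~\ref{thm-Sobolev} is unavailable at the natural exponent and one must detour through a subcritical pair $(\tilde s, \tilde p)$, carefully handling the portion of the $W^{\tilde s, \tilde p}(\mathbb{R}^n)$-seminorm that arises from outside $\mathrm{supp}\,u$.
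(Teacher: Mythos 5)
Your proposal is correct, but it follows a different route than the paper's. Where you invoke the fractional Sobolev embedding of Theorem~\ref{thm-Sobolev} to bound $|K|^{1/p^{\ast}_s}$ by $\|u\|_{L^{p^{\ast}_s}}$, and consequently must split into the cases $sp<n$ and $sp=n$ (detouring through a subcritical pair $(\tilde s,\tilde p)$ in the latter and re-deriving a Poincar\'e-type estimate for compactly supported functions by hand), the paper instead cites the fractional Poincar\'e inequality from Bj\"orn--Bj\"orn--Kim~\cite[(5.6)]{BBK24}, which for admissible $\varphi\in W(K,\Omega)$ gives $|K|\leq\int_\Omega|\varphi|^p\,\mathrm{d}x\leq C[\varphi]_{W^{s,p}(\mathbb{R}^n)}^p$ directly, uniformly in $s$ and $p$ with no case distinction. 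The ingredients you identify (capacity approximation from outside, inner regularity of Lebesgue measure, the comparison $|u(x)|^p\lesssim R^{sp}\fint_{B_{4R}\setminus B_{2R}}|u(x)-u(y)|^p|x-y|^{-n-sp}\,\mathrm{d}y$ when $u$ vanishes on $B_{4R}\setminus B_{2R}$) are all sound, and the tail estimate for $[u]_{W^{\tilde s,\tilde p}(\mathbb{R}^n)}$ combined with Lemma~\ref{lem-Jensen} does close the critical case; so your argument works, at the cost of being more involved than the single appeal to Poincar\'e and of not covering the (vacuous) supercritical regime $sp>n$ that the paper's statement formally admits. In effect your Poincar\'e-type bound for compactly supported $u$ is exactly the estimate the paper delegates to \cite{BBK24}, so the two proofs meet in the middle; the paper's version is just the shorter path.
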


\begin{proof}
Let $\Omega$ be a bounded open set and let $\varepsilon >0$. Since $\mathrm{cap}_{s, p}(E \cap \Omega, \Omega)=0$, there exists an open neighborhood $G \subset \Omega$ of $E \cap \Omega$ such that $\mathrm{cap}_{s, p}(G, \Omega) < \varepsilon$. Let $K \subset G$ be a compact set and take $\varphi \in W(K, \Omega)$ such that $[\varphi]_{W^{s, p}(\mathbb{R}^n)}^p \leq \mathrm{cap}_{s, p}(K, \Omega) + \varepsilon < 2\varepsilon$. Thus, the fractional Poincar\'e inequality (see e.g.\ Bj\"orn--Bj\"orn--Kim~\cite[(5.6)]{BBK24}) shows that
\begin{equation*}
    |K| \leq \int_\Omega |\varphi|^p \,\mathrm{d}x \leq C [\varphi]_{W^{s, p}(\mathbb{R}^n)} < C\varepsilon,
\end{equation*}
which in turn implies that $|G| \leq C\varepsilon$. Therefore, we conclude that $|E|=0$.
\end{proof}

In case $E$ is a singleton, we have the following result.

\begin{lemma}\label{lem-cap-point}
	Let $x_0 \in \mathbb{R}^n$. Then $\{x_0\}$ is of $(s, p)$-capacity zero if and only if $sp \leq n$.
\end{lemma}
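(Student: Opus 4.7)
The plan is to handle the two implications separately using the tools already developed.

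For the sufficiency ($sp\leq n\Rightarrow$ capacity zero), I would verify $\mathrm{cap}_{s,p}(\{x_0\}\cap\Omega,\Omega)=0$ for every open $\Omega\subset\mathbb{R}^n$. The case $x_0\notin\Omega$ is trivial, so fix $R>0$ with $B_R(x_0)\subset\Omega$. Two monotonicity principles---extension by zero of admissible test functions from $W(\{x_0\},B_R(x_0))$ into $W(\{x_0\},\Omega)$, together with the containment $W(\overline{B}_r(x_0),B_R(x_0))\subset W(\{x_0\},B_R(x_0))$---yield
$$\mathrm{cap}_{s,p}(\{x_0\},\Omega)\leq \mathrm{cap}_{s,p}(\{x_0\},B_R(x_0))\leq \mathrm{cap}_{s,p}(\overline{B}_r(x_0),B_R(x_0))$$
for each $0<r\leq R/2$. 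Letting $r\to 0$, Lemma~\ref{lem-cap-ball} forces the right-hand side to zero in both subregimes: when $sp<n$, the bracket $r^{-(n-sp)/(p-1)}-R^{-(n-sp)/(p-1)}$ blows up and is raised to the negative power $1-p$; when $sp=n$, $\log(R/r)$ blows up and is again raised to $1-p<0$.

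For the necessity I argue the contrapositive: if $sp>n$, then already $\mathrm{cap}_{s,p}(\{x_0\},B_R(x_0))>0$ for any fixed $R>0$, so $\{x_0\}$ fails to have capacity zero. Given a competitor $\varphi\in C^\infty_c(B_R(x_0))$ with $\varphi(x_0)\geq 1$, the task is to produce a uniform lower bound on $[\varphi]_{W^{s,p}(\mathbb{R}^n)}^p$. I would combine the fractional Morrey embedding $\|\varphi\|_{L^\infty(\mathbb{R}^n)}\leq C\|\varphi\|_{W^{s,p}(\mathbb{R}^n)}$ (available since $sp>n$) with the fractional Poincar\'e inequality $\|\varphi\|_{L^p(\mathbb{R}^n)}\leq CR^s[\varphi]_{W^{s,p}(\mathbb{R}^n)}$ (available since $\mathrm{supp}\,\varphi\Subset B_R(x_0)$). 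Chaining them,
$$1\leq \varphi(x_0)\leq \|\varphi\|_{L^\infty(\mathbb{R}^n)}\leq C\bigl(\|\varphi\|_{L^p(\mathbb{R}^n)}+[\varphi]_{W^{s,p}(\mathbb{R}^n)}\bigr)\leq C(1+R^s)\,[\varphi]_{W^{s,p}(\mathbb{R}^n)},$$
so $[\varphi]_{W^{s,p}(\mathbb{R}^n)}^p\geq c(n,s,p,R)>0$, and passing to the infimum completes the argument.

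The delicate point is the necessity direction, since the capacity is built from the Gagliardo \emph{seminorm} alone while the standard Morrey embedding controls only the full Sobolev norm. The fractional Poincar\'e inequality, which exploits the compact support of $\varphi$ in $B_R(x_0)$, is what bridges this gap and converts the Morrey bound into a genuine seminorm lower bound. A more direct alternative would be to invoke the seminorm-only Morrey inequality $|\varphi(x_0)-\varphi(y)|\leq C|x_0-y|^{s-n/p}[\varphi]_{W^{s,p}(\mathbb{R}^n)}$ at any $y$ with $|y-x_0|=R$ (where $\varphi(y)=0$ because $\mathrm{supp}\,\varphi\Subset B_R(x_0)$), which yields the sharper lower bound $[\varphi]_{W^{s,p}(\mathbb{R}^n)}^p\geq cR^{n-sp}$ with $c=c(n,s,p)$.
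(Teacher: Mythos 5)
Your proof is correct, but it is a genuinely different argument from the paper's. The paper handles Lemma~\ref{lem-cap-point} by simply citing Lemma~5.5 of Bj\"orn--Bj\"orn--Kim~\cite{BBK24}, which already asserts the equivalence $sp\leq n \Leftrightarrow \mathrm{cap}_{s,p}(\{x_0\},\Omega)=0$ for all bounded open $\Omega\ni x_0$, and then uses the monotonicity of $\mathrm{cap}_{s,p}(\{x_0\},\cdot)$ in the ambient set to remove the boundedness restriction. You instead give a self-contained argument: for sufficiency you combine the two monotonicity properties of the variational capacity with the explicit decay of $\mathrm{cap}_{s,p}(\overline{B}_r,B_R)$ from Lemma~\ref{lem-cap-ball} (and the correct observation that the exponent $1-p<0$ makes both the subcritical and critical expressions vanish as $r\to 0$); for necessity you extract a uniform lower bound on the Gagliardo seminorm of any competitor from the fractional Morrey embedding plus a Poincar\'e inequality, or even more directly from the seminorm-only Morrey estimate $|\varphi(x_0)-\varphi(y)|\lesssim |x_0-y|^{s-n/p}[\varphi]_{W^{s,p}(\mathbb{R}^n)}$ evaluated at a point $y\in\partial B_R(x_0)$ where $\varphi$ vanishes. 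Your route makes the lemma self-contained within the paper's toolkit (modulo Lemma~\ref{lem-cap-ball}, which the paper already proves), at the cost of a slightly longer argument; the paper's route is shorter but defers the substance to an external reference. The second of your two necessity arguments is the cleaner one and gives the quantitative bound $[\varphi]_{W^{s,p}(\mathbb{R}^n)}^p\gtrsim R^{n-sp}$; either is sound.
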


\begin{proof}
It follows from \cite[Lemma~5.5]{BBK24} that $sp\leq n$ if and only if $\mathrm{cap}_{s, p}(\{x_0\}, \Omega)=0$ for every bounded open $\Omega$ containing $x_0$. Since $\mathrm{cap}_{s, p}(\{x_0\}, \cdot)$ is decreasing with respect to the set inclusion, the desired result follows.
\end{proof}

\section{Regularity estimates}\label{sec-regularity}

In this section, we provide several regularity estimates such as Caccioppoli-type estimates, H\"older estimates and the Harnack inequality, which play a fundamental role in the analysis of singularities for weak solutions of \eqref{eq-main}.

\subsection{Caccioppoli-type estimates}

Various Caccioppoli estimates are known in the literature, but we need special types of estimates due to the lack of regularity of weak solutions. Such estimates will be used in a crucial way in the sequel. For instance, if $u$ is a weak solution of \eqref{eq-main} in $\Omega \setminus E$ for some measure zero set $E$, one can regularize $u$ under some appropriate assumptions by using Caccioppoli-type estimates so that $u$ is a weak solution to the same equation in $\Omega$ or has nice regularity properties in $\Omega$.

In order to state the Caccioppoli-type estimates, we fix $\beta \geq \beta_0>0$, $\gamma = \beta+p-1$ and $\gamma_0 = \beta_0+p-1$. For $l>d>0$, we define the functions $F, G: [d, \infty) \to [0, \infty)$ by
\begin{equation}\label{eq-F}
F(t)=
\begin{cases}
t^{\gamma/p} &\text{if}~d\leq t \leq l, \\
l^{\gamma/p} (\frac{\gamma}{\gamma_0}(\frac{t}{l})^{\gamma_0/p} + \frac{\gamma_0-\gamma}{\gamma_0}) &\text{if}~t> l,
\end{cases}
\end{equation}
and
\begin{equation*}
G(t)=F(t)F'(t)^{p-1} - F(d)F'(d)^{p-1}.
\end{equation*}
Note that $F$ is the minimum of two polynomials.

\begin{lemma}\label{lem-Caccio1}
Let $E$ be a relatively closed set in $\Omega$ with measure zero and let $\bar{\eta}$ be a nonnegative smooth function vanishing in a neighborhood of $E$. Let $B_r(x_0) \subset B_R(x_0)\subset \Omega$ and let $\eta \in C^\infty_c(B_r(x_0))$ be a nonnegative function. Assume that
\begin{equation}\label{eq-d}
d > \left( R^{\varepsilon}\|b_2\|_{L^{n/(sp-\varepsilon)}(B_R(x_0))} \right)^{1/(p-1)}.
\end{equation}
If $u$ is a weak subsolution of \eqref{eq-main} in $B_R(x_0) \setminus E$, then
\begin{equation}\label{eq-Caccio1}
\begin{split}
&\int_{B_r(x_0)} \int_{B_r(x_0)} \frac{|F(\bar{u}(x))-F(\bar{u}(y))|^p}{|x-y|^{n+sp}} \max\{ (\eta\bar{\eta})(x), (\eta\bar{\eta})(y) \}^p \,\mathrm{d}y \,\mathrm{d}x \\
&\leq C \left( 1+\gamma^{p-1}+\left( \frac{\gamma}{\beta_0} \right)^p \right) \int_{B_r(x_0)} F(\bar{u}(x))^p \int_{B_r(x_0)} \frac{|(\eta\bar{\eta})(x)-(\eta\bar{\eta})(y)|^p}{|x-y|^{n+sp}} \,\mathrm{d}y\,\mathrm{d}x \\
&\quad + C \left( \gamma^{p-1} + \gamma^{2sp(p-1)/\varepsilon} \right) r^{-sp} \|F(\bar{u})\eta\bar{\eta}\|_{L^p(B_r(x_0))}^p \\
&\quad + C \frac{\gamma}{\beta_0} \left( \sup_{x \in \mathrm{supp} \, \eta} \int_{\mathbb{R}^n \setminus B_r(x_0)} \frac{\bar{u}^{p-1}(y)}{|x-y|^{n+sp}} \,\mathrm{d}y \right) \int_{B_r(x_0)} F(\bar{u}) F'(\bar{u})^{p-1} (\eta\bar{\eta})^p \,\mathrm{d}x,
\end{split}
\end{equation}
where $\bar{u}=u_++d$ and $C=C(n, s, p, \Lambda, \varepsilon, R^{\varepsilon}\|b_1\|_{L^{n/(sp-\varepsilon)}(B_R(x_0))})>0$.
\end{lemma}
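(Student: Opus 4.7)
Test \eqref{eq-sol} with the Moser-type function $\varphi = G(\bar u)(\eta\bar\eta)^p$. This is admissible: $\bar u\geq d$ and $G(d)=0$ give $\varphi\geq 0$; the support lies in $B_r(x_0)\setminus E$ since $\eta\in C^\infty_c(B_r(x_0))$ and $\bar\eta$ vanishes near $E$; and $G(\bar u)\in W^{s,p}_{\mathrm{loc}}(\Omega\setminus E)$ by the polynomial growth of $F$, so Lemma~\ref{lem-test-prod} and Proposition~\ref{prop-test} apply. Combined with \eqref{eq-str}, this yields
\begin{equation*}
\mathcal{E}(u,\varphi) \leq \int_{B_r(x_0)} \bigl(b_1(x)\bar u(x)^{p-1}+b_2(x)\bigr) G(\bar u(x))(\eta\bar\eta)(x)^p \,\mathrm{d}x.
\end{equation*}

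Decompose $\mathcal{E}(u,\varphi)=I_{\mathrm{loc}}+2I_{\mathrm{tail}}$ according to whether both $x,y$ lie in $B_r(x_0)$. For $I_{\mathrm{loc}}$ I would adapt the standard pointwise algebraic inequality of \cite{DCKP14, Coz17} to the piecewise $F$ of \eqref{eq-F}: with $a=\bar u(x)$, $b=\bar u(y)$, $\eta_1=(\eta\bar\eta)(x)$, $\eta_2=(\eta\bar\eta)(y)$,
\begin{equation*}
|a-b|^{p-2}(a-b)\bigl(G(a)\eta_1^p-G(b)\eta_2^p\bigr) \geq \tfrac{1}{C}|F(a)-F(b)|^p\max\{\eta_1,\eta_2\}^p - C\bigl(1+\gamma^{p-1}+(\gamma/\beta_0)^p\bigr)\bigl(F(a)^p+F(b)^p\bigr)|\eta_1-\eta_2|^p,
\end{equation*}
where the factor $\gamma/\beta_0$ reflects the slope transition of $F$ at the breakpoint $t=l$. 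Combined with \eqref{eq-ellipticity}, this produces the Gagliardo seminorm on the left-hand side of \eqref{eq-Caccio1} and its first correction term.

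For $I_{\mathrm{tail}}$, since $(\eta\bar\eta)(y)=0$ when $y\notin B_r(x_0)$ and the contribution from $\{u(y)\leq u(x)\}$ is nonnegative (hence discardable in the subsolution inequality), the remainder is bounded by
\begin{equation*}
C\int_{B_r(x_0)} G(\bar u(x))(\eta\bar\eta)(x)^p \int_{\mathbb{R}^n\setminus B_r(x_0)} \bar u(y)^{p-1}|x-y|^{-n-sp}\,\mathrm{d}y\,\mathrm{d}x.
\end{equation*}
The estimate $G(\bar u)\leq C(\gamma/\beta_0) F(\bar u)F'(\bar u)^{p-1}$ on $[d,\infty)$, which is obtained by bounding the subtracted constant $F(d)F'(d)^{p-1}$ and using the slope-ratio at $t=l$, then produces exactly the tail term in \eqref{eq-Caccio1}.

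For the right-hand side coming from $b$, combine $G(\bar u)\leq F(\bar u)F'(\bar u)^{p-1}$ with the piecewise-verifiable bound $tF'(t)\leq(\gamma/p)F(t)$ to obtain $\bar u^{p-1}G(\bar u)\leq C\gamma^{p-1}F(\bar u)^p$; for the $b_2$-contribution, the same bound with $\bar u\geq d$ gives $G(\bar u)\leq C\gamma^{p-1}F(\bar u)^p/d^{p-1}$, and hypothesis \eqref{eq-d} keeps $\|b_2\|_{L^{n/(sp-\varepsilon)}}R^\varepsilon/d^{p-1}\leq 1$. After H\"older with exponent $n/(sp-\varepsilon)$, both contributions reduce to $C\gamma^{p-1}\|F(\bar u)\eta\bar\eta\|_{L^{np/(n-sp+\varepsilon)}}^p$ up to constants depending on the data. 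The fractional Sobolev embedding---Theorem~\ref{thm-Sobolev} when $sp<n$, or the $(\tilde s,\tilde p)$-substitution of \eqref{eq-sp} via Lemmas~\ref{lem-Jensen} and~\ref{lem-Holder} when $sp=n$---followed by interpolation between $L^p$ and $L^{p_s^*}$ (with interpolation exponent $\varepsilon/(sp)$) and Young's inequality with a small parameter then absorbs the $L^{p_s^*}$-part into the Gagliardo seminorm on the left of \eqref{eq-Caccio1}, while the cost on $\|F(\bar u)\eta\bar\eta\|_{L^p}^p$ produces the factor $\gamma^{2sp(p-1)/\varepsilon}$ after calibration of the Young parameter to match the $\gamma^{p-1}$-prefactor. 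The main technical difficulty is precisely this calibration of the Young-absorption step, which must be done so that only a fixed fraction of the Gagliardo seminorm is consumed and the resulting $\gamma$-dependence has the correct power $2sp(p-1)/\varepsilon$.
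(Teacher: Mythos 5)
Your proposal follows essentially the same route as the paper: test with $\varphi=G(\bar u)(\eta\bar\eta)^p$, split $\mathcal{E}(u,\varphi)$ into the local piece on $B_r\times B_r$ (handled by a pointwise algebraic inequality of the type in Lemma~\ref{lem-alg-ineq1}) and a tail piece (handled by discarding the sign-favourable part and bounding $G\leq F(F')^{p-1}$), then absorb the $b$-contribution via H\"older, Sobolev (with the $(\tilde s,\tilde p)$-reduction of \eqref{eq-sp} when $sp=n$), interpolation and a Young-type absorption (which the paper packages as Serrin's Lemma~2). The only cosmetic deviations are that you attribute the factor $\gamma/\beta_0$ in the tail term to a bound on $G$ itself rather than to the $\frac{\gamma}{\beta_0}$-scaling built into the algebraic inequality, and you describe the final power $\gamma^{2sp(p-1)/\varepsilon}$ as emerging directly from the calibration, whereas the paper obtains $\gamma^{sp(p-1)/\varepsilon}$ (resp.\ $\gamma^{2\tilde s p(p-1)/\varepsilon}$ when $sp=n$) and then bounds it by $\gamma^{p-1}+\gamma^{2sp(p-1)/\varepsilon}$; neither point affects correctness.
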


Before we provide the proof of Lemma~\ref{lem-Caccio1}, we prove the following algebraic inequality as a discrete version of
\begin{equation*}
|\nabla u|^{p-2} \nabla u \cdot \nabla \left( \frac{\gamma}{\beta_0}G(\bar{u}) \eta^p \right) \geq c \, |\nabla (F(\bar{u}) \eta)|^p - C \left( 1+\left( \frac{\gamma}{\beta_0} \right)^p \right) |F(\bar{u})|^p |\nabla \eta|^p,
\end{equation*}
where the constants $c, C>0$ depend only on $p$.

\begin{lemma}\label{lem-alg-ineq1}
Let $a, b \in \mathbb{R}$, $A=a_++d, B=b_++d$ and $\eta_1, \eta_2 \geq 0$. Then
\begin{equation*}
\begin{split}
&\frac{\gamma}{\beta_0} |a-b|^{p-2}(a-b) (G(A) \eta_1^p - G(B) \eta_2^p) \\
&\geq c \left| F(A) - F(B) \right|^p \max\{\eta_1, \eta_2\}^p - C \left( 1+ \left( \frac{\gamma}{\beta_0} \right)^p \right) \max \lbrace F(A), F(B) \rbrace^p |\eta_1-\eta_2|^p,
\end{split}
\end{equation*}
where $c$ and $C$ are positive constants depending only on $p$.
\end{lemma}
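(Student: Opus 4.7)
By the symmetry under the exchange $(a, \eta_1) \leftrightarrow (b, \eta_2)$, I may assume $a \geq b$, in which case $A = a_+ + d \geq b_+ + d = B$ and $|a-b|^{p-2}(a-b) = (a-b)^{p-1} \geq (A-B)^{p-1} \geq 0$; the trivial case $a, b \leq 0$ (for which $A = B = d$ and the left-hand side vanishes) is disposed of immediately. The overall plan is to first prove an unweighted comparison between $G$ and $F^p$ via a discrete analogue of an ODE identity, and then to handle the weights by a case analysis on the ordering of $\eta_1, \eta_2$ combined with Young's inequality.

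The unweighted inequality is
$$\frac{\gamma}{\beta_0}(A-B)^{p-1}\bigl(G(A) - G(B)\bigr) \geq c(p)\bigl(F(A) - F(B)\bigr)^p,$$
and rests on the pointwise bound $\frac{\gamma}{\beta_0}G'(t) \geq c(p)F'(t)^p$ for $t \in (d, \infty) \setminus \{l\}$, verified piecewise. On $(d,l)$, where $F(t) = t^{\gamma/p}$, the identity $\gamma - p = \beta - 1$ yields $\frac{\gamma}{\beta_0}G'(t) = \frac{\beta p}{\beta_0}F'(t)^p \geq p\,F'(t)^p$. On $(l, \infty)$, a direct differentiation of $F(t)F'(t)^{p-1}$ and the identity $\gamma_0 = \beta_0 + (p-1)$ produce the quotient
$$\frac{\gamma}{\beta_0}\frac{G'(t)}{F'(t)^p} = \frac{\gamma p}{\gamma_0} - \frac{(p-1)(\gamma_0-p)(\gamma-\gamma_0)}{\beta_0 \gamma_0}\left(\frac{l}{t}\right)^{\gamma_0/p},$$
which I bound below by a universal positive constant after separating the case $\gamma_0 \geq p$ (where the correction is nonpositive and is controlled via the algebraic simplification $\gamma p\beta_0 - (p-1)(\beta_0-1)(\gamma-\gamma_0) = \gamma_0[\gamma + (p-1)(\beta_0-1)]$) from the case $\gamma_0 < p$ (where the correction has the same sign as the leading term). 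Integrating from $B$ to $A$ and applying H\"older's inequality $\int_B^A F'(t)\,dt \leq (A-B)^{(p-1)/p}\bigl(\int_B^A F'(t)^p\,dt\bigr)^{1/p}$ converts the pointwise bound into the desired discrete inequality.

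With the unweighted inequality in hand, I split on the ordering of the $\eta_i$'s. In Case A, $\eta_1 \geq \eta_2$ (so $\max\{\eta_1, \eta_2\} = \eta_1$), the decomposition
$$G(A)\eta_1^p - G(B)\eta_2^p = (G(A) - G(B))\eta_1^p + G(B)(\eta_1^p - \eta_2^p)$$
consists of two nonnegative summands; dropping the second and using $(a-b)^{p-1} \geq (A-B)^{p-1}$ together with the unweighted inequality gives the full conclusion already with $C = 0$. In Case B, $\eta_1 < \eta_2$, I use the alternative decomposition $G(A)\eta_1^p - G(B)\eta_2^p = (G(A) - G(B))\eta_2^p - G(A)(\eta_2^p - \eta_1^p)$: the first summand contributes the good term $c(p)(F(A) - F(B))^p \eta_2^p$, and the error $\frac{\gamma}{\beta_0}(a-b)^{p-1}G(A)(\eta_2^p - \eta_1^p)$ is handled via $\eta_2^p - \eta_1^p \leq p\,\eta_2^{p-1}|\eta_1 - \eta_2|$ and a weighted Young's inequality. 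Splitting $G(A) = (G(A) - G(B)) + G(B)$ in the error, the first piece is absorbed into the good term with a Young-weight depending on $\gamma/\beta_0$, and the second is bounded by the admissible error $C(p)(1 + (\gamma/\beta_0)^p)\max\{F(A), F(B)\}^p|\eta_1-\eta_2|^p$ via the structural estimate $G(t) \leq F(t)F'(t)^{p-1}$ verified on each piece of $F$.

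The main technical obstacle will be the pointwise comparison $\frac{\gamma}{\beta_0}G'(t) \geq c(p)F'(t)^p$ on the second piece $(l, \infty)$, where the $F''$-contribution to $G'$ changes sign depending on whether $\gamma_0$ is above or below $p$ and the cancellation with the leading $\gamma p/\gamma_0$ relies crucially on the identity $\gamma_0 = \beta_0 + (p-1)$; once this is in place, the case analysis and the Young absorption in Case B are mechanical given the elementary bound $a - b \geq A - B$.
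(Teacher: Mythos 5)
Your reduction to $a\geq b$, the verification of the pointwise comparison $\frac{\gamma}{\beta_0}G'(t)\geq p\,F'(t)^p$ (including the correct identity $\gamma p\beta_0-(p-1)(\gamma_0-p)(\gamma-\gamma_0)=\gamma_0[\gamma+(p-1)(\beta_0-1)]$), and its integration into the unweighted inequality $(F(A)-F(B))^p\leq\frac{\gamma}{p\beta_0}(A-B)^{p-1}(G(A)-G(B))$ are all correct and agree with the paper (where the pointwise bound is stated as a ``straightforward computation''). Your Case~A ($\eta_1\geq\eta_2$) is also fine. The problem is in your Case~B, and it is not a cosmetic one.

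The crucial decision in this lemma is \emph{not} the ordering of $\eta_1,\eta_2$ but the ordering of $F'(A)$ and $F'(B)$. The paper records two rearrangements of $L$ (one with the ``good'' factor $\eta_1^p$ and error involving $G(B)$, the other with $\eta_2^p$ and $G(A)$) and then selects the one in which the error contains $F'$ evaluated at the point where $F'$ is \emph{smaller}. This matters because the only sharp bound available for $(A-B)F'(\cdot)$ is $(A-B)\min\{F'(A),F'(B)\}\leq F(A)-F(B)$; the quantity $(A-B)F'(\cdot)$ at the other endpoint is genuinely unbounded in terms of $F(A)-F(B)$ (take $\gamma_0<p<\gamma$ and $l\in[B,A]$, or $\gamma>p$ on the first polynomial piece with $B$ close to $d$, to see that $(A-B)F'$ can be a factor of order $\gamma/p$ or worse larger than $F(A)-F(B)$). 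Your Case~B fixes a single decomposition (Version 2, with $G(A)$ in the error) based on $\eta_1<\eta_2$, and then splits $G(A)=(G(A)-G(B))+G(B)$. After you ``absorb'' the $(G(A)-G(B))$ piece by Young (which, incidentally, is algebraically identical to passing to Version 1, so it costs you nothing and gains you nothing), the remaining error carries $G(B)\leq F(B)F'(B)^{p-1}$, hence $(A-B)F'(B)$. If $F'(B)>F'(A)$ this is not $\lesssim F(A)-F(B)$, so the Young residual $\epsilon[(A-B)F'(B)]^p\eta_2^p$ cannot be absorbed into $(F(A)-F(B))^p\eta_2^p$, and the claimed bound $C(p)(1+(\gamma/\beta_0)^p)\max\{F(A),F(B)\}^p|\eta_1-\eta_2|^p$ does not follow.

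There is a second, related issue: even where your absorptions go through, the structural estimate you use for the residual of the first piece, namely $(A-B)^{p-1}(G(A)-G(B))\leq F(A)[(A-B)F'(A)]^{p-1}\leq(\gamma/p)^{p-1}F(A)^p$ (via $tF'(t)\leq\frac{\gamma}{p}F(t)$), yields the coefficient $\frac{\gamma^p}{\beta_0 p^{p-1}}$, which is \emph{not} bounded by $C(p)(1+(\gamma/\beta_0)^p)$ (take $\gamma=\beta_0\to\infty$). The sharp coefficient $(\gamma/\beta_0)^p$ is obtained precisely because the paper replaces $(A-B)F'(\cdot)$ by $F(A)-F(B)$ at the \emph{minimizer} of $F'$ before applying Young, and this substitution is only valid at that endpoint. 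Finally, the paper's argument always yields a good term with $\min\{\eta_1,\eta_2\}^p$, and passes to $\max\{\eta_1,\eta_2\}^p$ via an elementary triangle estimate (Lemma~A.3 in Kim--Lee--Lee~\cite{KLL23}); your sketch contains no such step, and in the subcase $\eta_1<\eta_2$, $F'(B)<F'(A)$, you cannot avoid needing it. To fix the proof you should replace the $\eta$-based case split with a case split on $F'(A)\lessgtr F'(B)$, use the decomposition whose error involves the smaller $F'$, and finish with the min-to-max triangle estimate.
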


\begin{proof}
We may assume that $a>b$. Note that since $a-b \geq A-B$ and $(a-b)^{p-1}G(B)=(A-B)^{p-1}G(B)$, we have
\begin{align*}
L
&\coloneqq \frac{\gamma}{\beta_0} (a-b)^{p-1} (G(A) \eta_1^p - G(B) \eta_2^p) \\
&= \frac{\gamma}{\beta_0} (a-b)^{p-1} (G(A) - G(B))\eta_1^p + \frac{\gamma}{\beta_0} (a-b)^{p-1} G(B) (\eta_1^p - \eta_2^p) \\
&\geq \frac{\gamma}{\beta_0} (A-B)^{p-1} (G(A) - G(B))\eta_1^p + \frac{\gamma}{\beta_0} (A-B)^{p-1} G(B) (\eta_1^p - \eta_2^p) \\
&= \frac{\gamma}{\beta_0} (A-B)^{p-1} (G(A) - G(B))\eta_2^p + \frac{\gamma}{\beta_0} (A-B)^{p-1} G(A) (\eta_1^p - \eta_2^p).
\end{align*}
A straightforward computation shows that
\begin{alignat*}{2}
G'(t) &= p\beta \gamma^{-1} (F')^p &&\text{if}~t \leq l, \\
G'(t) &\geq p\beta_0 \gamma^{-1} (F')^p \quad&&\text{if}~t \geq l.
\end{alignat*}
It thus follows from Jensen's inequality that
\begin{equation*}
(F(A)-F(B))^p \leq (A-B)^p \fint_B^A (F')^p(t) \,\mathrm{d}t \leq \frac{\gamma}{p\beta_0} (A-B)^{p-1}(G(A)-G(B)).
\end{equation*}
This together with the inequalities $G \leq F(F')^{p-1}$ and $|\eta_1^p-\eta_2^p| \leq p|\eta_1-\eta_2| \max\{\eta_1, \eta_2\}^{p-1}$ shows that
\begin{equation*}
	\begin{aligned}
		L &\geq p(F(A)-F(B))^p \eta_2^p - \frac{p\gamma}{\beta_0} F(A)|\eta_1 - \eta_2| \left( (A-B) F'(A) \max\{\eta_1, \eta_2\} \right)^{p-1} \quad\text{and}  \\
		L &\geq p(F(A)-F(B))^p \eta_1^p - \frac{p\gamma}{\beta_0} F(B)|\eta_1 - \eta_2| \left( (A-B) F'(B) \max\{\eta_1, \eta_2\} \right)^{p-1}.
	\end{aligned}
\end{equation*}
We observe that the minimum of $F'$ on $[B, A]$ is either $F'(A)$ or $F'(B)$, so
\begin{equation*}
F(A)-F(B) \geq (A-B) \min\{ F'(A), F'(B) \}.
\end{equation*}
An application of Young's inequality shows that
\begin{align*}
L
&\geq p(F(A)-F(B))^p \min\{\eta_1, \eta_2\}^p - \frac{p}{2^{2p-1}} (F(A)-F(B))^p \max\{\eta_1, \eta_2\}^p \\
&\quad - C\left( \frac{\gamma}{\beta_0} \right)^p \max\{F(A), F(B)\}^p |\eta_1-\eta_2|^p,
\end{align*}
where $C=C(p)>0$. The desired result now follows by applying Lemma~A.3 in Kim--Lee--Lee~\cite{KLL23}.
\end{proof}

\begin{proof}[Proof of Lemma~\ref{lem-Caccio1}]
Let us write $B_r=B_r(x_0)$ for simplicity. We define $\varphi=G(\bar{u}) (\eta\bar{\eta})^p \geq 0$. 
By Proposition~\ref{prop-test}, $\varphi$ can be used as a test function for \eqref{eq-main} in $B_R \setminus E$. Thus, we have
\begin{equation*}
\mathcal{E}(u, \varphi) + \int_{B_R \setminus E} b(x, u(x)) \varphi(x) \,\mathrm{d}x \leq 0.
\end{equation*}
Let $L$ denote the left-hand side of \eqref{eq-Caccio1}. An application of Lemma~\ref{lem-alg-ineq1} with $a=\bar{u}(x)$, $b=\bar{u}(y)$, $\eta_1=(\eta\bar{\eta})(x)$ and $\eta_2=(\eta\bar{\eta})(y)$ shows that
\begin{equation*}
\begin{split}
L
&\leq C_1 \left( 1+\left( \frac{\gamma}{\beta_0} \right)^p \right) \int_{B_r} F(\bar{u}(x))^p \int_{B_r} \frac{|(\eta\bar{\eta})(x)-(\eta\bar{\eta})(y)|^p}{|x-y|^{n+sp}} \,\mathrm{d}y\,\mathrm{d}x \\
&\quad - C_2 \frac{\gamma}{\beta_0} \int_{B_r} \int_{\mathbb{R}^n \setminus B_r} |\bar{u}(x)-\bar{u}(y)|^{p-2} (\bar{u}(x)-\bar{u}(y)) \varphi(x) k(x, y) \,\mathrm{d}y \,\mathrm{d}x \\
&\quad + \int_{B_r} (b_1u_+^{p-1} + b_2) G(\bar{u})(\eta \bar{\eta})^p \,\mathrm{d}x,
\end{split}
\end{equation*}
where $C_1$ and $C_2$ are positive constants depending only on $p$ and $\Lambda$.

Since $G\leq F(F')^{p-1}$ and $tF'(t) \leq \frac{\gamma}{p}F(t)$, we obtain that
\begin{align*}
&-\int_{B_r} \int_{\mathbb{R}^n \setminus B_r} |\bar{u}(x)-\bar{u}(y)|^{p-2} (\bar{u}(x)-\bar{u}(y)) \varphi(x) k(x, y) \,\mathrm{d}y \,\mathrm{d}x \\
&\leq \int_{B_r} \int_{\mathbb{R}^n \setminus B_r} (\bar{u}(y)-\bar{u}(x))^{p-1} G(\bar{u}(x)) (\eta\bar{\eta})^p(x) {\bf 1}_{\lbrace \bar{u}(x) \leq \bar{u}(y) \rbrace} k(x, y) \,\mathrm{d}y \,\mathrm{d}x \\
&\leq \Lambda \int_{B_r} F(\bar{u}(x)) F'(\bar{u}(x))^{p-1} (\eta\bar{\eta})^p(x) \int_{\mathbb{R}^n \setminus B_r} \frac{\bar{u}^{p-1}(y)}{|x-y|^{n+sp}} \,\mathrm{d}y \,\mathrm{d}x \\
&\leq \Lambda \left( \sup_{x \in \mathrm{supp} \, \eta} \int_{\mathbb{R}^n \setminus B_r} \frac{\bar{u}^{p-1}(y)}{|x-y|^{n+sp}} \,\mathrm{d}y \right) \int_{B_r} F(\bar{u}) F'(\bar{u})^{p-1} (\eta\bar{\eta})^p \,\mathrm{d}x
\end{align*}
and that
\begin{equation*}
\int_{B_r} (b_1u_+^{p-1} + b_2) G(\bar{u})(\eta \bar{\eta})^p \,\mathrm{d}x \leq \left( \frac{\gamma}{p} \right)^{p-1} \int_{B_r} (b_1+d^{1-p}b_2) (F(\bar{u}) \eta\bar{\eta})^p \,\mathrm{d}x.
\end{equation*}

In the rest of the proof, we focus on the estimate of the last integral in the display above. Let us first consider the case $sp<n$. Since \eqref{eq-d} implies
\begin{equation}\label{eq-b}
R^\varepsilon \|b_1+d^{1-p}b_2\|_{L^{\frac{n}{sp-\varepsilon}}(B_R)} < R^\varepsilon \|b_1\|_{L^{\frac{n}{sp-\varepsilon}}(B_R)} + 1 \leq C,
\end{equation}
H\"older's inequality shows that
\begin{equation*}
\begin{split}
\int_{B_r} (b_1+d^{1-p}b_2) (F(\bar{u}) \eta\bar{\eta})^p \,\mathrm{d}x
&\leq \|b_1+d^{1-p}b_2\|_{L^{\frac{n}{sp-\varepsilon}}(B_R)} \|F(\bar{u})\eta\bar{\eta}\|_{L^p(B_r)}^{\varepsilon/s} \|F(\bar{u})\eta\bar{\eta}\|_{L^{p^{\ast}_{s}}(B_r)}^{p-\varepsilon/s} \\
&\leq C r^{-\varepsilon} \|F(\bar{u})\eta\bar{\eta}\|_{L^p(B_r)}^{\varepsilon/s} \|F(\bar{u})\eta\bar{\eta}\|_{L^{p^{\ast}_{s}}(B_r)}^{p-\varepsilon/s}.
\end{split}
\end{equation*}
Moreover, it follows from Theorem~\ref{thm-Sobolev} and
\begin{equation}\label{eq-triangle}
\begin{split}
|(F(\bar{u})\eta\bar{\eta})(x)-(F(\bar{u})\eta\bar{\eta})(y)|
&\leq |F(\bar{u}(x))-F(\bar{u}(y))| \max\{(\eta\bar{\eta})(x), (\eta\bar{\eta})(y)\} \\
&\quad + \max\{F(\bar{u}(x)), F(\bar{u}(y)) \} |(\eta\bar{\eta})(x)-(\eta\bar{\eta})(y)|
\end{split}
\end{equation}
that
\begin{equation*}
\begin{split}
\|F(\bar{u})\eta\bar{\eta}\|_{L^{p^{\ast}_{s}}(B_r)} \leq C \left( [F(\bar{u})\eta\bar{\eta}]_{W^{s, p}(B_r)} + r^{-s}\|F(\bar{u})\eta\bar{\eta}\|_{L^p(B_r)} \right) \leq C(L + I_1 + I_2)^{1/p},
\end{split}
\end{equation*}
where
\begin{equation*}
I_1 \coloneqq \int_{B_r} F(\bar{u}(x))^p \int_{B_r} \frac{|(\eta\bar{\eta})(x)-(\eta\bar{\eta})(y)|^p}{|x-y|^{n+sp}} \,\mathrm{d}y\,\mathrm{d}x \quad\text{and}\quad I_2 \coloneqq r^{-sp} \|F(\bar{u})\eta\bar{\eta}\|_{L^p(B_r)}^p.
\end{equation*}

Combining all the estimates above and using Young's inequality yields
\begin{equation}\label{eq-Young}
\begin{split}
L
&\leq C \left( 1+\left( \frac{\gamma}{\beta_0} \right)^p \right) I_1 + C\gamma^{p-1} I_2^{\frac{\varepsilon}{sp}} (L+I_1+I_2)^{1-\frac{\varepsilon}{sp}} \\
&\quad + C \frac{\gamma}{\beta_0} \left( \sup_{x \in \mathrm{supp} \, \eta} \int_{\mathbb{R}^n \setminus B_r} \frac{\bar{u}^{p-1}(y)}{|x-y|^{n+sp}} \,\mathrm{d}y \right) \int_{B_r} F(\bar{u}) F'(\bar{u})^{p-1} (\eta\bar{\eta})^p \,\mathrm{d}x \\
&\leq C\gamma^{p-1} I_2^{\frac{\varepsilon}{sp}} L^{1-\frac{\varepsilon}{sp}} + C \left( 1+\gamma^{p-1}+\left( \frac{\gamma}{\beta_0} \right)^p \right) I_1 + C\gamma^{p-1} I_2 \\
&\quad + C \frac{\gamma}{\beta_0} \left( \sup_{x \in \mathrm{supp} \, \eta} \int_{\mathbb{R}^n \setminus B_r} \frac{\bar{u}^{p-1}(y)}{|x-y|^{n+sp}} \,\mathrm{d}y \right) \int_{B_r} F(\bar{u}) F'(\bar{u})^{p-1} (\eta\bar{\eta})^p \,\mathrm{d}x,
\end{split}
\end{equation}
where $C=C(n,s,p,\Lambda, R^{\varepsilon} \|b_1\|_{L^{n/(sp-\varepsilon)}(B_R)})>0$. The desired estimate \eqref{eq-Caccio1} now follows from Lemma~2 in Serrin~\cite{Ser64} and a simple observation that $\gamma^{sp(p-1)/\varepsilon} \leq \gamma^{p-1}+\gamma^{2sp(p-1)/\varepsilon}$.

Let us next consider the case $sp=n$. We set $\tilde{s}$ and $\tilde{p}$ as in \eqref{eq-sp} so that $\tilde{p}<p$ and $\tilde{s}\tilde{p}<n$. Then it follows from H\"older's inequality and \eqref{eq-b} that
\begin{equation*}
\begin{split}
\int_{B_r} (b_1+d^{1-p}b_2) (F(\bar{u})\eta\bar{\eta})^p \,\mathrm{d}x
&\leq \|b_1+d^{1-p}b_2\|_{L^{\frac{n}{n-\varepsilon}}(B_R)} \|F(\bar{u})\eta\bar{\eta}\|_{L^{\tilde{p}}(B_r)}^{\varepsilon/(2\tilde{s})} \|F(\bar{u})\eta\bar{\eta}\|_{L^{\tilde{p}^{\ast}_{\tilde{s}}}(B_r)}^{p-\varepsilon/(2\tilde{s})} \\
&\leq C r^{-\varepsilon} \left( r^{n\frac{p-\tilde{p}}{p\tilde{p}}} \|F(\bar{u})\eta\bar{\eta}\|_{L^{p}(B_r)} \right)^{\varepsilon/(2\tilde{s})} \|F(\bar{u})\eta\bar{\eta}\|_{L^{\tilde{p}^{\ast}_{\tilde{s}}}(B_r)}^{p-\varepsilon/(2\tilde{s})} \\
&= C I_2^{\varepsilon/(2\tilde{s}p)} \left( r^{-\varepsilon/2} \|F(\bar{u})\eta\bar{\eta}\|_{L^{\tilde{p}^{\ast}_{\tilde{s}}}(B_r)}^p \right)^{1-\varepsilon/(2\tilde{s}p)}.
\end{split}
\end{equation*}
By applying Theorem~\ref{thm-Sobolev} and Lemma~\ref{lem-Jensen}, and then using \eqref{eq-triangle}, we obtain that
\begin{equation*}
\begin{split}
r^{-\varepsilon/2} \|F(\bar{u})\eta\bar{\eta}\|_{L^{\tilde{p}^{\ast}_{\tilde{s}}}(B_r)}^p
&\leq C r^{-\varepsilon/2} \left( [F(\bar{u})\eta\bar{\eta}]_{W^{\tilde{s}, \tilde{p}}(B_r)} + r^{-\tilde{s}}\|F(\bar{u})\eta\bar{\eta}\|_{L^{\tilde{p}}(B_r)} \right)^p \\
&\leq C r^{-\varepsilon/2 + n\frac{p-\tilde{p}}{\tilde{p}} + (s-\tilde{s})p} \left( [F(\bar{u})\eta\bar{\eta}]_{W^{s, p}(B_r)} + r^{-s} \|F(\bar{u})\eta\bar{\eta}\|_{L^p(B_r)} \right)^p \\
&\leq C (L + I_1 + I_2).
\end{split}
\end{equation*}
Combining the estimates above yields \eqref{eq-Young} with the exponent $\varepsilon/(sp)$ replaced by $\varepsilon/(2\tilde{s}p)$. The desired estimate \eqref{eq-Caccio1} then follows from \cite[Lemma~2]{Ser64} and that $\gamma^{2\tilde{s}p(p-1)/\varepsilon} \leq \gamma^{p-1}+\gamma^{2sp(p-1)/\varepsilon}$ as in the previous case.
\end{proof}

A particular choice of $\eta$ in Lemma~\ref{lem-Caccio1} yields the following lemma.

\begin{lemma}\label{lem-Caccio2}
Let $E \subset \mathbb{R}^n$ be a set of measure zero and let $\bar{\eta}$ be a nonnegative smooth function such that $\bar{\eta}$ vanishes in a neighborhood of $E$ and $0\leq\bar{\eta}\leq1$. Let $B_\rho(x_0) \Subset B_r(x_0) \subset B_R(x_0) \subset \Omega$. Assume \eqref{eq-d}. If $u$ is a weak subsolution of \eqref{eq-main} in $B_R(x_0) \setminus E$, then
\begin{equation}\label{eq-Caccio2}
\begin{split}
&\int_{B_\rho(x_0)} \int_{B_\rho(x_0)} \frac{|F(\bar{u}(x))-F(\bar{u}(y))|^p}{|x-y|^{n+sp}} \max\{ \bar{\eta}(x), \bar{\eta}(y) \}^p \,\mathrm{d}y \,\mathrm{d}x \\
&\leq C\left( 1+\gamma^{p-1}+\left( \frac{\gamma}{\beta_0} \right)^p \right) \int_{B_r(x_0)} F(\bar{u}(x))^p \int_{B_r(x_0)} \frac{|\bar{\eta}(x)-\bar{\eta}(y)|^p}{|x-y|^{n+sp}} \,\mathrm{d}y\,\mathrm{d}x \\
&\quad + C\left( 1+\gamma^{2sp(p-1)/\varepsilon}+\left( \frac{\gamma}{\beta_0} \right)^p \right) \left( \frac{r}{r-\rho} \right)^p r^{-sp}\|F(\bar{u})\|_{L^p(B_r(x_0))}^p \\
&\quad + C\frac{\gamma^p}{\beta_0} \frac{\mathrm{Tail}^{p-1}(\bar{u}; x_0, r)}{d^{p-1}} \left( \frac{r}{r-\rho} \right)^{n+sp} r^{-sp}\|F(\bar{u})\|_{L^p(B_r(x_0))}^p,
\end{split}
\end{equation}
where $\bar{u}=u_++d$ and $C=C(n, s, p, \Lambda, \varepsilon, R^{\varepsilon}\|b_1\|_{L^{n/(sp-\varepsilon)}(B_R(x_0))})>0$.
\end{lemma}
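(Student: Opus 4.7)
The plan is to deduce Lemma~\ref{lem-Caccio2} directly from Lemma~\ref{lem-Caccio1} by making a concrete choice of the auxiliary cut-off $\eta$ adapted to the concentric pair $B_\rho(x_0)\Subset B_r(x_0)$. Specifically, I will take $\eta\in C^\infty_c(B_{(r+\rho)/2}(x_0))$ with $0\le\eta\le 1$, $\eta\equiv 1$ on $B_\rho(x_0)$, and $|\nabla\eta|\le C/(r-\rho)$, so that $\mathrm{supp}\,\eta$ has distance at least $(r-\rho)/2$ from $\mathbb{R}^n\setminus B_r(x_0)$. Plugging this $\eta$ into \eqref{eq-Caccio1} reduces the lemma to bounding each of the three terms on the right-hand side of \eqref{eq-Caccio1} by the corresponding term in \eqref{eq-Caccio2}; the left-hand side of \eqref{eq-Caccio2} is immediately controlled by that of \eqref{eq-Caccio1} since $\eta\bar\eta=\bar\eta$ on $B_\rho(x_0)\times B_\rho(x_0)$.

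For the first term, I will use the discrete product rule
\begin{equation*}
|(\eta\bar\eta)(x)-(\eta\bar\eta)(y)|^p\le 2^{p-1}\bigl[\eta(x)^p|\bar\eta(x)-\bar\eta(y)|^p+\bar\eta(y)^p|\eta(x)-\eta(y)|^p\bigr]
\end{equation*}
together with $0\le\eta,\bar\eta\le 1$. The piece involving $|\bar\eta(x)-\bar\eta(y)|^p$ is absorbed directly into the first term on the right-hand side of \eqref{eq-Caccio2}, while the piece involving $|\eta(x)-\eta(y)|^p$ is handled by the standard estimate
\begin{equation*}
\int_{B_r(x_0)}\frac{|\eta(x)-\eta(y)|^p}{|x-y|^{n+sp}}\,\mathrm{d}y\le C(r-\rho)^{-sp},
\end{equation*}
obtained by splitting the domain into $|x-y|\le r-\rho$ (where $|\eta(x)-\eta(y)|\le|\nabla\eta|\,|x-y|$) and its complement. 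Since $s<1$, one has $(r-\rho)^{-sp}\le (r/(r-\rho))^p r^{-sp}$, which matches the second term of \eqref{eq-Caccio2}. The middle term of \eqref{eq-Caccio1} is trivial: simply drop $\eta\bar\eta\le 1$ in the $L^p$-norm, and note that the prefactor $\gamma^{p-1}+\gamma^{2sp(p-1)/\varepsilon}$ is dominated by $1+\gamma^{2sp(p-1)/\varepsilon}+(\gamma/\beta_0)^p$.

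The only mildly delicate point, and the step I would carry out most carefully, is the third (tail) term. For $x\in\mathrm{supp}\,\eta$ and $y\in\mathbb{R}^n\setminus B_r(x_0)$ one has $|x-x_0|\le (r+\rho)/2$ and $|y-x|\ge(r-\rho)/2$, whence
\begin{equation*}
\frac{|y-x_0|}{|y-x|}\le 1+\frac{|x-x_0|}{|y-x|}\le \frac{2r}{r-\rho},
\end{equation*}
and consequently
\begin{equation*}
\sup_{x\in\mathrm{supp}\,\eta}\int_{\mathbb{R}^n\setminus B_r(x_0)}\frac{\bar u(y)^{p-1}}{|x-y|^{n+sp}}\,\mathrm{d}y\le\Bigl(\frac{2r}{r-\rho}\Bigr)^{n+sp}r^{-sp}\mathrm{Tail}^{p-1}(\bar u;x_0,r).
\end{equation*}
Next, since $\bar u\ge d$ and a direct computation using the definition of $F$ yields $tF'(t)\le(\gamma/p)F(t)$ for all $t\ge d$, we get $F(\bar u)F'(\bar u)^{p-1}\le(\gamma/p)^{p-1}d^{-(p-1)}F(\bar u)^p$. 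Combining these two inequalities and using $\eta\bar\eta\le 1$ converts the third term of \eqref{eq-Caccio1} into a bound of the form
\begin{equation*}
C\frac{\gamma^p}{\beta_0}\,\frac{\mathrm{Tail}^{p-1}(\bar u;x_0,r)}{d^{p-1}}\Bigl(\frac{r}{r-\rho}\Bigr)^{n+sp}r^{-sp}\|F(\bar u)\|_{L^p(B_r(x_0))}^p,
\end{equation*}
which is exactly the third term of \eqref{eq-Caccio2}. There is no genuine obstacle here beyond careful bookkeeping of the $\gamma$-powers and of the algebraic identity $\gamma F'(\bar u)^{p-1}F(\bar u)\lesssim\gamma^p F(\bar u)^p/d^{p-1}$.
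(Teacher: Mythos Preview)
Your proposal is correct and follows essentially the same approach as the paper: the same choice of cut-off $\eta\in C^\infty_c(B_{(r+\rho)/2}(x_0))$, the same splitting of $|(\eta\bar\eta)(x)-(\eta\bar\eta)(y)|^p$ into $\bar\eta$- and $\eta$-pieces, the same tail estimate via $|y-x_0|\le \frac{2r}{r-\rho}|x-y|$, and the same pointwise bound $F'(t)\le \frac{\gamma}{p}\frac{F(t)}{d}$ for the last factor. The bookkeeping of the $\gamma$-powers is also handled correctly.
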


\begin{proof}
Let us write $B_r = B_r(x_0)$ for simplicity and let $L$ denote the left-hand side of \eqref{eq-Caccio2}. We take a cut-off function $\eta \in C_c^{\infty}(B_{(r+\rho)/2})$ satisfying $\eta = 1$ on $B_\rho$, $0 \leq \eta \leq 1$ and $|\nabla \eta| \leq 2/(r-\rho)$. Since
\begin{equation*}
\begin{split}
&\int_{B_r} F(\bar{u}(x))^p \int_{B_r} \frac{|(\eta\bar{\eta})(x)-(\eta\bar{\eta})(y)|^p}{|x-y|^{n+sp}} \,\mathrm{d}y\,\mathrm{d}x \\
&\leq C \int_{B_r} F(\bar{u}(x))^p \int_{B_r} \left( \frac{|\bar{\eta}(x)-\bar{\eta}(y)|^p}{|x-y|^{n+sp}} + \frac{|\eta(x)-\eta(y)|^p}{|x-y|^{n+sp}} \right) \,\mathrm{d}y\,\mathrm{d}x \\
&\leq C \int_{B_r} F(\bar{u}(x))^p \int_{B_r} \frac{|\bar{\eta}(x)-\bar{\eta}(y)|^p}{|x-y|^{n+sp}} \,\mathrm{d}y\,\mathrm{d}x + C \left( \frac{r}{r-\rho} \right)^p r^{-sp} \|F(\bar{u})\|_{L^p(B_r)}^p,
\end{split}
\end{equation*}
it follows from \eqref{eq-Caccio1} that
\begin{equation*}
\begin{split}
L
&\leq C\left( 1+\gamma^{p-1}+\left( \frac{\gamma}{\beta_0} \right)^p \right) \int_{B_r} F(\bar{u}(x))^p \int_{B_r} \frac{|\bar{\eta}(x)-\bar{\eta}(y)|^p}{|x-y|^{n+sp}} \,\mathrm{d}y\,\mathrm{d}x \\
&\quad + C\left( 1+\gamma^{\frac{2sp(p-1)}{\varepsilon}}+\left( \frac{\gamma}{\beta_0} \right)^p \right) \left( \frac{r}{r-\rho} \right)^p r^{-sp} \|F(\bar{u})\|_{L^p(B_r)}^p \\
&\quad + C \frac{\gamma}{\beta_0} \left( \sup_{x \in B_{(r+\rho)/2}} \int_{\mathbb{R}^n \setminus B_r} \frac{\bar{u}^{p-1}(y)}{|x-y|^{n+sp}} \,\mathrm{d}y \right) \int_{B_r} F(\bar{u}) F'(\bar{u})^{p-1} \,\mathrm{d}x,
\end{split}
\end{equation*}
where $C=C(n, s, p, \Lambda, \varepsilon, R^{\varepsilon}\|b_1\|_{L^{n/(sp-\varepsilon)}(B_R)})>0$.

If $x \in B_{(r+\rho)/2}(x_0)$ and $y \in \mathbb{R}^n \setminus B_r(x_0)$, then $|x-y| \geq \frac{r-\rho}{2r}|y-x_0|$ and hence
\begin{equation*}
\sup_{x \in B_{(r+\rho)/2}} \int_{\mathbb{R}^n \setminus B_r} \frac{\bar{u}^{p-1}(y)}{|x-y|^{n+sp}} \,\mathrm{d}y \leq \frac{C}{r^{sp}} \left( \frac{r}{r-\rho} \right)^{n+sp} \mathrm{Tail}^{p-1}(\bar{u}; x_0, r)
\end{equation*}
for some $C=C(n, s, p)>0$. Moreover, since $F'(t) \leq \frac{\gamma}{p}\frac{F(t)}{d}$, we get that
\begin{equation*}
\int_{B_r} F(\bar{u}) F'(\bar{u})^{p-1} \bar{\eta}^p \,\mathrm{d}x \leq \left( \frac{\gamma}{pd} \right)^{p-1} \int_{B_r} F(\bar{u})^p \,\mathrm{d}x.
\end{equation*}
The desired estimate \eqref{eq-Caccio2} now follows by combining all the estimates above.
\end{proof}

Note that Lemmas~\ref{lem-Caccio1} and \ref{lem-Caccio2} are Moser-type Caccioppoli estimates. We also need the following De Giorgi-type Caccioppoli estimate.

\begin{lemma}\label{lem-Caccio3}
Let $E$ be a relatively closed set in $\Omega$ with measure zero and let $\bar{\eta}$ be a nonnegative smooth function vanishing in a neighborhood of $E$. Let $B_R=B_R(x_0)\subset \Omega$ and let $\eta \in C^\infty_c(B_R)$ be a nonnegative function. If $u$ is a weak subsolution of \eqref{eq-main} in $B_R \setminus E$, then
\begin{equation}\label{eq-Caccio3}
\begin{split}
&\int_{B_R} \int_{B_R} \frac{|w_+(x)-w_+(y)|^p}{|x-y|^{n+sp}} (\eta\bar{\eta})^p(x) \,\mathrm{d}y\,\mathrm{d}x + \int_{B_R} w_+(x) (\eta \bar{\eta})^p(x) \int_{\mathbb{R}^n} \frac{w_-^{p-1}(y)}{|x-y|^{n+sp}} \,\mathrm{d}y \,\mathrm{d}x \\
&\leq C\int_{B_R} w_+^p(x) \int_{B_R} \frac{|(\eta\bar{\eta})(x)-(\eta\bar{\eta})(y)|^p}{|x-y|^{n+sp}} \,\mathrm{d}y \,\mathrm{d}x + C R^{-sp} \|w_+\eta\bar{\eta}\|_{L^p(B_R)}^p \\
&\quad + C \left( R^{\frac{\varepsilon}{p-1}}\|b_2\|^{\frac{p}{p-1}}_{L^{n/(sp-\varepsilon)}(B_R)} + \frac{|k|^p}{R^{\varepsilon}} \|\eta\bar{\eta}\|_{L^\infty(B_R)}^p \right) |A^+(k; x_0, R)|^{1-\frac{sp-\varepsilon}{n}} \\
&\quad + C \left( \sup_{x \in \mathrm{supp} \, \eta} \int_{\mathbb{R}^n \setminus B_R(x_0)} \frac{w_+^{p-1}(y)}{|x-y|^{n+sp}} \,\mathrm{d}y \right) \int_{B_R(x_0)} w_+(x)(\eta\bar{\eta})^p(x) \,\mathrm{d}x,
\end{split}
\end{equation}
where $w_{\pm}=(u-k)_{\pm}$, $A^+(k; x_0, R)=B_R(x_0) \cap \{u>k\}$ and $C$ is a constant depending only on $n$, $s$, $p$, $\Lambda$, $\varepsilon$ and $R^\varepsilon\|b_1\|_{L^{n/(sp-\varepsilon)}(B_R(x_0))}$.
\end{lemma}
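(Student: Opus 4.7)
The plan is to use $\varphi = w_+(\eta\bar{\eta})^p$ as a test function in the weak subsolution inequality. Since $\bar{\eta}$ vanishes in a neighborhood of $E$ and $\eta \in C^\infty_c(B_R)$, an argument in the spirit of Lemma~\ref{lem-test-prod} shows that $\varphi$ is nonnegative, lies in $W^{s,p}_{\mathrm{loc}}(B_R\setminus E)$, and has compact support in $B_R\setminus E$. By Proposition~\ref{prop-test}, applied with $\Omega$ replaced by $B_R\setminus E$, we therefore obtain
\[\mathcal{E}(u,\varphi)+\int_{B_R} b(x,u(x))\varphi(x)\,\mathrm{d}x\leq 0.\]
The remainder of the proof consists of bounding $\mathcal{E}(u,\varphi)$ from below by the LHS of \eqref{eq-Caccio3} modulo admissible errors and absorbing the $b$-contribution into the RHS.

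Next, decompose $\mathcal{E}(u,\varphi)=\mathcal{E}_{\mathrm{loc}}+2\mathcal{E}_{\mathrm{tail}}$, where $\mathcal{E}_{\mathrm{loc}}$ is the double integral over $B_R\times B_R$ and $\mathcal{E}_{\mathrm{tail}}$ the one over $B_R\times(\mathbb{R}^n\setminus B_R)$. For $\mathcal{E}_{\mathrm{loc}}$, I would apply the De Giorgi-type algebraic inequality
\[|a-b|^{p-2}(a-b)(a_+\eta_1^p-b_+\eta_2^p)\geq c|a_+-b_+|^p\max(\eta_1,\eta_2)^p-C\max(a_+,b_+)^p|\eta_1-\eta_2|^p\]
(cf.\ Di Castro--Kuusi--Palatucci~\cite{DCKP14} and Cozzi~\cite{Coz17}) with $a=u(x)-k$, $b=u(y)-k$, $\eta_1=(\eta\bar{\eta})(x)$, and $\eta_2=(\eta\bar{\eta})(y)$; together with the ellipticity \eqref{eq-ellipticity} this produces the first LHS term of \eqref{eq-Caccio3} modulo the first RHS term. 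For $\mathcal{E}_{\mathrm{tail}}$, I would split the $y$-integration by whether $u(y)\leq k$ or $u(y)>k$. When $u(y)\leq k$, the identity $u(x)-u(y)=w_+(x)+w_-(y)\geq 0$ on $\{w_+(x)>0\}$, combined with the elementary inequality $(a+b)^{p-1}\geq c(a^{p-1}+b^{p-1})$ for $a,b\geq 0$, extracts the second LHS term of \eqref{eq-Caccio3} and a remainder absorbable into $R^{-sp}\|w_+\eta\bar{\eta}\|_{L^p(B_R)}^p$. When $u(y)>k$, the bound $|u(x)-u(y)|^{p-1}\leq C(w_+(x)^{p-1}+w_+(y)^{p-1})$ yields a contribution controlled by the fourth RHS term via the sup-tail quantity.

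To handle the $b$-term, the structure condition \eqref{eq-str} and $|u|\leq w_++|k|$ on $A^+$ reduce matters to estimating
\[\int_{A^+}\bigl(b_1 w_+^p+b_1|k|^{p-1}w_++b_2 w_+\bigr)(\eta\bar{\eta})^p\,\mathrm{d}x.\]
The $b_1 w_+^p$ integral is absorbed into the first two LHS terms of \eqref{eq-Caccio3} via Hölder's inequality and Theorem~\ref{thm-Sobolev}, exactly as in the proof of Lemma~\ref{lem-Caccio1}, passing through the auxiliary space $W^{\tilde{s},\tilde{p}}$ of \eqref{eq-sp} and Lemma~\ref{lem-Jensen} when $sp=n$. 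Each of the remaining two integrals is split by Young's inequality (with the scaling parameter in the second chosen proportional to $R^{\varepsilon/p}$) into an absorbable $w_+^p$ contribution and a "pure" piece, which is then estimated by Hölder's inequality with exponent $n/(sp-\varepsilon)$, producing the factor $|A^+|^{1-(sp-\varepsilon)/n}$. The coefficient $\|b_1\|_{L^{n/(sp-\varepsilon)}(B_R)}$ that arises from the $|k|^{p-1}w_+$ part is rewritten as $R^{-\varepsilon}(R^{\varepsilon}\|b_1\|_{L^{n/(sp-\varepsilon)}(B_R)})$ and absorbed into the constant $C$, producing the third RHS term of \eqref{eq-Caccio3}.

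The main obstacle I anticipate is the careful bookkeeping of Hölder, Young, and Sobolev inequalities needed to reach precisely the exponent $1-(sp-\varepsilon)/n$ on $|A^+|$ together with the correct $R^{\varepsilon/(p-1)}$ and $R^{-\varepsilon}$ prefactors. A secondary subtlety is the critical case $sp=n$, where the Sobolev exponent degenerates and one must detour through the auxiliary space $W^{\tilde{s},\tilde{p}}$, as in Lemma~\ref{lem-Caccio1}.
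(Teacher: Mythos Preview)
Your proposal follows the same route as the paper: test with $\varphi = w_+(\eta\bar{\eta})^p$, bound $\mathcal{E}(u,\varphi)$ from below via the algebraic inequalities underlying \cite[Proposition~8.5]{Coz17} (keeping the cut-off $\eta\bar{\eta}$), and absorb the $b$-contribution through H\"older, Sobolev, and Young. The treatment of the local energy, the tail split, and the $b_1w_+^p$ and $b_1|k|^{p-1}w_+$ pieces all agree with the paper.

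The one place your sketch would not close as written is the $b_2 w_+$ integral. Applying Young \emph{before} H\"older, as you describe, leaves a ``pure'' piece $\int_{A^+} b_2^{p/(p-1)}\,\mathrm{d}x$; but under the sole assumption $b_2 \in L^{n/(sp-\varepsilon)}$ this need not even be finite (it fails whenever $\tfrac{p}{p-1} > \tfrac{n}{sp-\varepsilon}$, e.g.\ for $sp=n$ and small $\varepsilon$), and when it is, H\"older gives the exponent $1-\tfrac{p(sp-\varepsilon)}{n(p-1)}$ on $|A^+|$, not $1-\tfrac{sp-\varepsilon}{n}$. The paper reverses the order: first H\"older, pairing $b_2$ with $\|w_+\eta\bar{\eta}\|_{L^{p_s^*}}$ and a power of $|A^+|$; then the Sobolev inequality converts $\|w_+\eta\bar{\eta}\|_{L^{p_s^*}}$ into $[w_+\eta\bar{\eta}]_{W^{s,p}}+R^{-s}\|w_+\eta\bar{\eta}\|_{L^p}$; and only then Young, so that the absorbable term is a \emph{seminorm} contribution $\delta[w_+\eta\bar{\eta}]_{W^{s,p}}^p$ (which goes back into $I_1+I_2$ via $[w_+\eta\bar{\eta}]_{W^{s,p}}^p \leq 2^{p-1}(I_1+I_2)$) rather than merely an $L^p$ term. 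The resulting exponent $\bigl(1-\tfrac{sp-\varepsilon}{n}-\tfrac{1}{p_s^*}\bigr)\tfrac{p}{p-1}$ on $|A^+|$ is then checked to exceed $1-\tfrac{sp-\varepsilon}{n}$, and the surplus is traded against $|A^+|\leq|B_R|$ to produce the $R^{\varepsilon/(p-1)}$ prefactor. This is precisely the ``careful bookkeeping'' you flagged as the main obstacle; the fix is the order of operations, not a new idea.
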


\begin{proof}
Let us write $B_R=B_R(x_0)$ and $A^+_R=A^+(k; x_0, R)$ for simplicity. By testing the equation \eqref{eq-main} with $\varphi\coloneqq w_+(\eta\bar{\eta})^p$, we have
\begin{equation}\label{eq-test}
\mathcal{E}(u, \varphi)\leq -\int_{B_R} b(x, u(x)) \varphi(x) \,\mathrm{d}x.
\end{equation}
We basically follow the lines of proof of Proposition~8.5 in Cozzi~\cite{Coz17} to estimate the left-hand side of \eqref{eq-test}, but we keep the cut-off functions $\eta$ and $\bar{\eta}$. Indeed, a slight modification of \cite[(8.4)--(8.6)~and~(8.10)--(8.12)]{Coz17} shows that
\begin{equation*}
\begin{split}
    \mathcal{E}(u, \varphi)
    &\geq C_1 \left( I_1 + \int_{B_R} w_+(x) (\eta \bar{\eta})^p(x) \int_{\mathbb{R}^n} \frac{w_-^{p-1}(y)}{|x-y|^{n+sp}} \,\mathrm{d}y \,\mathrm{d}x \right) \\
    &\quad - C_2 \left( I_2 + \int_{B_R} w_+(x)(\eta\bar{\eta})^p(x) \int_{\mathbb{R}^n \setminus B_R} \frac{w_+^{p-1}(y)}{|x-y|^{n+sp}} \,\mathrm{d}y \,\mathrm{d}x \right)
\end{split}
\end{equation*}
for some positive constants $C_1$ and $C_2$ depending only on $n$, $s$, $p$ and $\Lambda$, where
\begin{equation*}
\begin{split}
I_1 &= \int_{B_R} \int_{B_R} \frac{|w_+(x)-w_+(y)|^p}{|x-y|^{n+sp}} (\eta\bar{\eta})^p(x) \,\mathrm{d}y\,\mathrm{d}x \quad\text{and} \\
I_2 &= \int_{B_R} w_+^p(x) \int_{B_R} \frac{|(\eta\bar{\eta})(x)-(\eta\bar{\eta})(y)|^p}{|x-y|^{n+sp}} \,\mathrm{d}y \,\mathrm{d}x.
\end{split}
\end{equation*}
On the other hand, by observing that $|u| \leq w_++|k|$ on $A^+_R$ and using Young's inequality, we obtain that
\begin{equation*}
\begin{split}
-\int_{B_R} b(x, u(x)) \varphi(x) \,\mathrm{d}x
&\leq \int_{A^+_R} (b_1|u|^{p-1} + b_2) w_+ (\eta\bar{\eta})^p \,\mathrm{d}x \\
&\leq C \int_{A^+_R} (b_1 w_+^{p-1} + b_1 |k|^{p-1} + b_2) w_+ (\eta\bar{\eta})^p \,\mathrm{d}x \\
&\leq C \int_{A^+_R} (b_1 w_+^{p} + b_1 |k|^p + b_2 w_+)(\eta\bar{\eta})^p \,\mathrm{d}x,
\end{split}
\end{equation*}
where $C=C(p)>0$. Moreover, we have that
\begin{equation*}
\int_{A^+_R} b_1 |k|^p (\eta\bar{\eta})^p \,\mathrm{d}x \leq |k|^p \|\eta\bar{\eta}\|_{L^\infty(B_R)}^p |A^+_R|^{1-\frac{sp-\varepsilon}{n}} \|b_1\|_{L^{\frac{n}{sp-\varepsilon}}(B_R)}.
\end{equation*}
Thus, combining all the estimates above shows that
\begin{equation}\label{eq-DG1}
\begin{split}
&I_1 + \int_{B_R} w_+(x) (\eta \bar{\eta})^p(x) \int_{\mathbb{R}^n} \frac{w_-^{p-1}(y)}{|x-y|^{n+sp}} \,\mathrm{d}y \,\mathrm{d}x \\
&\leq CI_2 + C\int_{B_R} w_+(x)(\eta\bar{\eta})^p(x) \int_{\mathbb{R}^n \setminus B_R} \frac{w_+^{p-1}(y)}{|x-y|^{n+sp}} \,\mathrm{d}y \,\mathrm{d}x \\
&\quad + C\frac{|k|^p}{R^{\varepsilon}} \|\eta\bar{\eta}\|_{L^\infty(B_R)}^p |A^+_R|^{1-\frac{sp-\varepsilon}{n}} + C\int_{A^+_R} (b_1 w_+^{p} + b_2 w_+)(\eta\bar{\eta})^p \,\mathrm{d}x
\end{split}
\end{equation}
for some $C=C(n, s, p, \Lambda, R^\varepsilon \|b_1\|_{L^{n/(sp-\varepsilon)}(B_R)})>0$.

Let us now estimate the last integral in \eqref{eq-DG1}. Suppose that $sp < n$ and let $\delta \in (0,1)$. By using H\"older's inequality, Theorem~\ref{thm-Sobolev} and Young's inequality, we have that
\begin{equation*}
\begin{split}
\int_{A^+_R} b_1 (w_+\eta\bar{\eta})^{p} \,\mathrm{d}x
&\leq \|b_1\|_{L^{n/(sp-\varepsilon)}(B_R)} \|w_+\eta\bar{\eta}\|_{L^p(B_R)}^{\varepsilon/s} \|w_+\eta\bar{\eta}\|_{L^{p^{\ast}_{s}}(B_R)}^{p-\varepsilon/s} \\
&\leq C \left( R^{-sp} \|w_+\eta\bar{\eta}\|_{L^p(B_R)}^p \right)^{\frac{\varepsilon}{sp}} \left( [w_+\eta\bar{\eta}]_{W^{s, p}(B_R)}^p + R^{-sp} \|w_+\eta\bar{\eta}\|_{L^p(B_R)}^p \right)^{1-\frac{\varepsilon}{sp}} \\
&\leq \delta [w_+\eta\bar{\eta}]_{W^{s, p}(B_R)}^p + C \delta^{1-\frac{sp}{\varepsilon}} R^{-sp} \|w_+\eta\bar{\eta}\|_{L^p(B_R)}^p
\end{split}
\end{equation*}
for some $C=C(n, s, p, \varepsilon, R^\varepsilon \|b_1\|_{L^{n/(sp-\varepsilon)}(B_R)})>0$. Moreover, we utilize H\"older's inequality, Theorem~\ref{thm-Sobolev} and Young's inequality again, and use
\begin{equation*}
    \left(1-\frac{sp-\varepsilon}{n}-\frac{1}{p_s^{\ast}}\right)\frac{p}{p-1}>1-\frac{sp-\varepsilon}{n}
\end{equation*}
to obtain that
\begin{equation*}
\begin{split}
\int_{A^+_R} b_2 w_+ (\eta\bar{\eta})^p \,\mathrm{d}x
&\leq \|b_2\|_{L^{\frac{n}{sp-\varepsilon}}(B_R)} \|w_+\eta\bar{\eta}\|_{L^{p^{\ast}_{s}}(B_R)}|A^+_R|^{1-\frac{sp-\varepsilon}{n}-\frac{1}{p_s^{\ast}}}\\
&\leq \|b_2\|_{L^{\frac{n}{sp-\varepsilon}}(B_R)} \left( [w_+\eta\bar{\eta}]_{W^{s,p}(B_R)}+R^{-s}\|w_+\eta\bar{\eta}\|_{L^{p}(B_R)}\right) |A^+_R|^{1-\frac{sp-\varepsilon}{n}-\frac{1}{p_s^{\ast}}} \\
&\leq \delta [w_+\eta\bar{\eta}]_{W^{s, p}(B_R)}^p + \delta R^{-sp} \|w_+\eta\bar{\eta}\|^p_{L^{p}(B_R)}\\
&\quad + C\delta^{-\frac{1}{p-1}} \|b_2\|^{\frac{p}{p-1}}_{L^{n/(sp-\varepsilon)}(B_R)} R^{\left(n-sp+\varepsilon-\frac{n}{p_s^{\ast}}\right)\frac{p}{p-1}} \left(\frac{|A^+_R|}{|B_R|}\right)^{1-\frac{sp-\varepsilon}{n}}\\
&\leq \delta [w_+\eta\bar{\eta}]_{W^{s, p}(B_R)}^p + R^{-sp} \|w_+\eta\bar{\eta}\|^p_{L^{p}(B_R)} \\
&\quad+ C\delta^{-\frac{1}{p-1}} R^{\frac{\varepsilon}{p-1}}\|b_2\|^{\frac{p}{p-1}}_{L^{n/(sp-\varepsilon)}(B_R)}|A^+_R|^{1-\frac{sp-\varepsilon}{n}},
\end{split}
\end{equation*}
where $C=C(n, s, p, \varepsilon)>0$. Therefore, we arrive at
\begin{equation*}
\begin{split}
&I_1 + \int_{B_R} w_+(x) (\eta \bar{\eta})^p(x) \int_{\mathbb{R}^n} \frac{w_-^{p-1}(y)}{|x-y|^{n+sp}} \,\mathrm{d}y \,\mathrm{d}x \\
&\leq CI_2 + C \int_{B_R} w_+(x)(\eta\bar{\eta})^p(x) \int_{\mathbb{R}^n \setminus B_R} \frac{w_+^{p-1}(y)}{|x-y|^{n+sp}} \,\mathrm{d}y \,\mathrm{d}x \\
&\quad + 2C\delta [w_+\eta\bar{\eta}]_{W^{s, p}(B_R)}^p + C \left( \delta^{1-\frac{sp}{\varepsilon}} + 1 \right) R^{-sp} \|w_+\eta\bar{\eta}\|_{L^p(B_R)}^p \\
&\quad + C \left( \delta^{-\frac{1}{p-1}} R^{\frac{\varepsilon}{p-1}}\|b_2\|^{\frac{p}{p-1}}_{L^{n/(sp-\varepsilon)}(B_R)} + \frac{|k|^p}{R^{\varepsilon}} \|\eta\bar{\eta}\|_{L^\infty(B_R)}^p \right) |A^+_R|^{1-\frac{sp-\varepsilon}{n}}.
\end{split}
\end{equation*}
By using $[w_+\eta\bar{\eta}]_{W^{s, p}(B_R)}^p \leq 2^{p-1} (I_1+I_2)$ and taking $\delta$ sufficiently small so that $2^{p+1}C\delta<1$, we conclude \eqref{eq-Caccio3} in the subcritical case $sp<n$.

Next, we suppose that $sp=n$. We fix $\tilde{s}$ and $\tilde{p}$ as in \eqref{eq-sp} so that $\tilde{p}<p$ and $\tilde{s}\tilde{p}<n$. As in the subcritical case, we obtain that
\begin{equation*}
\begin{split}
\int_{A^+_R} b_1 (w_+\eta\bar{\eta})^{p} \,\mathrm{d}x
&\leq \|b_1\|_{L^{n/(n-\varepsilon)}(B_R)} \|w_+\eta\bar{\eta}\|_{L^{\tilde{p}}(B_R)}^{\varepsilon/(2\tilde{s})} \|w_+\eta\bar{\eta}\|_{L^{\tilde{p}^{\ast}_{\tilde{s}}}(B_R)}^{p-\varepsilon/(2\tilde{s})} \\
&\leq C \left( R^{-sp} \|w_+\eta\bar{\eta}\|_{L^p(B_R)}^p \right)^{\frac{\varepsilon}{2\tilde{s}p}} \left( [w_+\eta\bar{\eta}]_{W^{s, p}(B_R)}^p + R^{-sp} \|w_+\eta\bar{\eta}\|_{L^p(B_R)}^p \right)^{1-\frac{\varepsilon}{2\tilde{s}p}} \\
&\leq \delta [w_+\eta\bar{\eta}]_{W^{s, p}(B_R)}^p + C \delta^{1-\frac{2\tilde{s}p}{\varepsilon}} R^{-sp} \|w_+\eta\bar{\eta}\|_{L^p(B_R)}^p
\end{split}
\end{equation*}
and that
\begin{equation*}
\begin{split}
\int_{A^+_R} b_2 w_+ (\eta\bar{\eta})^p \,\mathrm{d}x
&\leq \|b_2\|_{L^{\frac{n}{n-\varepsilon}}(B_R)} \|w_+\eta\bar{\eta}\|_{L^{\tilde{p}^{\ast}_{\tilde{s}}}(B_R)}|A^+_R|^{\frac{\varepsilon}{n}-\frac{1}{\tilde{p}_{\tilde{s}}^{\ast}}} \\
&\leq R^{\frac{\varepsilon}{2p}} \|b_2\|_{L^{\frac{n}{n-\varepsilon}}(B_R)} \left( [w_+\eta\bar{\eta}]_{W^{s,p}(B_R)}+R^{-s}\|w_+\eta\bar{\eta}\|_{L^{p}(B_R)}\right) |A^+_R|^{\frac{\varepsilon}{n}-\frac{1}{\tilde{p}_{\tilde{s}}^{\ast}}} \\
&\leq \delta [w_+\eta\bar{\eta}]_{W^{s, p}(B_R)}^p + \delta R^{-sp} \|w_+\eta\bar{\eta}\|^p_{L^{p}(B_R)}\\
&\quad + C\delta^{-\frac{1}{p-1}} \|b_2\|^{\frac{p}{p-1}}_{L^{n/(n-\varepsilon)}(B_R)} R^{\frac{\varepsilon}{2p} \frac{p}{p-1} + \left(\varepsilon-\frac{n-\tilde{s}\tilde{p}}{\tilde{p}}\right)\frac{p}{p-1}} \left(\frac{|A^+_R|}{|B_R|}\right)^{\frac{\varepsilon}{n}}\\
&\leq \delta [w_+\eta\bar{\eta}]_{W^{s, p}(B_R)}^p + R^{-sp} \|w_+\eta\bar{\eta}\|^p_{L^{p}(B_R)} \\
&\quad+ C\delta^{-\frac{1}{p-1}} R^{\frac{\varepsilon}{p-1}}\|b_2\|^{\frac{p}{p-1}}_{L^{n/(n-\varepsilon)}(B_R)}|A^+_R|^{\frac{\varepsilon}{n}}
\end{split}
\end{equation*}
for some $C=C(n, s, p, \varepsilon, R^\varepsilon \|b_1\|_{L^{n/(n-\varepsilon)}(B_R)})>0$, where we also used
\begin{equation*}
\left( \frac{\varepsilon}{n} - \frac{1}{\tilde{p}_{\tilde{s}}^{\ast}} \right) \frac{p}{p-1} > \frac{\varepsilon}{n}.
\end{equation*}
The rest of the proof goes the same way as before.
\end{proof}

By taking $\eta \in C^{\infty}_c(B_{(R+r)/2}(x_0))$ with $\eta=1$ on $B_r(x_0)$, $0 \leq \eta \leq 1$ in $\mathbb{R}^n$ and $|\nabla \eta|\leq 2/(R-r)$, and proceeding as in the proof of Lemma~\ref{lem-Caccio2}, we obtain the following result.

\begin{lemma}\label{lem-Caccio4}
Let $E$ be a relatively closed set in $\Omega$ with measure zero and let $\bar{\eta}$ be a nonnegative smooth function such that $\bar{\eta}$ vanishes in a neighborhood of $E$ and $0 \leq \bar{\eta} \leq 1$. Let $B_r(x_0) \Subset B_R(x_0) \subset \Omega$. If $u$ is a weak subsolution of \eqref{eq-main} in $B_R \setminus E$, then
\begin{equation*}
\begin{split}
&\int_{B_r(x_0)} \int_{B_r(x_0)} \frac{|w_+(x)-w_+(y)|^p}{|x-y|^{n+sp}} \bar{\eta}^p(x) \,\mathrm{d}y\,\mathrm{d}x + \int_{B_r(x_0)} w_+(x) \bar{\eta}^p(x) \int_{\mathbb{R}^n} \frac{w_-^{p-1}(y)}{|x-y|^{n+sp}} \,\mathrm{d}y \,\mathrm{d}x \\
&\leq C\int_{B_R(x_0)} w_+^p(x) \int_{B_R(x_0)} \frac{|\bar{\eta}(x)-\bar{\eta}(y)|^p}{|x-y|^{n+sp}} \,\mathrm{d}y \,\mathrm{d}x + C \left( \frac{R}{R-r} \right)^p R^{-sp} \|w_+\|_{L^p(B_R(x_0))}^p \\
&\quad + C \left( R^{\frac{\varepsilon}{p-1}}\|b_2\|^{\frac{p}{p-1}}_{L^{n/(sp-\varepsilon)}(B_R(x_0))} + \frac{|k|^p}{R^{\varepsilon}} \right) |A^+(k; x_0, R)|^{1-\frac{sp-\varepsilon}{n}} \\
&\quad + C \left( \frac{R}{R-r} \right)^{n+sp} R^{-sp} \|w_+\|_{L^1(B_R(x_0))} \mathrm{Tail}^{p-1}(w_+; x_0, R),
\end{split}
\end{equation*}
where $w_\pm=(u-k)_\pm$, $A^+(k; x_0, R)=B_R(x_0) \cap \{u>k\}$, and $C$ is a constant depending only on $n$, $s$, $p$, $\Lambda$, $\varepsilon$, $R^\varepsilon\|b_1\|_{L^{n/(sp-\varepsilon)}(B_R(x_0))}$.
\end{lemma}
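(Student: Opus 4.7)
The plan is to apply Lemma~\ref{lem-Caccio3} with a specific interpolating cut-off $\eta$ between $B_r(x_0)$ and $B_R(x_0)$ and then simplify each of the resulting RHS terms, entirely parallel to how Lemma~\ref{lem-Caccio2} is extracted from Lemma~\ref{lem-Caccio1}. Concretely, I would take $\eta\in C^\infty_c(B_{(R+r)/2}(x_0))$ with $\eta\equiv 1$ on $B_r(x_0)$, $0\le\eta\le 1$ on $\mathbb{R}^n$, and $|\nabla\eta|\le 4/(R-r)$, and feed this $\eta$ together with the given $\bar\eta$ into \eqref{eq-Caccio3}. Since the LHS of \eqref{eq-Caccio3} is a sum of nonnegative integrals with $(\eta\bar\eta)(x)=\bar\eta(x)$ for $x\in B_r(x_0)$, restricting the domains of integration from $B_R(x_0)$ down to $B_r(x_0)$ immediately reproduces the LHS of Lemma~\ref{lem-Caccio4}.

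For the first RHS term of \eqref{eq-Caccio3}, I would use the product-splitting
\[
|(\eta\bar\eta)(x) - (\eta\bar\eta)(y)| \le \eta(x)\,|\bar\eta(x)-\bar\eta(y)| + \bar\eta(y)\,|\eta(x)-\eta(y)|,
\]
bound both prefactors by $1$, and estimate the resulting $|\eta(x)-\eta(y)|^p$ contribution via the standard dyadic estimate
\[
\int_{B_R(x_0)} \frac{|\eta(x)-\eta(y)|^p}{|x-y|^{n+sp}}\,\mathrm{d}y \le C(R-r)^{-sp},
\]
obtained by splitting at $|x-y|=(R-r)/4$ and using $|\nabla\eta|\le 4/(R-r)$ on the small piece and $|\eta|\le 1$ on the far piece. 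Writing $(R-r)^{-sp} = (R/(R-r))^{sp} R^{-sp}$ and using $sp\le p$ with $R/(R-r)\ge 1$ reconciles the exponent with the one stated in Lemma~\ref{lem-Caccio4}. The remaining contributions $CR^{-sp}\|w_+\eta\bar\eta\|_{L^p(B_R)}^p$ and the $|A^+|$-coefficient $|k|^p R^{-\varepsilon}\|\eta\bar\eta\|_{L^\infty}^p$ from \eqref{eq-Caccio3} are absorbed directly into the corresponding RHS terms of Lemma~\ref{lem-Caccio4} by means of $0\le\eta\bar\eta\le 1$.

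For the nonlocal tail contribution, I would observe that for any $x\in\mathrm{supp}\,\eta\subset\overline{B_{(R+r)/2}(x_0)}$ and $y\in\mathbb{R}^n\setminus B_R(x_0)$, one has $|y-x_0|\le (2R/(R-r))\,|x-y|$, which upgrades the argument of Lemma~\ref{lem-tail} to the pointwise bound
\[
\sup_{x\in\mathrm{supp}\,\eta}\int_{\mathbb{R}^n\setminus B_R(x_0)}\frac{w_+^{p-1}(y)}{|x-y|^{n+sp}}\,\mathrm{d}y \le C\Bigl(\frac{R}{R-r}\Bigr)^{n+sp} R^{-sp}\,\mathrm{Tail}^{p-1}(w_+; x_0, R).
\]
Multiplying this by $\int_{B_R(x_0)} w_+(x)(\eta\bar\eta)^p(x)\,\mathrm{d}x \le \|w_+\|_{L^1(B_R(x_0))}$ produces the last RHS term of Lemma~\ref{lem-Caccio4}, and combining all the estimates completes the argument.

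I do not anticipate a genuine obstacle: the argument is routine bookkeeping around \eqref{eq-Caccio3}, and every pointwise and measure-theoretic input has already been isolated in either Lemma~\ref{lem-Caccio3} or Lemma~\ref{lem-tail}. The only minor subtlety is that the cut-off estimate naturally produces an exponent $sp$ on $R/(R-r)$, which is weakened to the stated exponent $p$ using $0<s\le 1$.
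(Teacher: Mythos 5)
Your proof is correct and takes essentially the same approach as the paper, which simply states that Lemma~\ref{lem-Caccio4} follows by feeding a cut-off $\eta\in C^\infty_c(B_{(R+r)/2}(x_0))$ into Lemma~\ref{lem-Caccio3} and proceeding as in the derivation of Lemma~\ref{lem-Caccio2} from Lemma~\ref{lem-Caccio1}. Your additional observation about reconciling the exponent $sp$ with the stated $p$ via $sp\le p$ and $R/(R-r)\ge 1$ is a correct account of a point the paper leaves implicit.
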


If $u$ is a weak supersolution of \eqref{eq-main} in $B_R \setminus E$, where $E$ is given as in Lemma~\ref{lem-Caccio4}, then $v=-u$ is a weak subsolution of $\mathcal{L}v+\tilde{b}(x, v)=0$ in $B_R \setminus E$ with $\tilde{b}(x, z)=-b(x, -z)$. Since $\tilde{b}$ satisfies the same structure condition
\begin{equation*}
    |\tilde{b}(x, z)| \leq b_1(x) |z|^{p-1} + b_2(x),
\end{equation*}
an application of Lemma~\ref{lem-Caccio4} to $v$ shows that
\begin{equation}\label{eq-Caccio-super}
\begin{split}
&\int_{B_r(x_0)} \int_{B_r(x_0)} \frac{|w_-(x)-w_-(y)|^p}{|x-y|^{n+sp}} \bar{\eta}^p(x) \,\mathrm{d}y\,\mathrm{d}x + \int_{B_r(x_0)} w_-(x) \bar{\eta}^p(x) \int_{\mathbb{R}^n} \frac{w_+^{p-1}(y)}{|x-y|^{n+sp}} \,\mathrm{d}y \,\mathrm{d}x \\
&\leq C\int_{B_R(x_0)} w_-^p(x) \int_{B_R(x_0)} \frac{|\bar{\eta}(x)-\bar{\eta}(y)|^p}{|x-y|^{n+sp}} \,\mathrm{d}y \,\mathrm{d}x + C \left( \frac{R}{R-r} \right)^p R^{-sp} \|w_-\|_{L^p(B_R(x_0))}^p \\
&\quad + C \left( R^{\frac{\varepsilon}{p-1}}\|b_2\|^{\frac{p}{p-1}}_{L^{n/(sp-\varepsilon)}(B_R(x_0))} + \frac{|k|^p}{R^{\varepsilon}} \right) |A^-(k; x_0, R)|^{1-\frac{sp-\varepsilon}{n}} \\
&\quad + C \left( \frac{R}{R-r} \right)^{n+sp} R^{-sp} \|w_-\|_{L^1(B_R(x_0))} \mathrm{Tail}^{p-1}(w_-; x_0, R),
\end{split}
\end{equation}
where $A^-(k; x_0, R)=B_R(x_0) \cap \{u<k\}$.

When $E=\emptyset$ in Lemma~\ref{lem-Caccio3}, we obtain the following standard Caccioppoli estimate. It shows that any weak subsolution of \eqref{eq-main} belongs to the so-called (fractional) De Giorgi class, cf.\ Section~6.1 in Cozzi~\cite{Coz17}. Note that such Caccioppoli estimates were obtained for equations similar to \eqref{eq-main} in \cite[Proposition~8.5]{Coz17}, but the equation \eqref{eq-main} was not covered.

\begin{lemma}\label{lem-DG}
If $u$ is a weak subsolution of \eqref{eq-main} in $\Omega$, then for any $B_r(x_0) \Subset B_R(x_0) \subset \Omega$ and $k \in \mathbb{R}$,
\begin{equation*}
\begin{split}
&[w_+]_{W^{s, p}(B_r(x_0))}^p + \int_{B_r(x_0)} w_+(x) \int_{\mathbb{R}^n} \frac{w_-^{p-1}(y)}{|x-y|^{n+sp}} \,\mathrm{d}y \,\mathrm{d}x \\
&\leq + C \left( \frac{R}{R-r} \right)^p R^{-sp} \|w_+\|_{L^p(B_R(x_0))}^p \\
&\quad + C \left(R^{\frac{\varepsilon}{p-1}}\|b_2\|^{\frac{p}{p-1}}_{L^{n/(sp-\varepsilon)}(B_R(x_0))} + \frac{|k|^p}{R^{\varepsilon}} \right) |A^+(k; x_0, R)|^{1-\frac{sp-\varepsilon}{n}} \\
&\quad + C \left( \frac{R}{R-r} \right)^{n+sp} R^{-sp} \|w_+\|_{L^1(B_R(x_0))} \mathrm{Tail}^{p-1}(w_+; x_0, R)
\end{split}
\end{equation*}
where $w_{\pm}=(u-k)_{\pm}$, $A^+(k; x_0, R)=B_R(x_0) \cap \{u>k\}$ and $C>0$ is a constant depending only on $n$, $s$, $p$, $\Lambda$, $\varepsilon$ and $R^\varepsilon\|b_1\|_{L^{n/(sp-\varepsilon)}(B_R(x_0))}$.
\end{lemma}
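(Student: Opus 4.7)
The plan is to obtain Lemma~\ref{lem-DG} as a direct specialization of Lemma~\ref{lem-Caccio4}. Since the hypothesis in Lemma~\ref{lem-Caccio4} merely requires $\bar{\eta}$ to be nonnegative, smooth, bounded by $1$, and to vanish in a neighborhood of $E$, taking $E = \emptyset$ renders the vanishing condition vacuous, so the constant choice $\bar{\eta} \equiv 1$ is admissible. Substituting this into the conclusion of Lemma~\ref{lem-Caccio4} causes the cut-off factor $\bar{\eta}^p(x)$ on the left-hand side to become $1$, producing exactly the seminorm $[w_+]^p_{W^{s,p}(B_r(x_0))}$ together with the unweighted nonlocal cross-term in $w_-$. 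On the right-hand side, the integral carrying $|\bar{\eta}(x) - \bar{\eta}(y)|^p$ vanishes identically, while the three remaining terms (the local $L^p$ term with the factor $(R/(R-r))^p$, the level-set term involving $\|b_2\|$ and $|k|$, and the tail term) are inherited verbatim with constants depending on the same parameters.

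The main work—extracting the correct lower-order contribution via H\"older's inequality, the fractional Sobolev embedding (Theorem~\ref{thm-Sobolev}, with the aid of Lemma~\ref{lem-Jensen} in the critical case $sp=n$), and Young's inequality, and reconstructing the tail term via the splitting of $\mathcal{E}(u, \varphi)$ into near- and far-field pieces—has already been carried out in the proofs of Lemmas~\ref{lem-Caccio3} and \ref{lem-Caccio4}; hence there is no genuine obstacle here. Alternatively, one could give a standalone proof by testing \eqref{eq-sol} against $\varphi = w_+\eta^p$ for a scalar cut-off $\eta \in C^\infty_c(B_{(R+r)/2}(x_0))$ satisfying $\eta = 1$ on $B_r(x_0)$, $0 \leq \eta \leq 1$ and $|\nabla\eta| \lesssim 1/(R-r)$, which is a legitimate test function by Proposition~\ref{prop-test} and Lemma~\ref{lem-test-prod}; in that direct approach the factors $(R/(R-r))^p$ and $(R/(R-r))^{n+sp}$ on the right-hand side emerge from $|\eta(x) - \eta(y)|^p$ and from Lemma~\ref{lem-tail} applied to the tail integral, respectively.
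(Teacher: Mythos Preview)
Your proposal is correct and matches the paper's own approach: the paper likewise obtains Lemma~\ref{lem-DG} by repeating the proofs of Lemmas~\ref{lem-Caccio3} and \ref{lem-Caccio4} with $\bar\eta\equiv 1$, which is exactly your specialization $E=\emptyset$, $\bar\eta\equiv 1$ in Lemma~\ref{lem-Caccio4}. Your alternative standalone argument via $\varphi=w_+\eta^p$ is also valid and amounts to the same computation without the detour through the auxiliary cut-off.
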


\begin{proof}
The proof goes as in those of Lemmas~\ref{lem-Caccio3} and \ref{lem-Caccio4}, with the only difference that we take $\bar{\eta}\equiv 1$.
\end{proof}

\subsection{Interior regularity estimates}

Interior regularity estimates such as H\"older estimates and the Harnack inequality follow directly from the Caccioppoli estimate in Lemma~\ref{lem-DG} and the general regularity results for functions in the De Giorgi class.

\begin{theorem}\label{thm-holder}
Assume that $\Omega$ is bounded. If $u$ is a weak solution of \eqref{eq-main} in $\Omega$, then $u \in C^\alpha_{\mathrm{loc}}(\Omega)$ for some $\alpha \in (0,1)$.
\end{theorem}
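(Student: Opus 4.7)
The plan is to deduce Hölder regularity from the De Giorgi-type Caccioppoli estimate in Lemma~\ref{lem-DG} by invoking the general regularity theory for the fractional De Giorgi class developed in Cozzi~\cite{Coz17}. The lower-order term $b$ produces only lower-order perturbations on the right-hand side of Lemma~\ref{lem-DG}, which are exactly of the form handled in that framework, so no fundamentally new argument is needed at this stage.

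First I would fix a subdomain $\Omega'\Subset\Omega$ and show that $u$ belongs to the fractional De Giorgi class $DG^{s,p}(\Omega')$. Since $u$ is a weak solution of \eqref{eq-main}, Corollary~\ref{cor-sol} implies that $u$ is a weak subsolution, so Lemma~\ref{lem-DG} yields the correct Caccioppoli inequality for the truncations $(u-k)_+$ on all pairs $B_r(x_0)\Subset B_R(x_0)\subset\Omega'$. Moreover, since $v\coloneqq -u$ is a weak subsolution of $\mathcal{L}v+\tilde b(x,v)=0$ with $\tilde b(x,z)=-b(x,-z)$ satisfying the same structure condition \eqref{eq-str}, the same Caccioppoli estimate applies to $(u-k)_-$ as well (exactly as in the passage leading to \eqref{eq-Caccio-super}). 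Hence both $u$ and $-u$ satisfy De Giorgi-type estimates on $\Omega'$ with constants depending on $n$, $s$, $p$, $\Lambda$, $\varepsilon$ and on $R^{\varepsilon}\|b_1\|_{L^{n/(sp-\varepsilon)}(\Omega')}$, $R^{\varepsilon/(p-1)}\|b_2\|_{L^{n/(sp-\varepsilon)}(\Omega')}^{p/(p-1)}$ (which are uniformly bounded in $R\le\mathrm{diam}\,\Omega'$).

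Second, I would invoke the local boundedness and Hölder continuity results for the fractional De Giorgi class established in Cozzi~\cite[Sections~6 and 7]{Coz17}. More precisely, the local boundedness theorem gives
\begin{equation*}
\|u\|_{L^\infty(B_{r/2}(x_0))}\le C\left(\fint_{B_r(x_0)}|u|^p\,\mathrm{d}x\right)^{1/p}+C\,\mathrm{Tail}(u;x_0,r/2)+C,
\end{equation*}
for every $B_r(x_0)\Subset\Omega'$, where the additive constant absorbs the contribution of $b_2$ and of the term $|k|^p|A^+(k;x_0,R)|^{1-(sp-\varepsilon)/n}$. Once local boundedness is known, the $L^\infty$ norm of $u$ on compact subsets can be incorporated into the tail term, and the Hölder estimate for the De Giorgi class yields
\begin{equation*}
[u]_{C^\alpha(B_{r/4}(x_0))}\le C\bigl(r^{-\alpha}\|u\|_{L^\infty(B_{r/2}(x_0))}+r^{-\alpha}\mathrm{Tail}(u;x_0,r/2)+1\bigr)
\end{equation*}
for some $\alpha=\alpha(n,s,p,\Lambda,\varepsilon)\in(0,1)$. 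Since $u\in L^{p-1}_{sp}(\mathbb{R}^n)$ by Definition~\ref{def-sol}, the tail is finite, and a covering argument delivers $u\in C^\alpha_{\mathrm{loc}}(\Omega)$.

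The one point that requires care is simply checking that the nonstandard lower-order terms in Lemma~\ref{lem-DG}, namely $R^{\varepsilon/(p-1)}\|b_2\|^{p/(p-1)}_{L^{n/(sp-\varepsilon)}}|A^+(k;x_0,R)|^{1-(sp-\varepsilon)/n}$ and the contribution from $b_1$ (already absorbed into the constant), fall within the class of perturbations covered by the De Giorgi iteration of~\cite{Coz17}. These are of the form $C|A^+(k;x_0,R)|^{1+\kappa}$ with $\kappa=\varepsilon/n>0$, which is exactly the structural assumption under which the De Giorgi iteration converges and produces local boundedness and Hölder continuity. With this verification, the statement follows directly, and no new estimate beyond those already in this section is needed.
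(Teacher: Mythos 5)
Your proposal is correct and follows essentially the same route as the paper: Lemma~\ref{lem-DG} (applied to both $u$ and $-u$, the latter being a subsolution of an equation with the same structure) places $u$ in the fractional De Giorgi class $\mathrm{DG}^{s,p}$ of Cozzi~\cite[Section~6.1]{Coz17}, and the H\"older estimate then follows from \cite[Theorem~6.4]{Coz17}. The paper's proof is a one-line version of exactly this argument; your verification that the $b_1,b_2$ contributions produce perturbations of the admissible form $|A^+|^{1+\kappa}$ with $\kappa=\varepsilon/n$ is the content that the paper compresses into the parameters of the $\mathrm{DG}^{s,p}$ class in \eqref{eq-u-DG}.
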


\begin{proof}
Lemma~\ref{lem-DG} shows that
\begin{equation}\label{eq-u-DG}
u \in \mathrm{DG}^{s, p}\left(\Omega; \|b_2\|^{1/(p-1)}_{L^{n/(sp-\varepsilon)}(\Omega)}, C, -\infty, \frac{\varepsilon}{n}, \frac{\varepsilon}{p-1}, \infty \right)
\end{equation}
for some $C=C(n, s, p, \Lambda, \varepsilon, (\mathrm{diam}\,\Omega)^\varepsilon \|b_1\|_{L^{n/(sp-\varepsilon)}(\Omega)})>0$, where $\mathrm{DG}^{s, p}$ is the fractional De Giorgi class given in \cite[Section~6.1]{Coz17}. Thus, the desired result follows from \cite[Theorem~6.4]{Coz17}.
\end{proof}

Another important regularity estimate we need in this work is the Harnack inequality.

\begin{theorem}[Harnack inequality]\label{thm-harnack}
Let $u$ be a weak solution of \eqref{eq-main} in $B_R=B_R(x_0)$ such that $u \geq 0$ in $B_R$. Then for any $r \leq R/2$,
	\begin{equation*}
		\sup_{B_r}u \leq C \inf_{B_r}u + C \mathrm{Tail}(u_-; x_0, r) + C\left( R^\varepsilon \|b_2\|_{L^{n/(sp-\varepsilon)}(B_R)} \right)^{1/(p-1)},
	\end{equation*}
where $C=C(n, s, p, \Lambda, \varepsilon, R^\varepsilon\|b_1\|_{L^{n/(sp-\varepsilon)}(B_R)})>0$.
\end{theorem}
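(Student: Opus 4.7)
The plan is to reduce the Harnack inequality to the corresponding result for the fractional De Giorgi class, which has already been established in the literature. The key observation is that Lemma~\ref{lem-DG}, together with its sign-reversed counterpart, shows that any weak solution $u$ of \eqref{eq-main} automatically lies in a two-sided fractional De Giorgi class, and then a general Harnack principle for such classes closes the argument.

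First, I would apply Lemma~\ref{lem-DG} to $u$, obtaining the membership relation \eqref{eq-u-DG} (this is exactly what was used in Theorem~\ref{thm-holder}). Next I would observe that $v \coloneqq -u$ is a weak solution of $\mathcal{L}v + \tilde{b}(x,v) = 0$ in $B_R$ with $\tilde{b}(x,z) \coloneqq -b(x,-z)$; since $\tilde{b}$ satisfies \eqref{eq-str} with the same functions $b_1, b_2$, Lemma~\ref{lem-DG} applied to $v$ shows that $-u$ also belongs to the analogous De Giorgi class of subsolutions. Combining the two memberships, $u$ belongs to the two-sided fractional De Giorgi class on $B_R$ with inhomogeneity parameter $\|b_2\|_{L^{n/(sp-\varepsilon)}(B_R)}^{1/(p-1)}$ and quantitative constants depending only on $n,s,p,\Lambda,\varepsilon$ and $R^\varepsilon\|b_1\|_{L^{n/(sp-\varepsilon)}(B_R)}$.

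Now I would invoke Cozzi's Harnack inequality for nonnegative functions in the fractional De Giorgi class (cf.\ \cite[Theorem~7.5 and Remark~7.6]{Coz17}, which is the nonlocal analogue of the classical De Giorgi-type Harnack principle and precisely accommodates both a tail term and an inhomogeneity parameter). Applied to $u \geq 0$ on $B_R$ at scale $r \leq R/2$, this yields
\begin{equation*}
\sup_{B_r} u \leq C \inf_{B_r} u + C\,\mathrm{Tail}(u_-; x_0, r) + C\bigl(R^\varepsilon \|b_2\|_{L^{n/(sp-\varepsilon)}(B_R)}\bigr)^{1/(p-1)},
\end{equation*}
with $C$ depending on the advertised parameters. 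The tail of $u_-$ enters because nonnegativity of $u$ is only assumed on $B_R$, and the long-range influence of the complement is encoded, as usual, through the nonlocal tail appearing in Lemma~\ref{lem-DG}.

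The only point that requires care is bookkeeping of the scaling: the De Giorgi parameters in \eqref{eq-u-DG} involve the dimensionless quantities $\varepsilon/n$ and $\varepsilon/(p-1)$, and one must verify that the homogeneity of these exponents turns the inhomogeneity parameter $\|b_2\|_{L^{n/(sp-\varepsilon)}}^{1/(p-1)}$ into the factor $\bigl(R^\varepsilon \|b_2\|_{L^{n/(sp-\varepsilon)}(B_R)}\bigr)^{1/(p-1)}$ displayed above; this is a direct dimensional check. I expect the main (minor) obstacle to be confirming that the statement of Cozzi's Harnack inequality applies verbatim to the two-sided De Giorgi class with the precise lower-order structure arising from \eqref{eq-str}, but since the De Giorgi machinery in \cite{Coz17} is tailored to exactly this setting, no substantive new estimate beyond those already established in Section~\ref{sec-regularity} is needed.
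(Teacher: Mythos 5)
Your approach is essentially identical to the paper's: apply Lemma~\ref{lem-DG} (and its sign-reversed counterpart for $-u$) to place $u$ in the two-sided fractional De Giorgi class as in \eqref{eq-u-DG}, then invoke Cozzi's Harnack inequality for that class. The only small discrepancy is the citation into \cite{Coz17}: the Harnack inequality for the fractional De Giorgi class is Theorem~6.9, not Theorem~7.5/Remark~7.6, but the argument is otherwise the one the paper uses.
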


\begin{proof}
Since \eqref{eq-u-DG} holds with $\Omega=B_R$, the desired result follows from \cite[Theorem~6.9]{Coz17}.
\end{proof}

The following form of the Harnack inequality on annuli can be obtained by using a standard covering argument, but we provide a proof for completeness.

\begin{theorem}\label{thm-Harnack-annulus}
Assume $n \geq 2$. Let $u$ be a weak solution of \eqref{eq-main} in $B_R \setminus \{0\}$ such that $u \geq 0$ in $B_R \setminus \{0\}$. Then for any $r \leq R/2$, 
\begin{equation*}
\sup_{B_r \setminus B_{r/2}}{u} \leq C \inf_{B_r \setminus B_{r/2}}{u} + C \left(\frac{r}{R}\right)^{\frac{sp}{p-1}}\mathrm{Tail}(u_-; 0, R) + C\left(\frac{r}{R}\right)^{\frac{\varepsilon}{p-1}}\left( R^\varepsilon \|b_2\|_{L^{n/(sp-\varepsilon)}(B_R)} \right)^{\frac{1}{p-1}},
\end{equation*}
where $C=C(n, s, p, \Lambda, \varepsilon, R^\varepsilon\|b_1\|_{L^{n/(sp-\varepsilon)}(B_R)})>0$.
\end{theorem}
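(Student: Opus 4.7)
The plan is to reduce Theorem~\ref{thm-Harnack-annulus} to the standard Harnack inequality (Theorem~\ref{thm-harnack}) by a finite chain of balls argument. Since $n\geq 2$, the closed annulus $\mathcal{A}\coloneqq \overline{B_r}\setminus B_{r/2}$ is connected, so any two points $x^*,y^*\in\mathcal{A}$ can be joined by a chain of balls $B_i\coloneqq B_{r/16}(z_i)$, $i=1,\dots,N$, where $z_i\in\mathcal{A}$, $x^*\in B_1$, $y^*\in B_N$, and $B_i\cap B_{i+1}\neq\emptyset$; because the geometry of $\mathcal{A}$ only depends on $n$, the length $N$ may be chosen to depend only on $n$.

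On each $B_i$ I would apply Theorem~\ref{thm-harnack} with inner radius $r/16$ and outer radius $r/8$. The outer ball $B_{r/8}(z_i)$ lies in $B_R\setminus\{0\}$ because $r/2\leq |z_i|\leq r$ forces $B_{r/8}(z_i)\subset B_{9r/8}\setminus B_{3r/8}\subset B_R\setminus\{0\}$ (using $R\geq 2r$), and there $u\geq 0$ by hypothesis. The Harnack inequality therefore yields
\begin{equation*}
\sup_{B_i} u \leq C\inf_{B_i} u + C\,\mathrm{Tail}(u_-;z_i,r/16) + C\bigl((r/8)^\varepsilon\|b_2\|_{L^{n/(sp-\varepsilon)}(B_{r/8}(z_i))}\bigr)^{1/(p-1)},
\end{equation*}
where the constant $C$ depends only on $n,s,p,\Lambda,\varepsilon$ and $(r/8)^\varepsilon\|b_1\|_{L^{n/(sp-\varepsilon)}(B_{r/8}(z_i))}\leq R^\varepsilon\|b_1\|_{L^{n/(sp-\varepsilon)}(B_R)}$, hence on the parameters stated in the theorem.

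The key point, which is where the scaling factors $(r/R)^{sp/(p-1)}$ and $(r/R)^{\varepsilon/(p-1)}$ appear, is the bookkeeping of the two inhomogeneous terms. Since $u\geq 0$ on $B_R\setminus\{0\}$ we have $u_-\equiv 0$ on $B_R$, so only $\mathbb{R}^n\setminus B_R$ contributes to the tail at $z_i$. For $|y|\geq R\geq 2r$ and $|z_i|\leq r\leq |y|/2$, the triangle inequality gives $|y-z_i|\geq |y|/2$, which yields
\begin{equation*}
\mathrm{Tail}^{p-1}(u_-;z_i,r/16)\leq C\,(r/16)^{sp}R^{-sp}\,\mathrm{Tail}^{p-1}(u_-;0,R) = C\bigl(r/R\bigr)^{sp}\mathrm{Tail}^{p-1}(u_-;0,R).
\end{equation*}
For the $b_2$ term, monotonicity of the $L^{n/(sp-\varepsilon)}$ norm in the domain gives $(r/8)^\varepsilon\|b_2\|_{L^{n/(sp-\varepsilon)}(B_{r/8}(z_i))}\leq C(r/R)^\varepsilon R^\varepsilon\|b_2\|_{L^{n/(sp-\varepsilon)}(B_R)}$. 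Taking $(p-1)$-th roots produces the two decay factors in the stated inequality.

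Finally I would chain the Harnack estimates: denoting by $T$ the sum of the two inhomogeneous terms above (with the proper $(r/R)$-factors), for any $w_i\in B_i\cap B_{i+1}$ we get
\begin{equation*}
u(x^*)\leq \sup_{B_1}u\leq C\inf_{B_1}u+CT\leq C\,u(w_1)+CT\leq C\sup_{B_2}u+CT \leq \cdots \leq C^N u(y^*)+(C+C^2+\cdots+C^N)T.
\end{equation*}
Taking the supremum over $x^*\in\mathcal{A}$ and the infimum over $y^*\in\mathcal{A}$ produces the claimed inequality with a new constant depending on $C$ and $N$, hence only on the allowed parameters. I do not anticipate a substantial obstacle here; the only subtle point is the tail estimate, which depends crucially on the sign condition $u\geq 0$ in $B_R\setminus\{0\}$ to restrict the integration to $\mathbb{R}^n\setminus B_R$, and on choosing the chain radii small compared with $R$ so that the geometric factor $(r/R)^{sp}$ emerges naturally.
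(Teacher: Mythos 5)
Your argument matches the paper's proof in all essentials: both cover the (connected, since $n\geq 2$) annulus by a chain of small balls at scale $\eqsim r$, apply Theorem~\ref{thm-harnack} on each, use that $u_-\equiv 0$ on $B_R$ together with $|y-z_i|\geq|y|/2$ for $|y|\geq R$ to produce the $(r/R)^{sp/(p-1)}$ factor on the tail, and rescale the $b_2$ term by $B_{r/8}(z_i)\subset B_R$ to get the $(r/R)^{\varepsilon/(p-1)}$ factor, then chain through the overlapping balls. The only differences are cosmetic (ball radii $r/16$, $r/8$ versus the paper's $r/4$, $r/2$, and a slightly more explicit presentation of the chaining), so the proposal is correct.
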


\begin{proof}
We can cover $B_r \setminus B_{r/2}$ by a finite number of open balls $\{B_{r/4}(y_i)\}_{i=1}^N$ with $y_i \in A_r$ such that $0 \notin B_{r/2}(y_i)$. Note that $N$ depends only on $n$. An application of Theorem~\ref{thm-harnack} in $B_{r/4}(y_i) \subset B_{r/2}(y_i)$ shows that
\begin{equation*}
\sup_{B_{r/4}(y_i)}{u} \leq C \inf_{B_{r/4}(y_i)}{u} + C\, \mathrm{Tail}(u_-; y_i, r/4) + C \left( r^\varepsilon \|b_2\|_{L^{n/(sp-\varepsilon)}(B_{r/2}(y_i))} \right)^{1/(p-1)}.
\end{equation*}
Note that the constant $C$ is independent of $i$. Since $u \geq 0$ in $B_R \setminus \{0\}$ and
\begin{equation*}
|y| \leq |y-y_i|+|y_i| \leq |y-y_i|+r \leq 2|y-y_i| \quad\text{for all}~y \in B_R^c,
\end{equation*}
we get that
\begin{equation*}
\mathrm{Tail}^{p-1}(u_-; y_i, r/4) = \left( \frac{r}{4} \right)^{sp} \int_{\mathbb{R}^n \setminus B_R} \frac{u_-^{p-1}(y)}{|y-y_i|^{n+sp}} \,\mathrm{d}y \leq C \left( \frac{r}{R} \right)^{sp} \mathrm{Tail}^{p-1}(u_-; 0, R).
\end{equation*}
Let $j \in \{1, \dots, N\}$ be such that
\begin{equation*}
\inf_{B_r \setminus B_{r/2}}{u} \geq \inf_{B_{r/4}(y_j)}{u}.
\end{equation*}
Then, since $B_r \setminus B_{r/2}$ is connected, we obtain that
\begin{equation*}
\begin{split}
\sup_{B_r \setminus B_{r/2}}u \leq \max_{1\leq i \leq N} \sup_{B_{r/4}(y_i)}u
&\leq C^N \inf_{B_{r/4}(y_j)} {u} + C(N) \left( \frac{r}{R} \right)^{sp} \mathrm{Tail}^{p-1}(u_-; 0, R) \\
&\quad + C(N) \left(\frac{r}{R}\right)^{\frac{\varepsilon}{p-1}}\left( R^\varepsilon \|b_2\|_{L^{n/(sp-\varepsilon)}(B_R)} \right)^{1/(p-1)},
\end{split}
\end{equation*}
as desired.
\end{proof}

\section{Removable singularity}\label{sec-removable}

The aim of this section is to prove Theorem~\ref{thm-rem} and Corollary~\ref{cor-rem} concerning removable singularities.

\begin{proof}[Proof of Theorem~\ref{thm-rem}]
If $tq>n$, then it follows from Lemma~\ref{lem-cap-point} that $E$ is an empty set. Thus, we assume in the rest of the proof that $tq \leq n$. Note that $sp \leq tq \leq n$.

Once we prove that $u$ is a weak solution of \eqref{eq-main} in $\Omega$, then Theorem~\ref{thm-holder} shows that $u$ has a representative which is continuous in $\Omega$. As being a weak solution is a local property, it is enough to prove that each point $x_0 \in \Omega \cap E$ has a neighborhood in which $u$ is a weak solution. Fix $x_0 \in \Omega \cap E$ and take $R>0$ sufficiently small so that $B_{R}=B_{R}(x_0) \Subset \Omega$. We will show that $u$ is a weak solution of \eqref{eq-main} in $B_{R/4}$.

We first prove that $u \in W^{s, p}(B_{R/2})$. Since $E$ has $(t, q)$-capacity zero, so is $E \cap \overline{B}_R$. Since $E \cap \overline{B}_R$ is compact, by \eqref{eq-cap-E}, for each $j\in \mathbb{N}$, there are open sets $G_j$ and $G_j'$ such that
\begin{equation*}
E \cap \overline{B}_R \Subset G_j \Subset G_j' \Subset B_{2R} \quad\text{and}\quad \mathrm{cap}_{t, q}(G_j', B_{2R}) < 1/j.
\end{equation*}
Then $\mathrm{cap}_{t, q}(\overline{G}_j, B_{2R})<1/j$, and hence by definition there are $\eta_j' \in C^\infty_c(B_{2R})$ such that $\eta_j' \geq 1$ on $\overline{G}_j$ and $[\eta_j']_{W^{t, q}(\mathbb{R}^n)}<1/j$. Thus, there is a mollification $\eta_j \in C^\infty_c(B_{2R})$ of $\min\{\eta_j', 1\}_+$ such that $0 \leq \eta_j \leq 1$ in $\mathbb{R}^n$, $\eta_j=1$ in a neighborhood of $E \cap \overline{B}_R$ and $[\eta_j]_{W^{t, q}(\mathbb{R}^n)}<1/j$. We may also assume that $\eta_j \to 0$ a.e.\ in $\mathbb{R}^n$ after taking a subsequence if necessary. Let $\bar{\eta}_j=1-\eta_j$. Then $\bar{\eta}_j$ is a nonnegative smooth function vanishing in a neighborhood of $E \cap \overline{B}_R$ such that $[\bar{\eta}_j]_{W^{t, q}(\mathbb{R}^n)} \to 0$ and $\bar{\eta}_j \to 1$ a.e.\ in $\mathbb{R}^n$ as $j \to \infty$.

Let $R/2\leq \rho<r\leq R$, $\beta_0=\delta(p-1)>0$ and
\begin{equation*}
d=\left( R^{\varepsilon}\|b_2\|_{L^{n/(sp-\varepsilon)}(B_{R})} \right)^{1/(p-1)}+\mathrm{Tail}(u_+; x_0, R/2).
\end{equation*}
Note that $|E \cap \overline{B}_R|=0$ by Lemma~\ref{lem-measurezero}. Thus, Lemma~\ref{lem-Caccio2} with $E$ and $\bar{\eta}$ replaced by $E \cap \overline{B}_R$ and $\bar{\eta}_j$, respectively, shows that
\begin{equation*}
\begin{split}
L&\coloneqq \int_{B_\rho} \int_{B_\rho} \frac{|F(\bar{u}(x))-F(\bar{u}(y))|^p}{|x-y|^{n+sp}} \max\{ \bar{\eta}_j(x), \bar{\eta}_j(y) \}^p \,\mathrm{d}y \,\mathrm{d}x \\
&\leq C(1+\gamma^p) \int_{B_r} F(\bar{u}(x))^p \int_{B_r} \frac{|\bar{\eta}_j(x)-\bar{\eta}_j(y)|^p}{|x-y|^{n+sp}} \,\mathrm{d}y\,\mathrm{d}x \\
&\quad + \frac{C}{R^{sp}} \left( 1+\gamma^{2sp(p-1)/\varepsilon} + \gamma^p \right) \left( \frac{r}{r-\rho} \right)^{n+p} \left( 1+\frac{\mathrm{Tail}^{p-1}(\bar{u}; x_0, R/2)}{d^{p-1}} \right) \|F(\bar{u})\|_{L^p(B_r)}^p \\
&\eqqcolon I_1 + I_2,
\end{split}
\end{equation*}
where $\bar{u}=u_++d$ and $C=C(n, s, p, \Lambda, \delta, \varepsilon, R^\varepsilon\|b_1\|_{L^{n/(sp-\varepsilon)}(B_{R})})>0$.

Since $F(\bar{u})^p \leq C(p, \gamma, \gamma_0, l) \bar{u}^{\gamma_0}$ and $\gamma_0 = (1+\delta)(p-1)$, it follows from Lemma~\ref{lem-Holder} that
\begin{equation*}
I_1 \leq C r^{(t-s)p} \|\bar{u}\|_{L^{(1+\delta)\theta}(B_r)}^{\gamma_0} [\bar{\eta}_j]_{W^{t, q}(B_r)}^p,
\end{equation*}
where the constant $C$ is independent of $j$. Since $u \in L^{(1+\delta)\theta}(\Omega)$ and $[\bar{\eta}_j]_{W^{t, q}(\mathbb{R}^n)} \to 0$ as $j \to \infty$, we obtain that $\lim_{j \to \infty} {I_1}=0$. Therefore, Fatou's lemma shows that
\begin{equation}\label{eq-I2}
[F(\bar{u})]_{W^{s, p}(B_\rho)}^p \leq \liminf_{j \to \infty} {L} \leq \liminf_{j \to \infty} {(I_1+I_2)} = I_2.
\end{equation}
Let
\begin{equation*}
\chi=
\begin{cases}
n/(n-sp) &\text{if}~sp<n, \\
\text{any number larger than 1} &\text{if}~sp=n.
\end{cases}
\end{equation*}
If $sp < n$, then the estimate \eqref{eq-I2} together with the fractional Sobolev inequality (Theorem~\ref{thm-Sobolev}) yields
\begin{equation}\label{eq-Sobolev}
\begin{split}
\left( \fint_{B_\rho} F(\bar{u})^{p\chi} \,\mathrm{d}x \right)^{1/\chi}
&\leq C \fint_{B_\rho} F(\bar{u})^p \,\mathrm{d}x + C\rho^{sp-n} [F(\bar{u})]_{W^{s, p}(B_\rho)}^p \\
&\leq C\left(1+\gamma^{2sp(p-1)/\varepsilon} + \gamma^p \right) \left( \frac{r}{r-\rho} \right)^{n+p} \fint_{B_r} F(\bar{u})^p \,\mathrm{d}x,
\end{split}
\end{equation}
where $C=C(n, s, p, \Lambda, \delta, \varepsilon, R^\varepsilon \|b_1\|_{L^{n/(sp-\varepsilon)}(B_R)})>0$. If $sp=n$, then we take $\tilde{p} \in (1, p)$ and $\tilde{s} \in (0, s)$ so that $\tilde{s} \tilde{p} < n$ and $\chi=\tilde{p}_{\tilde{s}}^\ast/p$, where $\tilde{p}_{\tilde{s}}^\ast = n\tilde{p}/(n-\tilde{s} \tilde{p})$. Then the same estimate \eqref{eq-Sobolev} follows from the fractional Sobolev inequality with $\tilde{s}\tilde{p}<n$, Lemma~\ref{lem-Jensen}, Jensen's inequality and the estimate \eqref{eq-I2}. In this case, the constant $C$ depends also on $\tilde{s}$ and $\tilde{p}$.

Recall the definition of $F$ given in \eqref{eq-F}. By passing to the limit as $l\to\infty$, we obtain that
\begin{equation}\label{eq-Phi-gamma}
\Phi(\gamma \chi, \rho) \leq \left[ C\left( 1 + \gamma^{2sp(p-1)/\varepsilon} + \gamma^p \right) \left( \frac{r}{r-\rho} \right)^{n+p} \right]^{1/\gamma} \Phi(\gamma, r),
\end{equation}
where
\begin{equation*}
\Phi(\gamma, r) = \left( \fint_{B_r} \bar{u}^{\gamma} \,\mathrm{d}x \right)^{1/\gamma}.
\end{equation*}
Note that \eqref{eq-Phi-gamma} holds for any $\gamma \geq \gamma_0$.

The standard iteration argument shows that
\begin{equation}\label{eq-bdd}
\begin{split}
\|\bar{u}\|_{L^\infty(B_{R/2})}
&\leq C \Phi(\gamma_0, R) \leq C \Phi((1+\delta)\theta, R) \\
&\leq C R^{-\frac{n}{(1+\delta)\theta}} \|u_+\|_{L^{(1+\delta)\theta}(B_{R})} + \mathrm{Tail}(u_+; x_0, R/2) < \infty.
\end{split}
\end{equation}
Moreover, it follows from \eqref{eq-I2} with $\gamma=\gamma_0$ and $\rho=R/2$ that
\begin{equation}\label{eq-seminorm}
\int_{B_{R/2}} \int_{B_{R/2}} \frac{|\bar{u}^{\gamma_0/p}(x) - \bar{u}^{\gamma_0/p}(y)|^p}{|x-y|^{n+sp}} \,\mathrm{d}y \,\mathrm{d}x < \infty.
\end{equation}
Since
\begin{equation*}
|a-b|^p \min\{ a^{\gamma_0-p}, b^{\gamma_0-p} \} \leq \left| \int_b^a t^{\gamma_0/p-1} \,\mathrm{d}t \right|^p = \left( \frac{p}{\gamma_0} \right)^p \left|a^{\gamma_0/p} - b^{\gamma_0/p} \right|^p \quad\text{for any }a, b \geq d,
\end{equation*}
we obtain from \eqref{eq-seminorm} that
\begin{equation}\label{eq-seminorm2}
\int_{B_{R/2}} \int_{B_{R/2}} \frac{|\bar{u}(x) - \bar{u}(y)|^p}{|x-y|^{n+sp}} \min\{ \bar{u}^{\gamma_0-p}(x), \bar{u}^{\gamma_0-p}(y) \}\,\mathrm{d}y \,\mathrm{d}x < \infty.
\end{equation}
We may assume that $\delta>0$ is sufficiently small so that $\gamma_0=(1+\delta)(p-1) \leq p$. Then the estimate \eqref{eq-seminorm2} together with the boundedness \eqref{eq-bdd} yields $[u_+]_{W^{s, p}(B_{R/2})} < \infty$. Note that we have only used the fact that $u$ is a weak subsolution of \eqref{eq-main} in $\Omega \setminus E$. By using that $u$ is a weak supersolution of \eqref{eq-main} in $\Omega \setminus E$, one can also shows that $[u_-]_{W^{s, p}(B_{R/2})} < \infty$. This concludes that $u \in W^{s, p}(B_{R/2})$.

We next prove that $u$ is indeed a weak solution of \eqref{eq-main} in $B_{R/4}$. If we let $\varphi \in C^\infty_c(B_{R/4})$, then $\varphi \bar{\eta}_j \in C^\infty_c(B_{R/4} \setminus E)$. Since $u$ is a weak solution of \eqref{eq-main} in $\Omega \setminus E$, we have
\begin{align*}
0
&=\mathcal{E}(u, \varphi\bar{\eta}_j) + \int_{B_{R/4}} b(x, u(x))\varphi(x)\bar{\eta}_j(x)\,\mathrm{d}x \\
&= \int_{B_{R/2}} \int_{B_{R/2}} |u(x)-u(y)|^{p-2}(u(x)-u(y))(\bar{\eta}_j(x)-\bar{\eta}_j(y))\varphi(y) k(x, y) \,\mathrm{d}y\,\mathrm{d}x \\
&\quad + \int_{B_{R/2}} \int_{B_{R/2}} |u(x)-u(y)|^{p-2}(u(x)-u(y))(\varphi(x)-\varphi(y))\bar{\eta}_j(x) k(x, y) \,\mathrm{d}y\,\mathrm{d}x \\
&\quad + 2\int_{B_{R/2}} \int_{\mathbb{R}^n \setminus B_{R/2}} |u(x)-u(y)|^{p-2}(u(x)-u(y)) \varphi(x)\bar{\eta}_j(x) k(x, y) \,\mathrm{d}y\,\mathrm{d}x \\
&\quad + \int_{B_{R/4}} b(x, u(x))\varphi(x)\bar{\eta}_j(x)\,\mathrm{d}x \\
&\eqqcolon J_1 + J_2 + J_3 + J_4.
\end{align*}
If $q>p$, then it follows from H\"older's inequality and Lemma~\ref{lem-Jensen} that
\begin{equation}\label{eq-J1}
|J_1| \leq CR^{n\frac{q-p}{pq}+t-s} \|\varphi\|_{L^\infty(B_{R/4})} [u]_{W^{s, p}(B_{R/2})}^{p-1} [\bar{\eta}_j]_{W^{t, q}(B_{R/2})}.
\end{equation}
If $q=p$, then \eqref{eq-J1} is nothing but H\"older's inequality. The estimate \eqref{eq-J1} together with $u \in W^{s ,p}(B_{R/2})$ and $\lim_{j\to\infty} [\bar{\eta}_j]_{W^{t, q}(\mathbb{R}^n)} = 0$ implies that $\lim_{j \to \infty} J_1=0$.

We observe that the integrands in $J_2$ and $J_3$ are bounded by functions
\begin{align*}
H_1 &\coloneqq \frac{|u(x)-u(y)|^p}{|x-y|^{n+sp}} + \frac{|\varphi(x)-\varphi(y)|^p}{|x-y|^{n+sp}} \quad\text{and} \\
H_2 &\coloneqq \frac{|u(x)|^{p-1} + |u(y)|^{p-1}}{|x-y|^{n+sp}} |\varphi(x)|,
\end{align*}
respectively, up to constants depending only on $p$ and $\Lambda$. Clearly, $H_1 \in L^1(B_{R/2} \times B_{R/2})$. Moreover, we see by using Lemma~\ref{lem-tail} that
\begin{align*}
\|H_2\|_{L^1(B_{R/2} \times B_{R/2}^c)}
&\leq \|\varphi\|_{L^\infty(B_{R/4})} \int_{B_{R/4}} \int_{B_{R/2}^c} \frac{|u(x)|^{p-1}+|u(y)|^{p-1}}{|x-y|^{n+sp}}\,\mathrm{d}y\,\mathrm{d}x\\
&\leq \frac{C}{R^{sp}} \|\varphi\|_{L^\infty(B_{R/4})} \left( \|u\|_{L^{p-1}(B_{R/4})}^{p-1} + R^n \mathrm{Tail}^{p-1}(u; x_0, R/2) \right) < \infty.
\end{align*}
It therefore follows from the dominated convergence theorem that 
\begin{equation*}
    \mathcal{E}(u, \varphi)+\int_{B_{R/4}} b(x, u(x))\varphi(x)\,\mathrm{d}x=0
\end{equation*} 
as desired.
\end{proof}

\begin{proof}[Proof of Corollary~\ref{cor-rem}]
We take $q \in (p, n/s)$ sufficiently close to $n/s$ so that
		\begin{equation*}
			((n-sp)-\delta(p-1)) \frac{q}{q-p}<n.
		\end{equation*}
		Then there exists $\delta_0>0$ such that
		\begin{equation*}
			-\left(\frac{n-sp}{p-1}-\delta\right)(1+\delta_0) \theta=-(1+\delta_0)	\left((n-sp)-\delta(p-1)\right) \frac{q}{q-p}>-n,
		\end{equation*}
		where $\theta=q(p-1)/(q-p)$. This implies that $u \in L^{(1+\delta_0)\theta}(\Omega)$. We now choose $t \in (s, 1)$ sufficiently close to $s$ so that $tq<n$, then $\{0\}$ is of $(t, q)$-capacity zero by Lemma~\ref{lem-cap-point}. Thus, Theorem~\ref{thm-rem} shows that the origin is a removable singularity.
\end{proof}

\section{Isolated singularity}\label{sec-isolated}

The goal of this section is to describe the asymptotic behavior of a singular solution near an isolated non-removable singularity. Namely, we provide the proofs of Theorems~\ref{thm-iso-sub} and \ref{thm-iso-log}. \emph{In the rest of the paper, we always assume that $\Omega$ contains the origin and that $\Omega$ is bounded as Theorems~\ref{thm-iso-sub} and \ref{thm-iso-log} focus on the isolated singularity at the origin. Note, however, that these theorems still hold for unbounded $\Omega$. Moreover, we may assume that $u \in C(\overline{\Omega} \setminus \{0\}) \cap W^{s,p}(\Omega \setminus B_{\rho})$ for all $\rho>0$ by recalling Theorem~\ref{thm-holder} and choosing $\Omega' \Subset \Omega$ if necessary.} The last assumption allows us to avoid technical issues arising from the regularity of $u$ near $\partial \Omega$.

We begin with the following result.

\begin{lemma}[Non-removable singularity]\label{lem-non-rem}
 Let $u$ be a weak solution of \eqref{eq-main} in $\Omega \setminus \{0\}$ such that $u$ is positive in $\Omega \setminus \{0\}$. If $u$ has a non-removable singularity at the origin, then
	\begin{equation*}
		\lim_{x \to 0}u(x)=\infty.
	\end{equation*}
\end{lemma}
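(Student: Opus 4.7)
The plan is to proceed by contradiction: suppose that $u$ does not tend to $\infty$ at the origin, so that $L \coloneqq \liminf_{x \to 0} u(x) < \infty$ (recall $u > 0$), and fix a sequence $y_k \to 0$ with $u(y_k) \leq L+1$. The strategy is to use these anchor points in a Harnack-type estimate valid \emph{at} the singular point in order to bound $u$ uniformly in a neighborhood of $0$, and then to apply Theorem~\ref{thm-rem} to conclude that $0$ is removable, contradicting the hypothesis.

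The central step is to establish a Harnack-type bound for $u$ on balls centered at $0$, even though $u$ is only a weak solution in the \emph{punctured} domain $\Omega \setminus \{0\}$. For this I would apply the Caccioppoli estimates of Lemma~\ref{lem-Caccio4} and \eqref{eq-Caccio-super} to the truncations $(u-k)_\pm$, using cut-offs $\bar{\eta}_j = 1 - \eta_j$, where $\eta_j \in C_c^\infty(\Omega)$ approximate the $(s, p)$-capacity potential of $\{0\}$. Since $sp \leq n$, Lemma~\ref{lem-cap-point} says that $\{0\}$ is of $(s, p)$-capacity zero, so exactly as in the proof of Theorem~\ref{thm-rem} one can arrange $0 \leq \bar{\eta}_j \leq 1$, $\bar{\eta}_j \equiv 0$ near $0$, $\bar{\eta}_j \to 1$ a.e., and $[\bar{\eta}_j]_{W^{s, p}(\mathbb{R}^n)} \to 0$. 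Passing to the limit $j \to \infty$ (Fatou on the left and the vanishing of $[\bar{\eta}_j]_{W^{s, p}}$ on the right), the cut-offs disappear, and the resulting estimates place the truncations $(u-k)_\pm$ in the fractional De Giorgi class on every $B_R(0) \Subset \Omega$ just as in Lemma~\ref{lem-DG}. The full Harnack inequality of Theorem~\ref{thm-harnack} therefore holds for $u$ on balls $B_R(0) \Subset \Omega$ by the same proof as that of Theorem~\ref{thm-harnack}.

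Equipped with this Harnack bound at the origin and using $u_- \equiv 0$ so that $\mathrm{Tail}(u_-; 0, R) = 0$, we obtain
\[
\sup_{B_{R/2}(0)} u \leq C \inf_{B_{R/2}(0)} u + C \bigl( R^\varepsilon \|b_2\|_{L^{n/(sp-\varepsilon)}(B_R)} \bigr)^{1/(p-1)}.
\]
Choosing $R$ small enough that $y_k \in B_{R/2}(0)$ for some (hence infinitely many) $k$ shows $\inf_{B_{R/2}(0)} u \leq u(y_k) \leq L+1$, so $u$ is bounded in a neighborhood of the origin. Theorem~\ref{thm-rem} applied with $q=p$, $t=s$ and $E=\{0\}$ (which has $(s, p)$-capacity zero by Lemma~\ref{lem-cap-point})—the integrability condition reducing to $u \in L^\infty$ when $q=p$ is satisfied—then yields a representative of $u$ solving \eqref{eq-main} in all of $\Omega$, contradicting the non-removability of the singularity. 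The main obstacle is the second step: extending the De Giorgi-type Caccioppoli estimate across the singular point, which is exactly the technical content of the weak-Harnack lemma announced in the outline, and whose proof rests on the capacity-potential approximation allowed by $\mathrm{cap}_{s, p}(\{0\}, \Omega)=0$.
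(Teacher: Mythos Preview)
There is a genuine gap in the second step. To place $u$ in the full De Giorgi class at the origin you need both the $\mathrm{DG}_-$ and the $\mathrm{DG}_+$ estimates. The $\mathrm{DG}_-$ side works exactly as you describe: since $u\geq 0$ in $B_R$, the truncation $w_-=(u-k)_-\leq k_+$ is bounded, and the term $\int_{B_R} w_-^p\int_{B_R}|\bar\eta_j(x)-\bar\eta_j(y)|^p|x-y|^{-n-sp}\,\mathrm{d}y\,\mathrm{d}x$ is dominated by $\|w_-\|_{L^\infty}^p[\bar\eta_j]_{W^{s,p}}^p\to0$. This is precisely the content of Lemma~\ref{lem-WHI}. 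But the same argument for $\mathrm{DG}_+$ requires controlling $\int_{B_R} w_+^p\int_{B_R}|\bar\eta_j(x)-\bar\eta_j(y)|^p|x-y|^{-n-sp}\,\mathrm{d}y\,\mathrm{d}x$ with $w_+=(u-k)_+$. By Lemma~\ref{lem-Holder} (with $\tilde s=s$, $\tilde p=p$) this needs $w_+^p\in L^\infty$, i.e.\ $u$ bounded near $0$, which is exactly what you are trying to prove. Your contradiction hypothesis $\liminf_{x\to0}u(x)<\infty$ gives boundedness only along a sequence, not on any ball, so the limiting step fails and the full Harnack inequality at the origin is not available.

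The paper circumvents this circularity by using only the weak Harnack inequality (Lemma~\ref{lem-WHI}), which needs just $\mathrm{DG}_-$. The missing ingredient is the observation that Theorem~\ref{thm-rem} with $q=p$ already forces $\limsup_{x\to0}u(x)=\infty$ (otherwise $u\in L^\infty$ locally and the singularity would be removable). From this one finds, via the Harnack inequality on annuli (Theorem~\ref{thm-Harnack-annulus}), radii $r=r_M\to0$ with $u\geq M$ on $B_r\setminus B_{r/2}$; the weak Harnack then gives $M\lesssim(\fint_{B_r}u^{p_0})^{1/p_0}\leq C\inf_{B_r}u+C\leq CL+C$, contradicting the arbitrariness of $M$. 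Your outline should therefore replace the full Harnack by the weak Harnack and insert the $\limsup=\infty$ step at the start.
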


For local equations, such a limit behavior follows rather easily from the removable singularity theorem, the Harnack inequality and the minimum principle, see Section~7 in Serrin~\cite{Ser64} for instance. However, the (global) minimum principle is not suitable for capturing this phenomenon in the nonlocal case. We thus provide an alternative approach based on the following weak Harnack inequality.

\begin{lemma}[Weak Harnack inequality]\label{lem-WHI}
Let $u$ be a weak supersolution of \eqref{eq-main} in $B_R \setminus \{0\}$ such that $u$ is nonnegative in $B_R \setminus \{0\}$. Then for any $r \leq R/2$,
\begin{equation*}
	\left(\fint_{B_r} u^{p_0}\,\mathrm{d}x\right)^{\frac{1}{p_0}} \leq C \inf_{B_{r}} u+C \mathrm{Tail}(u_-; 0, R) + C\left( R^\varepsilon \|b_2\|_{L^{\frac{n}{sp-\varepsilon}}(B_R)} \right)^{\frac{1}{p-1}},
\end{equation*}
where $p_0 \in (0,1)$ and $C>0$ depend only on $n$, $s$, $p$, $\Lambda$, $\varepsilon$ and $R^\varepsilon \|b_1\|_{L^{n/(sp-\varepsilon)}(B_R)}$.
\end{lemma}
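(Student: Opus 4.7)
The plan is to show that, although $u$ is a supersolution only in the punctured ball $B_R\setminus\{0\}$, it nevertheless satisfies the De Giorgi-type Caccioppoli estimate one would obtain for a supersolution on the full ball $B_R$. Once this is in hand, the conclusion will follow from the known weak Harnack inequality for the fractional De Giorgi class, as developed in Cozzi~\cite{Coz17}.

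The starting point is that, since $sp\le n$, Lemma~\ref{lem-cap-point} yields that $\{0\}$ is of $(s,p)$-capacity zero. Mimicking the construction in the first part of the proof of Theorem~\ref{thm-rem}, but now with the exponents $(t,q)=(s,p)$, I would produce smooth cut-offs $\bar\eta_j$ satisfying $0\le\bar\eta_j\le 1$, each vanishing in a neighborhood of $0$, with $\bar\eta_j\to 1$ a.e.\ and $[\bar\eta_j]_{W^{s,p}(\mathbb{R}^n)}\to 0$. For every $k\ge 0$ I would then apply the supersolution Caccioppoli estimate~\eqref{eq-Caccio-super} with $E=\{0\}$ and $\bar\eta=\bar\eta_j$. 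Since $u\ge 0$ and $k\ge 0$, the truncation $w_-=(u-k)_-$ is bounded by $k$, so the only $\bar\eta_j$-dependent integral on the right-hand side is controlled by $Ck^p[\bar\eta_j]_{W^{s,p}(B_R)}^p\to 0$. Fatou's lemma on the left-hand side (where the integrands are nonnegative and $\bar\eta_j\to 1$ a.e.) then removes $\bar\eta_j$ from there as well, yielding the standard De Giorgi-type Caccioppoli estimate for $(u-k)_-$ in $B_R$, valid for every $k\ge 0$. In other words, suitable truncations of $u$ belong to the fractional De Giorgi class $\mathrm{DG}^{s,p}_-$ on $B_R$ with parameters analogous to those appearing in~\eqref{eq-u-DG}.

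Having recovered this membership, I would invoke the weak Harnack estimate for the fractional De Giorgi class from Cozzi~\cite{Coz17} to deduce the displayed inequality, with an exponent $p_0\in(0,1)$ and a constant $C$ depending only on $n$, $s$, $p$, $\Lambda$, $\varepsilon$ and $R^{\varepsilon}\|b_1\|_{L^{n/(sp-\varepsilon)}(B_R)}$. Since $u$ itself may fail to be in $W^{s,p}_{\mathrm{loc}}(B_R)$, I would first apply the reasoning above to the bounded truncations $\min\{u,M\}$, which lie in $W^{s,p}_{\mathrm{loc}}(B_R)$ by Lemma~\ref{lem-test-trunc} in the only interesting case $u\to\infty$ at $0$, and then let $M\to\infty$ by monotone convergence to recover the bound for $u$.

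The hard part will be the limiting argument that strips off the cut-off $\bar\eta_j$. While $w_-$ is bounded and behaves well under the limit, the companion quantity $w_+=(u-k)_+$ appearing in the second integral on the left of~\eqref{eq-Caccio-super} may be unbounded near $0$, and one must check that Fatou's lemma still applies there; this works because the integrand is nonnegative. A secondary concern is to verify that the scaling of the nonlocal tail term and the source contribution involving $b_2$ on the right-hand side of the Caccioppoli estimate matches the scaling required by the weak Harnack inequality, but these terms are independent of $j$ and are already packaged in~\eqref{eq-Caccio-super} in exactly the form needed for the De Giorgi class theory.
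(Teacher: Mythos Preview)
Your proposal is correct and follows essentially the same route as the paper: construct cut-offs $\bar\eta_j$ from the $(s,p)$-capacity zero of $\{0\}$, apply the supersolution Caccioppoli estimate~\eqref{eq-Caccio-super} with $\bar\eta=\bar\eta_j$, use that $w_-=(u-k)_-$ is bounded (by $k_+$, since $u\ge 0$) to kill the $\bar\eta_j$-term on the right, pass to the limit via Fatou on the left, and then invoke Cozzi's weak Harnack inequality for $\mathrm{DG}^{s,p}_-$. The extra truncation step with $\min\{u,M\}$ is not needed: membership in $\mathrm{DG}^{s,p}_-$ only requires the Caccioppoli inequality for the truncations $(u-k)_-$, not that $u\in W^{s,p}_{\mathrm{loc}}(B_R)$, so the paper simply concludes $u\in\mathrm{DG}^{s,p}_-$ directly and applies \cite[Proposition~6.8]{Coz17}.
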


\begin{proof}
Since $\{0\}$ is of $(s, p)$-capacity zero, there exists a sequence of smooth functions $\bar{\eta}_j$ such that $0\leq \bar{\eta}_j\leq 1$ in $\mathbb{R}^n$, $\lim_{j \to \infty} [\bar{\eta}_j]_{W^{s, p}(\mathbb{R}^n)}=0$, $\bar{\eta}_j \to 1$ a.e.\ in $\mathbb{R}^n$ as $j \to \infty$ and $\bar{\eta}_j$ vanishes in a neighborhood of $\{0\}$; see the beginning of the proof of Theorem~\ref{thm-rem}. We now set $E=\{0\}$, let $B_{r_1}(x_0) \Subset B_{r_2}(x_0) \subset B_R$ and take $\bar{\eta}=\bar{\eta}_j$ in \eqref{eq-Caccio-super}. Since $w_-\coloneqq (u-k)_- \in L^\infty(B_R)$ for all $k \in \mathbb{R}$ by assumption, it follows from Fatou's lemma that
\begin{equation*}
\begin{split}
&\int_{B_{r_1}(x_0)} \int_{B_{r_1}(x_0)} \frac{|w_-(x)-w_-(y)|^p}{|x-y|^{n+sp}} \,\mathrm{d}y\,\mathrm{d}x + \int_{B_{r_1}(x_0)} w_-(x) \int_{\mathbb{R}^n} \frac{w_+^{p-1}(y)}{|x-y|^{n+sp}} \,\mathrm{d}y \,\mathrm{d}x \\
&\leq C\|w_-\|_{L^\infty(B_{r_2}(x_0))}^p \lim_{j \to \infty} [\bar{\eta}_j]_{W^{s, p}(B_{r_2}(x_0))}^p + C r_2^{-sp} \left( \frac{r_2}{r_2-r_1} \right)^p \|w_-\|_{L^p(B_{r_2}(x_0))}^p \\
&\quad + C \left( r_2^{\frac{\varepsilon}{p-1}}\|b_2\|^{\frac{p}{p-1}}_{L^{n/(sp-\varepsilon)}(B_R)} + \frac{|k|^p}{r_2^{\varepsilon}} \right) |A^-(k; x_0, r_2)|^{1-\frac{sp-\varepsilon}{n}} \\
&\quad + C r_2^{-sp} \left( \frac{r_2}{r_2-r_1} \right)^{n+sp} \|w_-\|_{L^1(B_{r_2}(x_0))} \mathrm{Tail}^{p-1}(w_-; x_0, r_2).
\end{split}
\end{equation*}
Since $\lim_{j \to \infty} [\bar{\eta}_j]_{W^{s, p}(\mathbb{R}^n)}=0$, it implies that
\begin{equation*}
    u \in \mathrm{DG}_-^{s, p}\left(\Omega; \|b_2\|^{1/(p-1)}_{L^{n/(sp-\varepsilon)}(B_R)}, C, -\infty, \frac{\varepsilon}{n}, \frac{\varepsilon}{p-1}, \infty \right)
\end{equation*}
for some $C=C(n, s, p, \Lambda, \varepsilon, R^\varepsilon \|b_1\|_{L^{n/(sp-\varepsilon)}(B_R)})>0$, where $\mathrm{DG}_-^{s,p}$ is the fractional De Giorgi class given in \cite[Section~6.1]{Coz17}. Thus, the desired estimate follows from \cite[Proposition~6.8]{Coz17}.
\end{proof}

Let us now prove Lemma~\ref{lem-non-rem} by using the weak Harnack inequality given in Lemma~\ref{lem-WHI}.

\begin{proof}[Proof of Lemma~\ref{lem-non-rem}]
Since $sp \leq n$, it follows from Lemma~\ref{lem-cap-point} that $\{0\}$ is of $(s,p)$-capacity zero. Thus, Theorem~\ref{thm-rem} implies that
\begin{equation}\label{eq-limsup}
	\limsup_{x \to 0}u(x)=\infty.
\end{equation}
It only remains to prove that $\liminf_{x \to 0}u(x)=\infty$. Suppose by contradiction that 
\begin{equation*}
	\liminf_{x \to 0}u(x)=L<\infty.
\end{equation*}
Let $B_R=B_R(0) \subset \Omega$. Then, for each $M>0$, there exists a radius $r=r_M \in (0, R/2)$ such that
\begin{equation*}
u \geq M
\end{equation*}
in $B_r \setminus B_{r/2}$ for $n \geq 2$ and in $(r/2, r)$ or in $(-r, -r/2)$ for $n=1$, by the Harnack inequality (Theorem~\ref{thm-Harnack-annulus} for $n \geq 2$ and Theorem~\ref{thm-harnack} for $n=1$) and \eqref{eq-limsup}. By using Lemma~\ref{lem-WHI} and the fact that $u \geq 0$ in $B_R \setminus \{0\}$, we obtain that
\begin{equation*}
	\left(\fint_{B_{r}} u^{p_0}\,\mathrm{d}x\right)^{1/p_0} \leq C \inf_{B_{r}} u+C\mathrm{Tail}(u_-; 0, R) + C \left( R^\varepsilon \|b_2\|_{L^{n/(sp-\varepsilon)}(B_R)} \right)^{1/(p-1)}
\end{equation*}
for some $p_0 \in (0,1)$. By combining all the estimates above and taking sufficiently small $r$, we conclude that $M \leq CL+C$ for some $C>0$, which leads to the contradiction.
\end{proof}

In order to prove the upper bounds in Theorems~\ref{thm-iso-sub} and \ref{thm-iso-log}, we first introduce the following invariant.

\begin{lemma}\label{lem-const}
Let $\varphi \in W^{s, p}(\Omega) \cap L^\infty(\Omega)$ be such that $\mathrm{supp}\,{\varphi} \Subset \Omega$ and $\varphi= 1$ in a neighborhood of the origin. If $u$ is a weak solution of \eqref{eq-main} in $\Omega \setminus \{0\}$, then the value
\begin{equation}\label{eq-const-Kbar}
\overline{K}\coloneqq \mathcal{E}(u, \varphi)+\int_{\Omega} b(x, u(x))(\varphi(x)-1) \,\mathrm{d}x
\end{equation}
is finite and independent of the particular choice of $\varphi$.
\end{lemma}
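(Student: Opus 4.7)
The plan has two parts: establishing finiteness of the two summands defining $\overline{K}$, and then verifying independence of the choice of $\varphi$ via a difference argument that exploits the normalization $\varphi\equiv 1$ near the origin.

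For finiteness, I would first invoke Lemma~\ref{lem-test-const} to get $|\mathcal{E}(u,\varphi)|<\infty$: the hypotheses $u\in W^{s,p}_{\mathrm{loc}}(\Omega\setminus\{0\})\cap L^{p-1}_{sp}(\mathbb{R}^n)$ (from the definition of weak solution) and $\varphi\in W^{s,p}_{\mathrm{loc}}(\Omega\setminus\{0\})\cap L^\infty(\Omega)$ with $\supp\varphi\Subset\Omega$ and $\varphi\equiv 1$ near $0$ are all directly met. For the lower-order integral, I would use that $\varphi-1$ vanishes on some open neighborhood $U$ of the origin, so the integrand is supported in $\Omega\setminus U$; there the standing assumption $u\in C(\overline{\Omega}\setminus\{0\})$ gives $\|u\|_{L^\infty(\Omega\setminus U)}<\infty$, and the structure condition \eqref{eq-str} combined with $b_1,b_2\in L^{n/(sp-\varepsilon)}(\Omega)\subset L^1(\Omega)$ (since $\Omega$ is bounded) supplies the bound.

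For independence, given two admissible $\varphi_1,\varphi_2$, I would set $\psi\coloneqq\varphi_1-\varphi_2\in W^{s,p}(\Omega)\cap L^\infty(\Omega)$. Then $\supp\psi\Subset\Omega$ and $\psi$ vanishes on a common neighborhood of $0$, so $\supp\psi$ has positive distance from $\{0\}$, giving $\supp\psi\Subset\Omega\setminus\{0\}$. By Proposition~\ref{prop-test} (combined with Corollary~\ref{cor-sol} to handle the sign), $\psi$ qualifies as a test function for \eqref{eq-main} on $\Omega\setminus\{0\}$, yielding
\begin{equation*}
\mathcal{E}(u,\psi)+\int_\Omega b(x,u(x))\psi(x)\,\mathrm{d}x=0.
\end{equation*}
Subtracting the two expressions for $\overline{K}$ produces precisely this quantity, hence $\overline{K}(\varphi_1)=\overline{K}(\varphi_2)$.

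The one delicate point is the support condition required by Proposition~\ref{prop-test}: the test function must lie compactly in the domain where the equation is posed, namely $\Omega\setminus\{0\}$. The normalization $\varphi\equiv 1$ near the origin built into the admissible class is exactly what propagates through the difference $\varphi_1-\varphi_2$ to produce this support property; it is also the structural reason that the lower-order term in \eqref{eq-const-Kbar} must be $b(x,u)(\varphi-1)$ rather than $b(x,u)\varphi$ (the latter would force $\varphi$ itself to vanish near $0$, losing all nonlocal information about the singularity that $\overline{K}$ is meant to encode).
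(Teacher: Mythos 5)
Your argument is correct and follows essentially the same route as the paper: finiteness of $\mathcal{E}(u,\varphi)$ via Lemma~\ref{lem-test-const}, and independence via testing with $\psi=\varphi_1-\varphi_2$, which is compactly supported in $\Omega\setminus\{0\}$, through Proposition~\ref{prop-test} and Corollary~\ref{cor-sol}. The only (minor) difference is in bounding the $b$-integral: you use the standing assumption $u\in C(\overline{\Omega}\setminus\{0\})$ to get an $L^\infty$ bound off a neighborhood of the origin together with $b_1,b_2\in L^1(\Omega)$, whereas the paper invokes the Sobolev-type estimate \eqref{eq-b-Holder} with $u\in W^{s,p}(\Omega\setminus N)$; both are valid.
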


\begin{proof}
It follows from Lemma~\ref{lem-test-const} that $\mathcal{E}(u, \varphi)$ is finite. Moreover, \eqref{eq-b-Holder} shows that
\begin{equation*}
    \left|\int_{\Omega} b(x, u(x))(\varphi(x)-1)\,\mathrm{d}x \right| \leq \int_{\Omega \setminus N} (b_1|u|^{p-1}+b_2)|\varphi-1| \,\mathrm{d}x \leq C \left( \|u\|_{W^{s, p}(\Omega \setminus N)}^{p-1} +1 \right) < \infty,
\end{equation*}
where $N$ is a neighborhood of the origin on which $\varphi =1$, and $C$ is a constant depending only on $n$, $s$, $p$, $\varepsilon$, $\|b_1\|_{L^{n/(sp-\varepsilon)}(\Omega)}$, $\|b_2\|_{L^{n/(sp-\varepsilon)}(\Omega)}$ and $\Omega$.

Let $\varphi$ and $\overline{\varphi}$ be two functions satisfying the conditions given in the statement. It then follows from Proposition~\ref{prop-test} and Corollary~\ref{cor-sol} that
\begin{equation*}
\mathcal{E}(u, \varphi - \overline{\varphi})+\int_{\Omega} b(x, u(x))(\varphi(x)-\overline{\varphi}(x))\,\mathrm{d}x=0,
\end{equation*}
which implies the desired conclusion.
\end{proof}

We next provide a weaker version of upper bounds for singular solutions. For this purpose, we fix a ball $B_{4R}=B_{4R}(0) \Subset \Omega$ with $R>0$ to be determined later in the proof of Lemma~\ref{lem-positive-K} (see \eqref{eq-R}), depending only on $n$, $s$, $p$, $\Lambda$ and $b_1$, and define
\begin{equation*}
    \mathcal{B}(r) \coloneqq \int_{B_{4R} \setminus B_r} |b(x, u(x))|\,\mathrm{d}x \quad \text{for }0<r<4R.
\end{equation*}

\begin{lemma}\label{lem-upper}
Let $u$ be given as in Lemma~\ref{lem-non-rem}. Then
\begin{equation}\label{eq-upper-B}
u^{p-1}(x) \lesssim
\begin{cases}
(1+\mathcal{B}(|x|))|x|^{-(n-sp)} &\text{if}~sp<n, \\
(1+\mathcal{B}(|x|))(-\log |x|)^{p-1} &\text{if}~sp=n,
\end{cases}
\end{equation}
in a neighborhood of the origin.
\end{lemma}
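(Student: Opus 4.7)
The plan is to reduce the pointwise upper bound on $u$ to a bound on $m_r := \inf_{B_r \setminus B_{r/2}} u$ via the Harnack inequality, and then extract such a bound from the invariant $\overline{K}$ of Lemma~\ref{lem-const} by evaluating it on a capacity-tuned test function. Since Lemma~\ref{lem-non-rem} ensures $u \to \infty$ at the origin, one may assume $u > 0$ in a punctured neighborhood of $0$; Theorem~\ref{thm-Harnack-annulus} applied on $B_r \setminus B_{r/2}$ with outer ball $B_{2R}$ then gives $u(x) \lesssim m_r + C$ for $|x| \in (r/2, r)$ once $r$ is small, with $C$ independent of $r$ (the tail and $b_2$-terms being uniformly bounded because $u$ is bounded away from $0$ and $u \in L^{p-1}_{sp}(\mathbb{R}^n)$). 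It therefore suffices to prove $m_r^{p-1} \, \mathrm{cap}_{s,p}(\overline{B}_r, B_{2R}) \lesssim 1 + \mathcal{B}(r)$; substituting the asymptotics of Lemma~\ref{lem-cap-ball} ($r^{n-sp}$ when $sp<n$, $(\log(R/r))^{1-p}$ when $sp=n$) yields exactly the two alternatives of \eqref{eq-upper-B}.

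For each small $r>0$, I will choose a test function $\varphi_r \in W^{s,p}(\mathbb{R}^n) \cap L^\infty(\mathbb{R}^n)$ with $0 \le \varphi_r \le 1$, $\mathrm{supp}\, \varphi_r \Subset B_{2R}$, $\varphi_r \equiv 1$ in a neighborhood of $\overline{B}_r$, and $[\varphi_r]_{W^{s,p}(\mathbb{R}^n)}^p$ within a constant of $\mathrm{cap}_{s,p}(\overline{B}_r, B_{2R})$, i.e.\ a near-extremal capacity potential. Lemma~\ref{lem-const} then gives
\begin{equation*}
\overline{K} = \mathcal{E}(u, \varphi_r) + \int_\Omega b(x, u(x))(\varphi_r(x)-1)\,\mathrm{d}x,
\end{equation*}
and the $b$-integral is bounded in absolute value by $\mathcal{B}(r)$ because $\varphi_r - 1 \equiv 0$ on $B_r$ and $|\varphi_r - 1| \leq 1$ elsewhere. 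Since $\overline{K}$ is a finite constant depending only on $u$ and $\Omega$, the whole problem reduces to producing a matching \emph{lower} bound $\mathcal{E}(u, \varphi_r) \gtrsim m_r^{p-1} \, \mathrm{cap}_{s,p}(\overline{B}_r, B_{2R}) - C_0$ with $C_0$ independent of $r$.

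The derivation of this lower bound is the heart of the argument and the main obstacle. The guiding idea is that iterating Theorem~\ref{thm-Harnack-annulus} over dyadic annuli down to $0$ promotes $u \gtrsim m_r$ from $B_r \setminus B_{r/2}$ to all of $B_{2r}$, while $u$ is continuous and bounded on $B_{2R} \setminus B_r$; so for $(x,y)$ with $x \in B_r$ and $y \in B_{2R} \setminus B_r$ one has $\varphi_r(x) - \varphi_r(y) = 1 - \varphi_r(y) \ge 0$, and $u(x) - u(y) \gtrsim m_r$ once $r$ is small. The corresponding piece of $\mathcal{E}(u, \varphi_r)$ is thus (via ellipticity \eqref{eq-ellipticity} and symmetry of $k$) bounded below by a constant multiple of $m_r^{p-1} \, [\varphi_r]_{W^{s,p}}^p \eqsim m_r^{p-1} \, \mathrm{cap}_{s,p}(\overline{B}_r, B_{2R})$. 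The remaining pieces -- the ``annulus to annulus'' interaction inside $B_{2R} \setminus B_r$ and the genuinely long-range interaction with $\mathbb{R}^n \setminus B_{2R}$ -- have a priori the wrong sign, but are absorbed as $r$-independent errors using the boundedness of $u$ off the origin together with $u \in L^{p-1}_{sp}(\mathbb{R}^n)$; this ``nonlocal tail'' accounting is precisely the delicate point that the introduction flags, and is the reason one cannot simply run the local-case argument verbatim.
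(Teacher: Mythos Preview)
Your overall strategy---evaluate the invariant $\overline{K}$ on a test function, extract a capacity term, and convert via Lemma~\ref{lem-cap-ball} and Harnack---matches the paper's. The gap is in the choice of test function and in the treatment of the ``annulus to annulus'' piece.

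You take $\varphi_r$ to be an abstract near-capacitary potential for $\overline{B}_r$ in $B_{2R}$, independent of $u$, and then attempt to show $\mathcal{E}(u,\varphi_r)\gtrsim m_r^{p-1}\,\mathrm{cap}_{s,p}(\overline{B}_r,B_{2R})-C_0$. Two steps in this lower bound are not justified. First, the claim that $u(x)-u(y)\gtrsim m_r$ for every $x\in B_r$ and $y\in B_{2R}\setminus B_r$ fails when $y$ is close to $\partial B_r$ from the outside: by continuity (and the Harnack inequality on the annulus $B_{2r}\setminus B_{r/2}$), $u(y)$ is itself comparable to $m_r$ there, so $u(x)-u(y)$ can be small or negative. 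Second, and more seriously, the ``annulus to annulus'' piece
\[
\int_{B_{2R}\setminus B_r}\int_{B_{2R}\setminus B_r} |u(x)-u(y)|^{p-2}(u(x)-u(y))(\varphi_r(x)-\varphi_r(y))\,k(x,y)\,\mathrm{d}y\,\mathrm{d}x
\]
cannot be absorbed as an $r$-independent error ``using the boundedness of $u$ off the origin'': $u$ is \emph{not} uniformly bounded on $B_{2R}\setminus B_r$ as $r\to0$, since $u\to\infty$ at the origin and $u\eqsim m_r$ near $\partial B_r$. A H\"older bound on this piece would involve $[u]_{W^{s,p}(B_{2R}\setminus B_r)}$, for which you have no $r$-uniform control. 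Since $\varphi_r$ genuinely varies on this annulus, there is no cancellation to exploit and no evident sign.

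The paper sidesteps both issues by choosing the $u$-dependent test function $\varphi=\min\{u\eta/m,1\}$ with $\eta$ a fixed cut-off equal to $1$ on $B_{2R}$. The elementary pointwise inequality
\[
m^{p-1}\,|\varphi(x)-\varphi(y)|^p \le |v(x)-v(y)|^{p-2}(v(x)-v(y))(\varphi(x)-\varphi(y)), \qquad v=u\eta,
\]
holds for \emph{all} $x,y$, so integration gives $m^{p-1}\mathcal{E}(\varphi,\varphi)\le\mathcal{E}(v,\varphi)$ in one stroke, with no regional splitting and hence no uncontrolled annulus term. The only correction $\mathcal{E}(u\eta,\varphi)-\mathcal{E}(u,\varphi)$ is supported where $\eta\ne1$, i.e.\ in $\Omega\setminus B_{2R}$, where $u$ is genuinely bounded; this is the localization you need but did not obtain. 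Since $\varphi=1$ on the set $\{u\ge m\}\supset\overline{B}_r\setminus B_{r/2}$, it is admissible for $\mathrm{cap}_{s,p}(\overline{B}_r\setminus B_{r/2},B_{3R})$, and Lemma~\ref{lem-cap-ball} then furnishes the capacity lower bound on $\mathcal{E}(\varphi,\varphi)$.
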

In fact, \eqref{eq-upper-B} holds in $B_{4R}$ since $u$ is bounded away from the origin.

\begin{proof}
Assume first that $n \geq 2$. Set
\begin{equation}\label{eq-Mm}
M=M_r=\max_{\overline{B}_r \setminus B_{r/2}}u \quad\text{and}\quad m=m_r=\min_{\overline{B}_r \setminus B_{r/2}}u
\end{equation}
for $r \leq R$. Then $M \geq m >0$. It is enough to show that for sufficiently small $r\in(0,1)$
\begin{equation}\label{eq-M}
M^{p-1} \lesssim
\begin{cases}
(1+\mathcal{B}(r)) r^{-(n-sp)} &\text{if}~sp<n, \\
(1+\mathcal{B}(r)) (-\log r) &\text{if}~sp=n.
\end{cases}
\end{equation}
Let $\eta \in C^\infty_c(B_{3R})$ be a cut-off function such that $0 \leq \eta \leq 1$, $|\nabla \eta| \leq 2/R$ in $\mathbb{R}^n$ and $\eta = 1$ in $B_{2R}$. Define
\begin{equation*}
v=u\eta \quad \text{and} \quad \varphi=\min\{ v/m, 1\}.
\end{equation*}
Since $v/m \in W^{s, p}_{\mathrm{loc}}(\Omega \setminus \{0\})$ and $\lim_{x \to 0} v(x)/m=\infty$, it follows from Lemma~\ref{lem-test-trunc} that $\varphi \in W^{s, p}_{\mathrm{loc}}(\Omega)$. Moreover, since $\varphi=\varphi \bar{\eta}$ for any $\bar{\eta} \in C^\infty_c(\Omega)$ such that $\bar{\eta}= 1$ on $B_{3R}$, Lemma~\ref{lem-test-prod} shows that $\varphi \in W^{s, p}(\Omega)$. It is clear that $\varphi= 1$ in some neighborhood of the origin and that $\supp{\varphi} \Subset \Omega$, so $\overline{K}$ given by \eqref{eq-const-Kbar} is a constant independent of $\varphi$ by Lemma~\ref{lem-const}.

Since 
\begin{equation*}
m^{p-1}|\varphi(x)-\varphi(y)|^p \leq |v(x)-v(y)|^{p-2}(v(x)-v(y))(\varphi(x)-\varphi(y)),
\end{equation*}
we obtain that
\begin{equation}\label{eq-cutoff}
m^{p-1} \mathcal{E}(\varphi, \varphi) \leq \mathcal{E}(v, \varphi) \leq  |\mathcal{E}(u\eta, \varphi) - \mathcal{E}(u, \varphi)|+\overline{K}+\left|\int_{\Omega} b(x, u(x))(\varphi(x)-1) \,\mathrm{d}x \right|.
\end{equation}
We now estimate each term in the right-hand side of \eqref{eq-cutoff}. First of all, we see that
\begin{align}\label{eq-upper-I1234}
\begin{split}
|\mathcal{E}(u\eta, \varphi)-\mathcal{E}(u, \varphi)|
&\leq \int_{B_{4R} \setminus B_R} \int_{B_{4R} \setminus B_R} |\mathcal{A}(u\eta)-\mathcal{A}(u)| |\varphi(x)-\varphi(y)| k(x, y) \,\mathrm{d}y\,\mathrm{d}x \\
&\quad + 2\int_{B_{4R} \setminus B_R} \int_{\Omega \setminus B_{4R}} |\mathcal{A}(u\eta)-\mathcal{A}(u)| \varphi(x) k(x, y) \,\mathrm{d}y\,\mathrm{d}x \\
&\quad + 2\int_{B_R} \int_{\Omega \setminus B_R} |\mathcal{A}(u\eta)-\mathcal{A}(u)| |\varphi(x)-\varphi(y)| k(x, y) \,\mathrm{d}y\,\mathrm{d}x \\
&\quad + 2\int_\Omega \int_{\mathbb{R}^n \setminus \Omega} |\mathcal{A}(u\eta)-\mathcal{A}(u)| \varphi(x) k(x, y) \,\mathrm{d}y\,\mathrm{d}x \\
&\eqqcolon I_1 + I_2 + I_3 + I_4,
\end{split}
\end{align}
where $\mathcal{A}(u) = \mathcal{A}(u)(x, y)\coloneqq |u(x)-u(y)|^{p-2}(u(x)-u(y))$.

Since
\begin{align*}
|\mathcal{A}(u\eta)-\mathcal{A}(u)|^{\frac{p}{p-1}}
&\leq C |(u\eta)(x)-(u\eta)(y)|^p + C |u(x)-u(y)|^p \\
&\leq C |u(x)-u(y)|^p + C u^p(x) |\eta(x)-\eta(y)|^p
\end{align*}
and
\begin{align*}
|\varphi(x)-\varphi(y)|^p
&\leq m^{-p} |(u\eta)(x)-(u\eta)(y)|^p \\
&\leq Cm^{-p} (|u(x)-u(y)|^p + u^p(x) |\eta(x)-\eta(y)|^p),
\end{align*}
it follows from Young's inequality that
\begin{align}\label{eq-upper-I1}
\begin{split}
I_1
&\leq \frac{C}{m} \int_{B_{4R} \setminus B_R} \int_{B_{4R} \setminus B_R} \frac{|\mathcal{A}(u\eta)-\mathcal{A}(u)|^{\frac{p}{p-1}} + m^p |\varphi(x)-\varphi(y)|^p}{|x-y|^{n+sp}} \,\mathrm{d}y\,\mathrm{d}x \\
&\leq \frac{C}{m} [u]_{W^{s, p}(B_{4R} \setminus B_R)}^p + \frac{C}{m} R^{-sp} \|u\|_{L^p(B_{4R}\setminus B_R)}^p,
\end{split}
\end{align}
where $C=C(n, s, p, \Lambda)>0$. On the other hand, since
\begin{equation}\label{eq-A}
|\mathcal{A}(u\eta)-\mathcal{A}(u)| \leq C(p)(|u(x)|^{p-1} + |u(y)|^{p-1})
\end{equation}
and $|\varphi(x)-\varphi(y)| \leq 1$, we have by Lemma~\ref{lem-tail} that
\begin{align}\label{eq-upper-I2}
\begin{split}
I_2
&\leq C \int_{B_{3R} \setminus B_R} \int_{\Omega \setminus B_{4R}} \frac{u^{p-1}(x)+u^{p-1}(y)}{|x-y|^{n+sp}} \,\mathrm{d}y\,\mathrm{d}x \\
&\leq \frac{C}{R^{sp}} \left( \|u\|_{L^{p-1}(B_{3R})}^{p-1} + R^n \mathrm{Tail}^{p-1}(u; 0, R) \right),
\end{split}
\end{align}
where $C=C(n, s, p, \Lambda)>0$. Similar computations show that
\begin{equation}\label{eq-upper-I34}
\begin{split}
I_3 + I_4
&\leq C \int_{B_R} \int_{\Omega \setminus B_{2R}} \frac{u^{p-1}(x)+u^{p-1}(y)}{|x-y|^{n+sp}} \,\mathrm{d}y\,\mathrm{d}x + C \int_{B_{3R}} \int_{\mathbb{R}^n \setminus \Omega} \frac{u^{p-1}(x)+|u(y)|^{p-1}}{|x-y|^{n+sp}} \,\mathrm{d}y\,\mathrm{d}x \\
&\leq \frac{C}{R^{sp}} \left( \|u\|_{L^{p-1}(B_{3R})}^{p-1} + R^n \mathrm{Tail}^{p-1}(u; 0, R) \right)
\end{split}
\end{equation}
where $C=C(n, s, p, \Lambda)>0$.

Next, we estimate
\begin{equation}\label{eq-upper-J12}
\begin{split}
    \left|\int_{\Omega} b(x, u(x))(\varphi(x)-1) \,\mathrm{d}x \right|
    &\leq \int_{\Omega \cap \{v \leq m\}} |b(x, u(x))| \,\mathrm{d}x \\
    &\leq \mathcal{B}(r) + \int_{B_r \cap \{v \leq m\}} |b(x, u(x))| \,\mathrm{d}x + \int_{\Omega \setminus B_{4R}} |b(x, u(x))| \,\mathrm{d}x \\
    &\eqqcolon \mathcal{B}(r) + J_1 + J_2.
\end{split}
\end{equation}
By applying H\"older's inequality and using $r<1$, we can estimate $J_1$ as
\begin{equation}\label{eq-upper-J1}
        J_1 \leq \int_{B_r} (b_1m^{p-1}+b_2)\,\mathrm{d}x\leq Cr^{n-sp+\varepsilon}m^{p-1}+C,
\end{equation}
where $C>0$ depends only on $n$, $s$, $p$, $\varepsilon$, $\|b_1\|_{L^{n/(sp-\varepsilon)}(\Omega)}$ and $\|b_2\|_{L^{n/(sp-\varepsilon)}(\Omega)}$. Moreover, \eqref{eq-b-Holder} shows that
\begin{equation}\label{eq-upper-J2}
J_2 \leq \int_{\Omega \setminus B_{4R}} (b_1|u|^{p-1}+b_2) \,\mathrm{d}x \leq C \|u\|_{W^{s, p}(\Omega \setminus B_{4R})}^{p-1} + C,
\end{equation}
where $C$ depends only on $n$, $s$, $p$, $\varepsilon$, $\|b_1\|_{L^{n/(sp-\varepsilon)}(\Omega)}$, $\|b_2\|_{L^{n/(sp-\varepsilon)}(\Omega)}$ and $\Omega$.

By combining \eqref{eq-cutoff}--\eqref{eq-upper-I1} and \eqref{eq-upper-I2}--\eqref{eq-upper-J2}, we obtain that
\begin{equation}\label{eq-upper-E}
m^p \mathcal{E}(\varphi, \varphi) \leq C+(C+\overline{K}+\mathcal{B}(r))m+Cr^{n-sp+\varepsilon}m^p,
\end{equation}
where $C$ is a positive constant depending on $n$, $s$, $p$, $\Lambda$, $R$, $d$, $\eta$ and $u$. Since the function $\varphi$ is admissible for $\mathrm{cap}_{s, p}(\overline{B}_r \setminus B_{r/2}, B_{3R})$, it follows from Lemma~\ref{lem-cap-ball} that
\begin{equation}\label{eq-upper-cap}
\Lambda\mathcal{E}(\varphi, \varphi) \geq [\varphi]_{W^{s, p}(\mathbb{R}^n)}^p \geq \mathrm{cap}_{s, p}(\overline{B}_r \setminus B_{r/2}, B_{3R}) \eqsim
\begin{cases}
    r^{n-sp} &\text{if}~sp<n, \\
    (-\log r)^{1-p} &\text{if}~sp=n,
\end{cases}
\end{equation}
provided that $r$ is sufficiently small. Thus, by taking small $r \in (0,1)$, we obtain from \eqref{eq-upper-E} and \eqref{eq-upper-cap} that
\begin{equation*}
    m^p \lesssim \left( 1+(1+\mathcal{B}(r))m \right)
    \begin{cases}
    r^{n-sp} &\text{if}~sp<n, \\
    (-\log r)^{1-p} &\text{if}~sp=n.
\end{cases}
\end{equation*}
Note that Lemma~2 in Serrin~\cite{Ser64} implies that
\begin{equation}\label{eq-m}
m^{p-1} \lesssim
\begin{cases}
(1+\mathcal{B}(r))r^{-(n-sp)} &\text{if}~sp<n, \\
(1+\mathcal{B}(r))(-\log r)^{p-1} &\text{if}~sp=n.
\end{cases}
\end{equation}
Therefore, the desired upper bound \eqref{eq-M} for sufficiently small $r>0$ follows from \eqref{eq-m} and the Harnack inequality (Theorem~\ref{thm-Harnack-annulus}).

Next, we assume that $n=1$. In this case, the argument proceeds analogously to the case $n\geq2$, with $M$ and $m$ in \eqref{eq-Mm} now defined by
\begin{equation}\label{eq-Mm-pm}
M= M_r^\pm = \max_{I_r^{\pm}}u \quad\text{and}\quad m= m_r^{\pm}=\min_{I_r^{\pm}}u,
\end{equation}
where
\begin{equation}\label{eq-interval}
I_r^{+}=[r/2, r] \quad\text{and}\quad I_r^-=[-r, -r/2].
\end{equation}
The only difference is that the function $\varphi$ is now admissible for $\mathrm{cap}_{s, p}(I_r^{\pm}, (-3R, 3R))$ instead of $\mathrm{cap}_{s, p}(\overline{B}_r \setminus B_{r/2}, B_{3R})$. Nevertheless, by Lemma~\ref{lem-cap-ball}, we still obtain the lower bound
\begin{equation}\label{eq-upper-cap-1d}
\mathrm{cap}_{s, p}(I_r^{\pm}, (-3R, 3R)) \gtrsim 
\begin{cases}
    r^{n-sp} &\text{if}~sp<n, \\
    (-\log r)^{1-p} &\text{if}~sp=n,
\end{cases}
\end{equation}
which replaces \eqref{eq-upper-cap}. This leads to \eqref{eq-m} with the new $m=m_r^\pm$, and consequently \eqref{eq-M} with the new $M=M_r^\pm$ also follows from the Harnack inequality in Theorem~\ref{thm-harnack}.
\end{proof}

In order to improve the upper bound provided in Lemma~\ref{lem-upper}, we need to investigate the integral term $\mathcal{B}(r)$ in detail.

\begin{lemma}\label{lem-integrable}
Let $u$ be given as in Lemma~\ref{lem-non-rem}. Then 
    \begin{equation*}
        \int_{B_r} |b(x, u(x))| \,\mathrm{d}x \lesssim 
        \begin{cases}
            r^{\varepsilon} & \text{if $sp<n$},\\
            r^{\varepsilon/2} & \text{if $sp=n$}.
        \end{cases}
    \end{equation*}
In particular, $b(x, u(x)) \in L^1(B_{4R})$.
\end{lemma}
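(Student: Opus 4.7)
The obstacle is that Lemma~\ref{lem-upper} controls $u^{p-1}(x)$ only in terms of $\mathcal{B}(|x|)$, and we do not yet know that $\mathcal{B}$ remains bounded as the argument tends to $0$. The plan is therefore to bootstrap to a uniform bound $\sup_{0<r<r_0}\mathcal{B}(r)<\infty$ via a dyadic iteration on annuli, and then to extract the claimed decay by a direct H\"older estimate.

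For the iteration, fix a small $r_0\in(0,4R)$, set $r_k=r_0/2^k$, $A_k=B_{r_k}\setminus B_{r_{k+1}}$, and introduce
\begin{equation*}
a_k=\int_{A_k}|b(x,u(x))|\,\mathrm{d}x,\qquad \beta_k=\mathcal{B}(r_k),\qquad \omega_k=\|b_1\|_{L^{n/(sp-\varepsilon)}(A_k)}+\|b_2\|_{L^{n/(sp-\varepsilon)}(A_k)}.
\end{equation*}
Continuity of $u$ on $\overline{\Omega}\setminus\{0\}$ ensures that each $\beta_k$, and hence each $a_k$, is \emph{a priori} finite. Since $|x|\eqsim r_k$ and $\mathcal{B}(|x|)\leq\beta_{k+1}$ on $A_k$, applying Lemma~\ref{lem-upper} together with H\"older's inequality in $L^{n/(sp-\varepsilon)}$ yields
\begin{equation*}
a_k\leq C(1+\beta_{k+1})\mu_k,\qquad \mu_k=\begin{cases}r_k^{\varepsilon}\omega_k&\text{if } sp<n,\\ r_k^{\varepsilon/2}\omega_k&\text{if } sp=n,\end{cases}
\end{equation*}
where the critical case absorbs the logarithmic factor via $r^{\varepsilon}(-\log r)^{p-1}\lesssim r^{\varepsilon/2}$ for small $r$. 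A further H\"older inequality in $\ell^q$ with conjugate exponents $q=n/(n-sp+\varepsilon)$ and $q'=n/(sp-\varepsilon)$, combined with the disjointness of the annuli $\{A_k\}$, then shows that $\sum_k\mu_k<\infty$.

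From $\beta_{k+1}=\beta_k+a_k$, absorbing $\beta_{k+1}$ on the right (valid once $k$ is large enough that $C\mu_k\leq 1/2$) yields the Gronwall-type inequality $\beta_{k+1}+1\leq(1+C'\mu_k)(\beta_k+1)$. Iterating this and invoking $\sum_k\mu_k<\infty$ gives $M\coloneqq\sup_k\beta_k<\infty$, which in particular establishes $b(x,u(x))\in L^1(B_{4R})$. With $\mathcal{B}(|x|)\leq M$ on $B_{r_0}$, Lemma~\ref{lem-upper} and H\"older's inequality applied directly on $B_r$ reduce the claim to the elementary estimates
\begin{equation*}
\int_{B_r}b_1(x)|x|^{-(n-sp)}\,\mathrm{d}x\lesssim r^{\varepsilon}\quad\text{and}\quad\int_{B_r}b_1(x)(-\log|x|)^{p-1}\,\mathrm{d}x\lesssim r^{\varepsilon/2},
\end{equation*}
together with the trivial bounds for the $b_2$ contributions. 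The main obstacle is the bootstrap step itself: before $\beta_k$ is controlled, the estimate in Lemma~\ref{lem-upper} is circular, and it is the dyadic decomposition together with the summability of $\mu_k$ that closes the loop.
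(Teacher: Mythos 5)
Your proposal is correct and follows essentially the same route as the paper: both hinge on the difference inequality $\mathcal{B}(r)-\mathcal{B}(2r)\lesssim(1+\mathcal{B}(r))\,r^{\varepsilon}$ (or $r^{\varepsilon/2}$ when $sp=n$) obtained from Lemma~\ref{lem-upper}, the structure condition, and H\"older, and then solve it by a dyadic Gronwall-type iteration. The paper compresses that last step into a reference to the argument of Serrin~\cite[Lemma~2]{Ser65}, whereas you carry it out explicitly; you also localize the $L^{n/(sp-\varepsilon)}$-norms of $b_1,b_2$ to the disjoint annuli $A_k$ and invoke an $\ell^{q}$-H\"older inequality to sum $\mu_k$, which is a bit sharper but unnecessary — the paper simply bounds $\|b_1+b_2\|_{L^{n/(sp-\varepsilon)}(B_{2r})}$ by the global norm, making $\sum r_k^{\varepsilon}$ trivially summable. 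Your final direct H\"older estimate on $B_r$ is likewise fine but can be bypassed: once $\sup_k\beta_k<\infty$, summing the difference inequality over dyadic scales $\le r$ already yields $\int_{B_r}|b(\cdot,u)|\lesssim r^{\varepsilon}$.
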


\begin{proof}
By Lemma~\ref{lem-non-rem}, there exists $r_1 \in (0, 2R)$ such that $u \geq 1$ in $B_{2r_1}$. By applying H\"older's inequality and Lemma~\ref{lem-upper}, we have for any $r < r_1$,
    \begin{equation*}
    \begin{split}
        \mathcal{B}(r)-\mathcal{B}(2r)
        &\leq \int_{B_{2r} \setminus B_r} (b_1+b_2)u^{p-1} \,\mathrm{d}x\\
        &\leq Cr^{n-sp+\varepsilon}\|b_1+b_2\|_{L^{n/(sp-\varepsilon)}(B_{2r})} \begin{cases}
(1+\mathcal{B}(r))r^{\varepsilon} &\text{if}~sp<n, \\
(1+\mathcal{B}(r))r^{\varepsilon/2} &\text{if}~sp=n.
\end{cases}
    \end{split}
    \end{equation*}
Therefore, the conclusion follows from the same argument provided in the proof of \cite[Lemma~2]{Ser65}.
\end{proof}

The upper bound of singular solutions is now an immediate consequence of Lemmas~\ref{lem-upper} and \ref{lem-integrable}, both in the subcritical and critical cases.
\begin{proof}[Proofs of the upper bound in Theorems~\ref{thm-iso-sub} and \ref{thm-iso-log}]
We may assume, by adding a suitable constant $C$ to $u$, that $u$ is positive in $\Omega \setminus \{0\}$. In this case, $v\coloneqq u+C$ satisfies $\mathcal{L}v+\bar{b}(x, v)=0$ in $\Omega \setminus \{0\}$, where $\bar{b}(x, z) \coloneqq b(x, z-C)$ enjoys the same structure condition as $b$, namely,
\begin{align*}
    \bar{b}(x, z)
    &\leq \max\{2^{p-2}, 1\}b_1(x) |z|^{p-1} + C^{p-1}\max\{2^{p-2}, 1\}b_1(x)+b_2(x) \\
    &\eqqcolon \bar{b}_1(x)|z|^{p-1} + \bar{b}_2(x).
\end{align*}
Therefore, we can apply Lemmas~\ref{lem-upper} and \ref{lem-integrable}, which leads us to the desired upper bound of singular solutions.
\end{proof}

In order to derive the lower bound of singular solutions in Theorem~\ref{thm-iso-sub}, we consider a new invariant quantity $K$ which is similar to $\overline{K}$.

\begin{lemma}\label{lem-K}
Let $\varphi \in W^{s, p}(\Omega) \cap L^\infty(\Omega)$ be such that $\mathrm{supp}\,{\varphi} \Subset \Omega$ and $\varphi= 1$ in a neighborhood of the origin. Let $u$ be given as in Lemma~\ref{lem-non-rem}. Then the value
\begin{equation}\label{eq-const-K}
K\coloneqq \mathcal{E}(u, \varphi)+\int_{\Omega} b(x, u(x))\varphi(x) \,\mathrm{d}x
\end{equation}
is finite and independent of the particular choice of $\varphi$.
\end{lemma}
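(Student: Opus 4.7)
The plan is to parallel the proof of Lemma~\ref{lem-const}, the key new ingredient being Lemma~\ref{lem-integrable}, which handles the behavior of $b(x,u(x))$ near the origin—this is precisely where the two invariants $K$ and $\overline{K}$ differ, since $\varphi$ in $K$ does not vanish near $0$.

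For finiteness, the term $\mathcal{E}(u,\varphi)$ is finite by Lemma~\ref{lem-test-const}. For the integral of $b\varphi$, I would fix a radius such that $\text{supp}\,\varphi \subset B_{4R}$ may be assumed to be contained in some larger set where earlier estimates hold, and decompose
\begin{equation*}
\int_\Omega b(x,u(x))\varphi(x)\,\mathrm{d}x = \int_{B_{4R}} b(x,u(x))\varphi(x)\,\mathrm{d}x + \int_{\Omega \setminus B_{4R}} b(x,u(x))\varphi(x)\,\mathrm{d}x.
\end{equation*}
The first integral is bounded by $\|\varphi\|_{L^\infty(\Omega)} \|b(\cdot,u)\|_{L^1(B_{4R})}$, which is finite by Lemma~\ref{lem-integrable}. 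For the second, $u \in W^{s,p}(\Omega \setminus B_{4R})$ (since $u$ is regular away from the singularity), so the estimate \eqref{eq-b-Holder} applied on $\Omega \setminus B_{4R}$ with $w=\varphi$ gives the bound.

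For the independence from the choice of $\varphi$, suppose $\varphi,\overline{\varphi}$ both satisfy the hypotheses, and set $\psi = \varphi - \overline{\varphi}$. Then $\psi \in W^{s,p}(\Omega) \cap L^\infty(\Omega)$, $\text{supp}\,\psi \Subset \Omega$, and $\psi$ vanishes in a neighborhood of the origin. In particular, $\psi \in W^{s,p}_{\mathrm{loc}}(\Omega \setminus \{0\})$ with $\text{supp}\,\psi \Subset \Omega \setminus \{0\}$. Since $u$ is a weak solution of \eqref{eq-main} in $\Omega \setminus \{0\}$, Proposition~\ref{prop-test} together with Corollary~\ref{cor-sol} (applied to $\psi_+$ and $\psi_-$ separately) yields
\begin{equation*}
\mathcal{E}(u,\psi) + \int_{\Omega \setminus \{0\}} b(x,u(x))\psi(x)\,\mathrm{d}x = 0,
\end{equation*}
and since $\psi$ vanishes near $0$ this is the same as $\mathcal{E}(u,\psi) + \int_\Omega b(x,u(x))\psi(x)\,\mathrm{d}x = 0$, proving $K(\varphi) = K(\overline{\varphi})$.

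The main conceptual point—and the reason $K$ is introduced only after Lemma~\ref{lem-integrable} is available—is that controlling $\int b\,\varphi$ on all of $\Omega$, including arbitrarily close to the singularity, is not automatic from $b_1, b_2 \in L^{n/(sp-\varepsilon)}$ alone: it requires the borderline integrability $b(\cdot,u(\cdot)) \in L^1_{\mathrm{loc}}$ of the composition, which is exactly the content of Lemma~\ref{lem-integrable}. Once this is secured, the remainder is a routine adaptation of Lemma~\ref{lem-const}.
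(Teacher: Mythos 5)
Your proposal is correct and coincides with the paper's intended argument: the paper's own proof is a one-line reference to "the argument given in the proof of Lemma~\ref{lem-const} together with Lemma~\ref{lem-integrable}," and your write-up is exactly the expansion of that reference, with the split of $\int_\Omega b\varphi$ into $B_{4R}$ (handled by Lemma~\ref{lem-integrable}) and its complement (handled by \eqref{eq-b-Holder}), and the independence argument via $\psi=\varphi-\overline{\varphi}$ exactly as in Lemma~\ref{lem-const}.
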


\begin{proof}
    It immediately follows from the argument given in the proof of Lemma~\ref{lem-const} together with Lemma~\ref{lem-integrable}.
\end{proof}

The sign of $K$ can be determined by elaborating the estimates for $I_2$, $I_3$, $I_4$ and $\int_{\Omega}b \varphi$ appearing in the proof of Lemma~\ref{lem-upper}, with the help of the removability theorem (Corollary~\ref{cor-rem}) and the upper bound (Lemma~\ref{lem-upper}). We consider the case of $sp<n$, but we conjecture that Lemma~\ref{lem-positive-K} is also true when $sp=n$.

\begin{lemma}\label{lem-positive-K}
Assume that $sp<n$. Let $u$ be given as in Lemma~\ref{lem-non-rem}. Then the value $K$ given by \eqref{eq-const-K} is positive.
\end{lemma}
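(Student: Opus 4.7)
We argue by contradiction: assume $K\le 0$. Since $u$ is bounded from below, we may and do assume $u>0$ in $\Omega\setminus\{0\}$. The plan is to upgrade the upper bound of Lemma~\ref{lem-upper} (which, by Lemma~\ref{lem-integrable}, now reads $u(x)\lesssim |x|^{-(n-sp)/(p-1)}$) into the strict improvement $u(x)=o(|x|^{-(n-sp)/(p-1)})$ as $x\to 0$. Once this is achieved, Corollary~\ref{cor-rem2} (applicable because $u$ is bounded from below) forces $0$ to be a removable singularity, contradicting Lemma~\ref{lem-non-rem}.

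To this end, retain the construction from the proof of Lemma~\ref{lem-upper}: pick $\eta\in C^\infty_c(B_{3R})$ with $\eta\equiv 1$ on $B_{2R}$, set $v=u\eta$, and define $\varphi=\min\{v/m_r,1\}$ with $m_r=\min_{\overline{B}_r\setminus B_{r/2}}u$. By Lemmas~\ref{lem-test-trunc} and \ref{lem-test-prod}, $\varphi\in W^{s,p}(\Omega)\cap L^\infty(\Omega)$ with compact support in $\Omega$ and $\varphi\equiv 1$ near $0$, so Lemma~\ref{lem-K} yields
\[
\mathcal{E}(u,\varphi)+\int_\Omega b(x,u)\,\varphi\,dx=K.
\]
Combining the pointwise inequality used for \eqref{eq-cutoff} with the splitting $\mathcal{E}(v,\varphi)=\mathcal{E}(u,\varphi)+(\mathcal{E}(v,\varphi)-\mathcal{E}(u,\varphi))$ and the hypothesis $K\le 0$, we arrive at
\[
m_r^{p-1}\mathcal{E}(\varphi,\varphi)\;\le\; I_1+I_2+I_3+I_4-\int_\Omega b(x,u)\varphi\,dx,
\]
where $I_1,\dots,I_4$ are the error terms of the decomposition \eqref{eq-upper-I1234}.

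The crucial step is to bound the right-hand side by a quantity of size $o(r^{n-sp})$. Inserting the complete upper bound $u(x)\lesssim |x|^{-(n-sp)/(p-1)}$ (Lemma~\ref{lem-upper}) into the estimates \eqref{eq-upper-I1}--\eqref{eq-upper-J2} sharpens each $I_i$ via H\"older's inequality and the $L^{n/(sp-\varepsilon)}$-integrability of $b_1,b_2$. For the integral term, use the decomposition
\[
-\int_\Omega b\,\varphi\,dx=-\int_\Omega b\,dx+\int_{\{v<m_r\}}b(1-\varphi)\,dx,
\]
noting that $\{v<m_r\}\subset\Omega\setminus B_r$ because $u\to\infty$ at the origin (Lemma~\ref{lem-non-rem}); Lemma~\ref{lem-integrable} then provides the decay $\int_{B_r}|b|\lesssim r^\varepsilon$ that is fed back into the remaining estimates. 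Dividing by the capacity lower bound $\mathcal{E}(\varphi,\varphi)\gtrsim r^{n-sp}$ (Lemma~\ref{lem-cap-ball}) and appealing to the Harnack inequality on annuli (Theorem~\ref{thm-Harnack-annulus}) produces $u(x)=o(|x|^{-(n-sp)/(p-1)})$ as claimed, and Corollary~\ref{cor-rem2} supplies the contradiction.

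The principal obstacle is producing genuine decay, rather than mere boundedness, on the right-hand side: under the fundamental-solution bound the naive estimates of $I_2,I_3,I_4$ scale like $R^{sp}\cdot R^{-sp}=O(1)$, so obtaining $o(r^{n-sp})$ is not automatic. This forces an iterative tightening of the bound, following the scheme of \cite[Lemma~2]{Ser65}: the initial bound $m_r^{p-1}\lesssim r^{-(n-sp)}$ is substituted back into the estimates of $I_2,I_3,I_4$ and $\int b\varphi$ to produce a strictly smaller coefficient, and the difference inequality thus obtained is iterated to zero. Fixing $R$ small (as anticipated in the displayed choice \eqref{eq-R}) ensures the iteration is contractive, and Corollary~\ref{cor-rem} is invoked along the way to rule out intermediate removable contributions that would otherwise trap the iteration.
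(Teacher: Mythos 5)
You correctly reduce to a contradiction argument and correctly identify the principal obstacle — that, with the fundamental-solution bound $u\lesssim|x|^{-\tau}$ (where $\tau=(n-sp)/(p-1)$), the naive estimates of $I_2,I_3,I_4$ only give an $O(1)$ coefficient times $m$, so that $m^{p-1}\mathcal{E}(\varphi,\varphi)\lesssim 1+m$ just reproduces $m\lesssim r^{-\tau}$ with no improvement. However, there are two genuine problems with the route you then propose.

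First, invoking Corollary~\ref{cor-rem2} to close the contradiction is circular. Corollary~\ref{cor-rem2} is derived from the dichotomy in Theorem~\ref{thm-iso-sub}, and the nontrivial alternative ($u\eqsim|x|^{-\tau}$) in that theorem requires the lower bound of Lemma~\ref{lem-lower}, which in turn uses the positivity of $K$ — the very statement you are proving. The paper instead uses Corollary~\ref{cor-rem}, which only depends on the removability theorem (Theorem~\ref{thm-rem}), and accordingly proves the \emph{stronger, quantitative} improvement $m\lesssim r^{-\tau(1-\delta)}$ for a fixed $\delta>0$ (rather than the soft $u=o(|x|^{-\tau})$ that you target), since that polynomial improvement is precisely what Corollary~\ref{cor-rem} asks for.

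Second, your mechanism for producing the decay — an iterative tightening of the bound by substituting $m^{p-1}\lesssim r^{-(n-sp)}$ back into the estimates of $I_2,I_3,I_4$ and $\int b\varphi$ and solving a difference inequality in the spirit of \cite[Lemma~2]{Ser65} — does not work. The $O(1)$ coefficients there come from integrals over \emph{fixed-size} annuli ($B_{4R}\setminus B_R$, $B_R\setminus B_{2R}$, etc.) and the tail $\mathrm{Tail}(u;0,R)$; these quantities do not shrink when you resubstitute the bound for $m_r$, so the iteration is not contractive. What actually produces the decay in the paper's proof is a domain decomposition $T_1=\{u<m\}\cap B_r^c$, $T_2=\{u\geq m\}\cup B_r$: on $T_1$, one multiplies by an extra factor $\varphi=u/m\leq 1$, gaining a $1/m$; and the key geometric observation — absent from your proposal — is that under the upper bound $u\lesssim|x|^{-\tau}$ and the standing assumption $m>r^{-\tau(1-\delta)}$, the set $T_2\cap B_R$ is contained in the shrinking ball $B_{\theta r^{1-\delta}}$ for a fixed large $\theta$ and all small $r$. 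That shrinking domain is what converts the $O(1)$ into $O(r^{(1-\delta)\varepsilon})$; without it there is no gain. (Relatedly, your claim $\{v<m_r\}\subset\Omega\setminus B_r$ is not justified and in fact need not hold; the paper adds $B_r$ explicitly to $T_2$ for precisely this reason.) The paper does invoke \cite[Lemma~2]{Ser64} at the very end, but only as an algebraic absorption tool once the decaying coefficient has already been obtained, not as an iteration scheme.
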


\begin{proof}
The upper bound in Theorem~\ref{thm-iso-sub} shows that
\begin{equation}\label{eq-upper}
u(x) \lesssim |x|^{-\tau} \quad\text{for all }x \in B_{R},
\end{equation}
where $\tau\coloneqq (n-sp)/(p-1)>0$. 

We suppose for contradiction that $K \leq 0$ and claim that there exist constants $A \geq 1$, $\delta \in (0,1)$ and $r_0 \in (0, R)$ such that
	\begin{equation}\label{eq-claim}
		m \leq A r^{-\tau(1-\delta)} \quad\text{for all}~r\in (0, r_0),
	\end{equation}
where $m=m_r$ is given as in \eqref{eq-Mm} if $n\geq 2$, and $m=m_r^\pm$ as in \eqref{eq-Mm-pm} if $n=1$.
Once we prove the claim \eqref{eq-claim}, then it follows from Theorem~\ref{thm-Harnack-annulus} that
 \begin{equation*}
\sup_{B_r \setminus B_{r/2}}{u} \leq C r^{-\tau(1-\delta)} + C \left(\frac{r}{R}\right)^{\frac{sp}{p-1}}\mathrm{Tail}(u_-; 0, R) + C\left(\frac{r}{R}\right)^{\frac{\varepsilon}{p-1}}\left( R^\varepsilon \|b_2\|_{L^{n/(sp-\varepsilon)}(B_R)} \right)^{\frac{1}{p-1}}
\end{equation*}
if $n\geq 2$, and from Theorem~\ref{thm-harnack} and the nonnegativity assumption on $u$ that
\begin{align*}
\sup_{I_r^\pm}{u}
&\leq C r^{-\tau(1-\delta)} + C \mathrm{Tail}(u_-; 3r/4, r/4) + C \left( r^\varepsilon \|b_2\|_{L^{n/(sp-\varepsilon)}(r/4, 5r/4)} \right)^{\frac{1}{p-1}} \\
&\leq C r^{-\tau(1-\delta)} + C \left( \frac{r}{R} \right)^{\frac{sp}{p-1}} \mathrm{Tail}(u_-; 0, R) + C \left( \frac{r}{R} \right)^{\frac{\varepsilon}{p-1}} \left( R^\varepsilon \|b_2\|_{L^{n/(sp-\varepsilon)}(0, 2R)} \right)^{\frac{1}{p-1}}
\end{align*}
if $n=1$, where $I_r^\pm$ is given in \eqref{eq-interval}.
	Hence, Corollary~\ref{cor-rem} shows that $u$ has a removable singularity at the origin, yielding a contradiction.

Let us now prove the claim \eqref{eq-claim}. Let $\eta$ and $\varphi$ be given as in the proof of Lemma~\ref{lem-upper}. By applying Lemma~\ref{lem-K} and repeating the estimates \eqref{eq-cutoff}--\eqref{eq-upper-I1}, we have
\begin{equation}\label{eq-pos-E}
\begin{aligned}
    m^{p-1} \mathcal{E}(\varphi, \varphi) &\leq |\mathcal{E}(u\eta, \varphi)-\mathcal{E}(u, \varphi)|+K+\left|\int_{\Omega} b(x, u(x))\varphi(x) \,\mathrm{d}x \right|\\
    &\leq \frac{C}{m} + C( I_2+I_3+I_4)+\left|\int_{B_{3R}} b(x, u(x))\varphi(x) \,\mathrm{d}x \right|.
\end{aligned}
\end{equation}
We first derive estimates for $I_2$, $I_3$ and $I_4$ sharper than \eqref{eq-upper-I2}--\eqref{eq-upper-I34}.
To this end, we define two sets
	\begin{equation*}
		T_1 \coloneqq \{u <m\} \cap B_r^c \quad \text{and} \quad T_2 \coloneqq \{u \geq m\} \cup B_r
	\end{equation*}
	so that $T_1 \cup T_2 = \mathbb{R}^n$.

We first use \eqref{eq-A} to see that
	\begin{equation*}
		\begin{aligned}
			I_2
			&\leq C\int_{T_1 \cap (B_{3R} \setminus B_R)} \int_{\Omega \setminus B_{4R}} \frac{u^{p-1}(x)+u^{p-1}(y)}{|x-y|^{n+sp}} \frac{u(x)}{m} \,\mathrm{d}y\,\mathrm{d}x\\
			&\quad+C\int_{T_2 \cap (B_{3R} \setminus B_R)} \int_{\Omega \setminus B_{4R}} \frac{u^{p-1}(x)+u^{p-1}(y)}{|x-y|^{n+sp}}  \,\mathrm{d}y\,\mathrm{d}x\\
			&\eqqcolon I_{2, 1}+I_{2, 2}.
		\end{aligned}
	\end{equation*}
It thus follows from $\|u\|_{L^\infty(B_{3R} \setminus B_R)} < \infty$ and \eqref{eq-upper-I2} that $I_{2,1} \leq C/m$. We postpone the estimate of $I_{2,2}$.

The estimates for $I_3$ and $I_4$ require the upper bound \eqref{eq-upper}. We first use \eqref{eq-A} and
\begin{equation*}
(u^{p-1}(x)+u^{p-1}(y))(u(x)+u(y)) \leq C(u^p(x)+u^p(y))
\end{equation*}
to see that
	\begin{equation*}
		\begin{aligned}
			I_3
			&\leq \frac{C}{m} \int_{T_1 \cap B_R} \int_{\Omega \setminus B_{2R}} \frac{u^p(x)+u^p(y)}{|x-y|^{n+sp}} \,\mathrm{d}y\,\mathrm{d}x\\
			&\quad+C\int_{T_2 \cap  B_R} \int_{\Omega \setminus B_{2R}} \frac{u^{p-1}(x)+u^{p-1}(y)}{|x-y|^{n+sp}}\left(1+\frac{u(y)}{m} \right)\,\mathrm{d}y\,\mathrm{d}x\\
			&\eqqcolon I_{3, 1}+I_{3, 2}.
		\end{aligned}
	\end{equation*}
Now by \eqref{eq-upper} and $\|u\|_{L^\infty(\Omega \setminus B_{2R})} < \infty$, it follows that
	\begin{equation*}
		\begin{aligned}
			I_{3, 1} \leq \frac{C}{m} \int_{B_R \setminus B_r} |x|^{-p\tau} \,\mathrm{d}x+\frac{C}{m} \leq \frac{C}{m} + \frac{C}{m}
			\begin{cases}
				1 &\text{if }n-p\tau > 0, \\
				\log (R/r) &\text{if } n-p\tau =0, \\
				r^{n-p\tau} &\text{if } n-p\tau<0.
			\end{cases}
		\end{aligned}
	\end{equation*}
Since $\log(R/r) \leq Cr^{-\kappa}$ for any $\kappa \in (0, \tau)$,
\begin{equation*}
I_{3, 1} \leq \frac{C}{m} \left( 1+r^{-\kappa}+r^{n-p\tau} \right).
\end{equation*}
Also, we have that
	\begin{align*}
			I_4
			&\leq C\int_{T_1 \cap B_{3R}} \int_{\mathbb{R}^n \setminus \Omega} \frac{u^{p-1}(x)+|u(y)|^{p-1}}{|x-y|^{n+sp}} \frac{u(x)}{m} \,\mathrm{d}y\,\mathrm{d}x\\
			&\quad+C\int_{T_2 \cap B_{3R}} \int_{\mathbb{R}^n \setminus \Omega} \frac{u^{p-1}(x)+|u(y)|^{p-1}}{|x-y|^{n+sp}}\,\mathrm{d}y\,\mathrm{d}x\\
			&\eqqcolon I_{4, 1}+I_{4, 2}
	\end{align*}
and that
	\begin{equation*}
			I_{4, 1} \leq \frac{C}{m} \int_{B_{3R} \setminus B_r} u^{p}(x)\,\mathrm{d}x+\frac{C}{m} \frac{\mathrm{Tail}^{p-1}(u; 0, R)}{R^{sp}} \int_{B_{3R} \setminus B_r} u(x) \,\mathrm{d}x.
	\end{equation*}
	Since $u \leq 1+u^p$, we obtain as in the estimate of $I_{3, 1}$ that
\begin{equation*}
I_{4, 1} \leq \frac{C}{m} \left( 1+r^{-\kappa}+r^{n-p\tau} \right).
\end{equation*}
	
Let us now focus on the terms $I_{2, 2}$, $I_{3, 2}$ and $I_{4, 2}$. Since $m \to \infty$ as $r \to 0$ by Lemma~\ref{lem-non-rem}, there exists sufficiently small $r_1 \in (0, R)$ such that $\|u\|_{L^\infty(\Omega \setminus B_{R})} <m$ if $r<r_1$. In particular, we have $T_2 \cap B_{3R} \subset T_2 \cap B_{R}$ when $r <r_1$, which shows that $I_{2, 2}=0$. Moreover, the choice of $r_1$ implies that
\begin{equation*}
I_{3, 2} + I_{4, 2} \leq C\int_{T_2 \cap B_{R}} \int_{\mathbb{R}^n \setminus B_{2R}} \frac{u^{p-1}(x)+|u(y)|^{p-1}}{|x-y|^{n+sp}}\,\mathrm{d}y\,\mathrm{d}x.
\end{equation*}
We only consider those values of $r>0$ for which $r^{-\tau(1-\delta)}<m$ since otherwise \eqref{eq-claim} holds trivially. Let $\theta>0$ be a constant that will be determined soon. The upper bound \eqref{eq-upper} of $u$ shows that for any $x \in B_{R} \setminus B_{\theta r^{1-\delta}}$
	\begin{equation*}
		u(x) \leq C|x|^{-\tau} \leq C \theta^{-\tau} r^{-\tau(1-\delta)} <C\theta^{-\tau}m<m,
	\end{equation*}
	provided that $\theta$ is chosen sufficiently large (independent of $r$). This implies that
	\begin{equation*}
		T_2 \cap B_{R} \subset B_r \cup B_{\theta r^{1-\delta}}.
	\end{equation*}
We now choose $r_0 \in (0, r_1)$ so that $r \leq \theta r^{1-\delta}$ for all $r \in (0, r_0)$ which leads to
	\begin{equation*}
		T_2 \cap B_{R} \subset B_{\theta r^{1-\delta}}.
	\end{equation*}
It thus follows from Lemma~\ref{lem-tail}, \eqref{eq-upper} and $\varepsilon<sp<n$ that
\begin{align*}
I_{3, 2} + I_{4, 2}
&\leq C\int_{B_{\theta r^{1-\delta}}} \int_{\mathbb{R}^n \setminus B_{2R}} \frac{u^{p-1}(x)+|u(y)|^{p-1}}{|x-y|^{n+sp}}\,\mathrm{d}y\,\mathrm{d}x \\
&\leq C \left( \int_{B_{\theta r^{1-\delta}}} u^{p-1} \,\mathrm{d}x + r^{(1-\delta)n} \mathrm{Tail}^{p-1}(u; 0, R) \right) \\
&\leq C r^{(1-\delta) sp} \leq C r^{(1-\delta) \varepsilon}.
\end{align*}

We next split the last integral term in \eqref{eq-pos-E} as
\begin{equation*}
    \left|\int_{B_{3R}} b\varphi\right| \leq \int_{T_1 \cap (B_{3R} \setminus B_{R})}|b|\varphi\,\mathrm{d}x+\int_{T_1 \cap B_{R}}|b|\varphi\,\mathrm{d}x+\int_{T_2 \cap B_{3R}}|b|\varphi\,\mathrm{d}x \eqqcolon J_1+J_2+J_3.
\end{equation*}
By employing a similar argument as before, we obtain that
\begin{equation*}
        J_1 \leq \frac{1}{m} \int_{T_1 \cap (B_{3R}\setminus B_{R})} (b_1u^{p-1}+b_2)u\,\mathrm{d}x \leq \frac{C}{m}.
\end{equation*}
Moreover, by following the proof of \eqref{eq-b-Holder}, using the fact that $u=m\varphi$ in $T_1 \cap B_R$ and assuming that $R<1$, we have
\begin{equation*}
    \begin{aligned}
        J_2 
        &\leq \int_{B_R} (b_1 (m\varphi)^{p-1}+b_2)\varphi\,\mathrm{d}x\\
        &\leq m^{p-1} |B_{R}|^{\frac{\varepsilon}{n}}\|b_1\|_{L^{\frac{n}{sp-\varepsilon}}(B_R)} \|\varphi\|_{L^{p_s^{\ast}}(B_{3R})}^{p}+|B_{R}|^{1-\frac{sp-\varepsilon}{n}-\frac{1}{p_s^{\ast}}}\|b_2\|_{L^{\frac{n}{sp-\varepsilon}}(B_R)}\|\varphi\|_{L^{p_s^{\ast}}(B_{3R})}\\
        &\leq Cm^{p-1} \|b_1\|_{L^{\frac{n}{sp-\varepsilon}}(B_R)} [\varphi]_{W^{s, p}(\mathbb{R}^n)}^{p}+C\|b_2\|_{L^{\frac{n}{sp-\varepsilon}}(B_R)}[\varphi]_{W^{s, p}(\mathbb{R}^n)},
    \end{aligned}
\end{equation*}
where $C=C(n, s, p, \varepsilon)>0$. We now choose $R$ sufficiently small so that
\begin{equation}\label{eq-R}
C\|b_1\|_{L^{n/(sp-\varepsilon)}(B_R)}\leq \frac{1}{4\Lambda}.
\end{equation}
Then, after an application of Young's inequality, we arrive at
\begin{equation*}
    J_2 \leq \frac{1}{2}m^{p-1}\mathcal{E}(\varphi, \varphi)+\frac{C}{m},
\end{equation*}
where $C=C(n, s, p, \Lambda, \varepsilon, \|b_2\|_{L^{n/(sp-\varepsilon)}(B_R)})>0$.

Finally, since $T_2 \cap B_{3R} \subset T_2 \cap B_{R} \subset B_{\theta r^{1-\delta}}$, we utilize Lemma~\ref{lem-integrable} to obtain
\begin{equation*}
    J_{3} \leq \int_{B_{\theta r^{1-\delta}}} |b(x, u(x))|\,\mathrm{d}x \leq Cr^{(1-\delta)\varepsilon}.
\end{equation*}
Therefore, combining all these estimates concludes that
	\begin{equation*}
		m^{p}\mathcal{E}(\varphi, \varphi) \lesssim 1+r^{-\kappa}+r^{n-p\tau}+mr^{(1-\delta)\varepsilon}.
	\end{equation*}
	By recalling \eqref{eq-upper-cap} for $n\geq 2$ and \eqref{eq-upper-cap-1d} for $n=1$, and applying 	Lemma~2 in Serrin~\cite{Ser64}, we have 
	\begin{equation*}
		m \lesssim r^{-\frac{n-sp+\kappa}{p}} + r^{-\tau+s} + r^{-\tau+\frac{(1-\delta)\varepsilon}{p-1}}.
	\end{equation*}
	We now choose a constant $\delta \in (0,1)$ small enough so that
	\begin{equation*}
		-\tau (1-\delta) \leq \min\left\{ -\frac{n-sp+\kappa}{p},\, -\tau+s,\, -\tau+\frac{(1-\delta)\varepsilon}{p-1}\right\},
	\end{equation*}
	which proves the claim \eqref{eq-claim}.
\end{proof}

The positivity of $K$ now allows us to prove the lower bound of singular solutions.

\begin{lemma}\label{lem-lower}
Assume that $sp<n$. Let $u$ be given as in Lemma~\ref{lem-non-rem}. Then
\begin{equation*}
u \gtrsim |x|^{-(n-sp)/(p-1)}
\end{equation*}
in a neighborhood of the origin.
\end{lemma}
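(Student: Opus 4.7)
The plan relies on the positivity of the invariant $K$ from Lemma~\ref{lem-positive-K}, combined with a test function built from the $(-\Delta_p)^s$-capacity potential. Writing $\tau \coloneqq (n-sp)/(p-1)$ and fixing $0 < r \ll R$, let $\psi_r \in W^{s,p}(\mathbb{R}^n) \cap L^\infty(\mathbb{R}^n)$ be a suitable approximation of the $(-\Delta_p)^s$-capacity potential of $\overline{B}_{r/2}$ with respect to $B_{3R}$: $\psi_r \equiv 1$ on a neighborhood of $\overline{B}_{r/2}$, $\supp \psi_r \subset B_{3R}$, $0 \leq \psi_r \leq 1$, and, by Lemma~\ref{lem-cap-ball},
\begin{equation*}
\mathcal{E}(\psi_r, \psi_r) \eqsim \mathrm{cap}_{s,p}(\overline{B}_{r/2}, B_{3R}) \eqsim r^{n-sp}.
\end{equation*}
One may also arrange the pointwise bound $\psi_r(x) \lesssim \min\{1, (r/|x|)^\tau\}$ inside $B_{3R}$, mirroring the profile of the fundamental solution $\Gamma_{s,p}$.

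Inserting $\psi_r$ into Lemma~\ref{lem-K} gives
\begin{equation*}
K = \mathcal{E}(u, \psi_r) + \int_\Omega b(x, u(x))\,\psi_r(x)\,\mathrm{d}x,
\end{equation*}
with $K>0$ independent of $r$ by Lemma~\ref{lem-positive-K}. Since $\|\psi_r\|_{L^\infty} \leq 1$ and $\supp \psi_r \subset B_{3R}$, Lemma~\ref{lem-integrable} yields $\bigl|\int b\,\psi_r\bigr| = o(1)$ as $r \to 0$. The task therefore reduces to the upper estimate $\mathcal{E}(u, \psi_r) \leq C\,m_r^{p-1}\,r^{n-sp} + o(1)$, where $m_r \coloneqq \min_{\overline{B}_r \setminus B_{r/2}} u$ (with the obvious one-sided variant $m_r^{\pm}$ when $n=1$); once this is in hand, combining the two inequalities gives $m_r^{p-1}\,r^{n-sp} \gtrsim K/2$ for small $r$, hence $m_r \gtrsim r^{-\tau}$.

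To obtain this estimate, control the bilinear form by
\begin{equation*}
\mathcal{E}(u, \psi_r) \leq C \int_{\mathbb{R}^n} u(x)^{p-1}\,\Psi_r(x)\,\mathrm{d}x, \qquad \Psi_r(x) \coloneqq \int_{\mathbb{R}^n} \frac{|\psi_r(x) - \psi_r(y)|}{|x-y|^{n+sp}}\,\mathrm{d}y,
\end{equation*}
using $|a-b|^{p-1} \lesssim |a|^{p-1} + |b|^{p-1}$ and the symmetry of $k$, then decompose the $x$-integral over $B_{r/2}$, the shell $B_r \setminus B_{r/2}$, and $B_r^c$. On the shell, Harnack's inequality (Theorem~\ref{thm-Harnack-annulus} for $n \geq 2$, Theorem~\ref{thm-harnack} for $n=1$) yields $u \eqsim m_r$, and since $\int_{B_r \setminus B_{r/2}} \Psi_r\,\mathrm{d}x \lesssim r^{n-sp}$, this region contributes $\lesssim m_r^{p-1}\, r^{n-sp}$, the dominant term. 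In $B_{r/2}$, where $\psi_r \equiv 1$, the ``interior'' interactions vanish, leaving only cross terms with $B_{r/2}^c$; applying Harnack on dyadic shells $B_{r/2^k}\setminus B_{r/2^{k+1}}$ together with the upper bound $u(x) \lesssim |x|^{-\tau}$ already supplied by the first half of Theorem~\ref{thm-iso-sub} matches this piece to the same scale $m_r^{p-1}\,r^{n-sp}$. Outside $B_r$, the pointwise decay of $\psi_r$ and tail estimates (Lemma~\ref{lem-tail}) produce strictly lower-order contributions. A final Harnack pass upgrades $m_r \gtrsim r^{-\tau}$ to the pointwise bound $u(x) \gtrsim |x|^{-\tau}$ throughout a neighborhood of the origin.

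The principal obstacle is the absence of a comparison principle for $\mathcal{L}$ directly suited to barrier arguments: in the local setting one would simply compare $u$ with a rescaled fundamental solution and read off the lower bound, whereas here the whole argument must pass through the nonlinear energy $\mathcal{E}(u, \psi_r)$ even though $u \notin W^{s,p}$ near the origin. The most delicate step is the contribution from $B_{r/2}$, where $u$ blows up; matching its scale to $m_r^{p-1}\,r^{n-sp}$ forces us to combine the upper bound $u \lesssim |x|^{-\tau}$ with Harnack on a dyadic sequence of annuli shrinking to $0$, rather than with a single global comparison. This is precisely what the introduction means by ``the upper bound of $u$ allows us to restrict our attention to the boundary behavior of $u$ near spheres.''
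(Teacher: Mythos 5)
Your overall plan starts at the same place as the paper: invoke the positive invariant $K$ from Lemma~\ref{lem-positive-K}, plug a capacity potential into Lemma~\ref{lem-K}, control the lower-order $b$-term via Lemma~\ref{lem-integrable}, and convert the resulting bound on $\mathcal{E}(u,\cdot)$ into a lower bound for $u$ on annuli using Lemma~\ref{lem-cap-ball} and Harnack. Where you diverge from the paper -- and where the argument has a genuine gap -- is in the claim that $\mathcal{E}(u,\psi_r)$ can be bounded directly by $C\,m_r^{p-1}r^{n-sp}+o(1)$ via the crude pointwise majorization $|\mathcal{E}(u,\psi_r)|\leq C\int u^{p-1}\Psi_r$.

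The problematic piece is exactly the one you single out as "most delicate'': the contribution from $B_{r/2}$, where $u$ is unbounded. For $x\in B_{r/4}$ you have $\Psi_r(x)\lesssim r^{-sp}$, so using the only available information near the origin, namely the upper bound $u(x)\lesssim|x|^{-\tau}$ (equivalently $u^{p-1}(x)\lesssim|x|^{-(n-sp)}$), you get
\begin{equation*}
\int_{B_{r/4}}u^{p-1}\,\Psi_r\,\mathrm{d}x\ \lesssim\ r^{-sp}\int_{B_{r/4}}|x|^{-(n-sp)}\,\mathrm{d}x\ \eqsim\ 1,
\end{equation*}
and summing over dyadic shells $B_{r/2^k}\setminus B_{r/2^{k+1}}$ only reproduces this $O(1)$ bound (Harnack compares $u$ within a single shell, and the chaining constant grows geometrically, so it cannot relate $u$ on inner shells back to $m_r$). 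You therefore end up with $\mathcal{E}(u,\psi_r)\leq C_0+\cdots$ for some absolute $C_0$ not controlled by $K$; since $m_r^{p-1}r^{n-sp}$ is a priori only $\lesssim 1$ by the upper bound, the claim that this piece is $\lesssim m_r^{p-1}r^{n-sp}$ implicitly assumes $m_r^{p-1}r^{n-sp}\gtrsim 1$, i.e.\ it assumes the conclusion. A similar $O(1)$ issue appears in the tail $\int_{B_R\setminus B_r}u^{p-1}\Psi_r$, which your outline dismisses as lower order without justification.

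The paper resolves precisely this difficulty by \emph{not} passing the whole energy through a pointwise $|u^{p-1}\Psi|$ bound. Two devices are used together. First, the potential $\phi$ is taken to equal $1$ on $\overline{B}_r$ while the inner decomposition radius is $\delta r$ with a \emph{free} small parameter $\delta$; the cross-term $I_3$ (the analogue of your $B_{r/2}$ contribution) then has size $O(\delta^{sp})$ and can be made $\le K/8$ by choosing $\delta$ once and for all. Second, the dominant piece $I_1$ on $(B_R\setminus B_{\delta r})^2$ is \emph{not} majorized pointwise; after Young's inequality it is fed back into $K$ through the identity $\widetilde{M}^{-1}\int\int|u(x)-u(y)|^p k=\mathcal{E}(v,\varphi)$ with $\varphi=\min\{u\eta/\widetilde{M},1\}$ and the monotonicity inequality, yielding $I_1\le\kappa K+C(\kappa)\widetilde{M}^{p-1}\mathcal{E}(\phi,\phi)+$ lower order. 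This bootstrap is what produces the clean inequality $K\leq C\widetilde{M}^{p-1}\mathcal{E}(\phi,\phi)$ without ever needing to bound $u$ near $0$ in terms of $m_r$. Your proposal, by collapsing $\mathcal{E}(u,\psi_r)$ to $\int u^{p-1}\Psi_r$ at the outset, discards exactly the cancellation and self-referential structure that make the estimate close, and no Harnack argument can recover it.
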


\begin{proof}
The upper bound in Theorem~\ref{thm-iso-sub} shows that \eqref{eq-upper} holds. Let $\delta \in (0, 1)$ be a constant to be determined later. For $r < \delta R$ we set 
\begin{equation*}
\widetilde{M}=\widetilde{M}_{\delta r}=\max_{\overline{B}_R \setminus B_{\delta r}}u.
\end{equation*}
Let $\phi$ be the $(-\Delta_p)^s$-potential of $\overline{B}_r$ in $B_{\delta R}$, as defined in Kim--Lee--Lee~\cite[Definition~2.15]{KLL23}, then $\phi$ satisfies the conditions (for $\varphi$) in Lemma~\ref{lem-K}. Thus, Lemmas~\ref{lem-K} and \ref{lem-positive-K} show that the value $K$ defined by \eqref{eq-const-K} is a positive constant. We write
\begin{align*}
K
	&= \iint_{(B_{R}^c \times B_{R}^c)^c \setminus (B_{\delta r} \times B_{\delta r})} |u(x)-u(y)|^{p-2}(u(x)-u(y))(\phi(x)-\phi(y)) k(x, y) \,\mathrm{d}y\,\mathrm{d}x \\
	&=\int_{B_{R} \setminus B_{\delta r}} \int_{B_{R} \setminus B_{\delta r}} |u(x)-u(y)|^{p-2}(u(x)-u(y))(\phi(x)-\phi(y)) k(x, y) \,\mathrm{d}y\,\mathrm{d}x \\
	&\quad +2\int_{B_R \setminus B_{\delta r}} \int_{\mathbb{R}^n \setminus B_{R}} |u(x)-u(y)|^{p-2}(u(x)-u(y))\phi(x) k(x, y) \,\mathrm{d}y\,\mathrm{d}x \\
	&\quad +2\int_{B_{\delta r}} \int_{\mathbb{R}^n \setminus B_{\delta r}} |u(x)-u(y)|^{p-2}(u(x)-u(y))(1-\phi(y)) k(x, y) \,\mathrm{d}y\,\mathrm{d}x \\
    &\quad +\int_{\Omega}b(x, u(x))\phi(x)\,\mathrm{d}x\\
	&\eqqcolon I_1+I_2+I_3+J.
\end{align*}
We first note that by Lemma~\ref{lem-integrable}, there exists a constant $\delta_1 \in (0, 1)$ such that 
\begin{equation*}
    |J|\leq \int_{B_{\delta R}} |b(x, u(x))|\,\mathrm{d}x \leq C(\delta R)^{\varepsilon} \leq \frac{K}{8}
\end{equation*}
for all $\delta \in (0, \delta_1]$.

We next let $\kappa > 0$. An application of Young's inequality shows that
\begin{equation*}
I_1 \leq \frac{\kappa}{\widetilde{M}} \int_{B_{R} \setminus B_{\delta r}} \int_{B_{R} \setminus B_{\delta r}} |u(x)-u(y)|^p k(x, y) \,\mathrm{d}y\,\mathrm{d}x + C \widetilde{M}^{p-1} \mathcal{E}(\phi, \phi)
\end{equation*}
for some $C=C(p, \kappa)>0$. Let $\eta \in C^\infty_c(\Omega)$ be such that $\eta = 1$ in $B_{2R}$ and $0 \leq \eta \leq 1$ in $\mathbb{R}^n$. If we define
\begin{equation*}
v=u\eta \quad\text{and}\quad \varphi=\min\{u\eta/\widetilde{M}, 1 \},
\end{equation*}
then the same argument as in the proof of Lemma~\ref{lem-upper} shows that $\varphi \in W^{s, p}(\Omega)$, $\supp \varphi \Subset \Omega$ and $\varphi = 1$ in a neighborhood of the origin. Note also that $v=u$ and $\varphi=u/\widetilde{M}$ in $B_R \setminus B_{\delta r}$. It thus follows that
\begin{align*}
&\frac{1}{\widetilde{M}} \int_{B_{R} \setminus B_{\delta r}} \int_{B_{R} \setminus B_{\delta r}} |u(x)-u(y)|^p k(x, y) \,\mathrm{d}y\,\mathrm{d}x \\
&=\int_{B_{R} \setminus B_{\delta r}} \int_{B_{R} \setminus B_{\delta r}} |v(x)-v(y)|^{p-2}(v(x)-v(y))(\varphi(x)-\varphi(y)) k(x, y) \,\mathrm{d}y\,\mathrm{d}x.
\end{align*}
Since $|v(x)-v(y)|^{p-2}(v(x)-v(y))(\varphi(x)-\varphi(y)) \geq \widetilde{M}^{p-1}|\varphi(x)-\varphi(y)|^p \geq 0$ for all $x, y \in \mathbb{R}^n$, we obtain that
\begin{align*}
I_1
&\leq \kappa \mathcal{E}(v, \varphi) + C(\kappa) \widetilde{M}^{p-1} \mathcal{E}(\phi, \phi) \\
&\leq \kappa K + \kappa |\mathcal{E}(u\eta, \varphi) - \mathcal{E}(u, \varphi)| + C(\kappa) \widetilde{M}^{p-1} \mathcal{E}(\phi, \phi).
\end{align*}
By arguing as in the proof of Lemma~\ref{lem-upper}, we deduce that
\begin{align*}
I_1 \leq \kappa \left( K + C + C/\widetilde{M} \right) + C(\kappa) \widetilde{M}^{p-1} \mathcal{E}(\phi, \phi).
\end{align*}
Since $\widetilde{M}>1$, we can make $\kappa (K+C+C/\widetilde{M}) \leq \kappa (K+2C) \leq K/8$ by taking $\kappa>0$ sufficiently small. It thus follows that
\begin{equation*}
I_1 \leq \frac{K}{8} + C\widetilde{M}^{p-1} \mathcal{E}(\phi, \phi).
\end{equation*}
For $I_2$, we use Lemma~\ref{lem-tail} to obtain that
\begin{align*}
I_2
&= 2\int_{B_{\delta R} \setminus B_{\delta r}} \int_{\mathbb{R}^n \setminus B_{R}} |u(x)-u(y)|^{p-2}(u(x)-u(y))\phi(x) k(x, y) \,\mathrm{d}y\,\mathrm{d}x \\
&\leq C \int_{B_{\delta R} \setminus B_{\delta r}} \int_{\mathbb{R}^n \setminus B_{R}} \frac{u^{p-1}(x)+|u(y)|^{p-1}}{|x-y|^{n+sp}} \,\mathrm{d}y\,\mathrm{d}x \\
&\leq \frac{C}{R^{sp}} \int_{B_{\delta R}} u^{p-1} \,\mathrm{d}x + C \frac{(\delta R)^n}{R^{sp}} \mathrm{Tail}^{p-1}(u; 0, R/2).
\end{align*}
Moreover, the pointwise upper bound \eqref{eq-upper} of $u$ shows that
	\begin{equation*}
		\int_{B_{\delta R}}u^{p-1} \,\mathrm{d}x \leq C(\delta R)^{sp}.
	\end{equation*}
Thus, there exists $\delta_2=\delta_2(n,s,p, u) \in (0,1)$ such that
\begin{equation*}
I_2 \leq K/8
\end{equation*}
for all $\delta \in (0, \delta_2]$.

Since $u>0$ in $B_{R}$,
\begin{align*}
	I_3
	&\leq 2\Lambda \int_{B_{\delta r}} \int_{\mathbb{R}^n \setminus B_{r}} \frac{(u(x)-u(y))_+^{p-1} (1-\phi(y))}{|x-y|^{n+sp}} \,\mathrm{d}y\,\mathrm{d}x\\
	&\leq 2\Lambda \int_{B_{\delta r}} \int_{\mathbb{R}^n \setminus B_{R}} \frac{(u(x)-u(y))^{p-1}_+}{|x-y|^{n+sp}} \,\mathrm{d}y\,\mathrm{d}x+2\Lambda \int_{B_{\delta r}} \int_{B_R \setminus B_{r}} \frac{u^{p-1}(x)}{|x-y|^{n+sp}} \,\mathrm{d}y\,\mathrm{d}x.
\end{align*}
By using Lemma~\ref{lem-tail} and \eqref{eq-upper} again, we obtain that
\begin{equation*}
I_3 \leq C\delta^{sp} +C(\delta r)^nR^{-sp} \mathrm{Tail}^{p-1}(u; 0, R/2) \leq C \delta^{sp}.
\end{equation*}
Thus, there exists $\delta_3=\delta_3(n,s,p, u) \in (0,1)$ such that
\begin{equation*}
I_3 \leq K/8
\end{equation*}
for all $\delta \in (0, \delta_3]$.

We now fix $\delta=\min\{\delta_1, \delta_2, \delta_3\} \in (0,1)$. By combining the estimates for $I_1$, $I_2$, $I_3$ and $J$, we arrive at
\begin{equation*}
K \leq C\widetilde{M}^{p-1} \mathcal{E}(\phi, \phi) \leq C\widetilde{M}^{p-1} [\phi]_{W^{s, p}(\mathbb{R}^n)}^p.
\end{equation*}
Since $\phi$ is the $(-\Delta_p)^s$-potential of $\overline{B}_r$ in $B_{\delta R}$, Lemma~2.16 (iii) in Kim--Lee--Lee~\cite{KLL23} and Lemma~\ref{lem-cap-ball} show that
\begin{equation*}
[\phi]_{W^{s, p}(\mathbb{R}^n)}^p = \mathrm{cap}_{s, p}(\overline{B}_r, B_{\delta R}) \eqsim \left(r^{-\frac{n-sp}{p-1}}-R^{-\frac{n-sp}{p-1}} \right)^{1-p} \lesssim r^{n-sp},
\end{equation*}
provided that $r>0$ is small enough. Therefore, we arrive at
\begin{equation}\label{eq-tildeM}
	\widetilde{M} \gtrsim r^{-(n-sp)/(p-1)}.
\end{equation}

It remains to transfer this information to the behavior of $u$ in an annulus whose inner and outer radii are comparable to $r$. We first assume that $n \geq 2$. We choose $\rho \in [\delta r, R]$ so that
\begin{equation*}
	\widetilde{M}=\max_{\partial B_{\rho}}u.
\end{equation*}
Then by \eqref{eq-tildeM} and \eqref{eq-upper}, we see that
\begin{equation*}
	c_1 r^{-(n-sp)/(p-1)} \leq \widetilde{M} \leq c_2 \rho^{-(n-sp)/(p-1)},
\end{equation*}
which implies that
\begin{equation*}
	\rho \leq (c_1/c_2)^{-(p-1)/(n-sp)} r\eqqcolon \overline{c}r.
\end{equation*}
Therefore, we conclude that
\begin{equation*}
	\widetilde{M}=\max_{\overline{B}_{\overline{c}r} \setminus B_{\delta r}}u,
\end{equation*}
and so the Harnack inequality (Theorem~\ref{thm-Harnack-annulus}) finishes the proof.

Next, we assume that $n=1$. In this case, we have
\begin{equation*}
    \widetilde{M} = \max_{[\delta r, R]} u \quad\text{or}\quad \widetilde{M} = \max_{[-R, -\delta r]} u.
\end{equation*}
Without loss of generality, we may assume that $\widetilde{M} = \max_{[\delta r, R]} u$. Then, by repeating the proof for the case $n\geq 2$ and using the Harnack inequality in Theorem~\ref{thm-harnack} instead of Theorem~\ref{thm-Harnack-annulus}, we conclude that
\begin{equation*}
    u \gtrsim |x|^{-(1-sp)/(p-1)}
\end{equation*}
in $[\delta r, \bar{c} r]$ for some $\bar{c}>0$. Moreover, the weak Harnack inequality developed in Lemma~\ref{lem-WHI} shows that
\begin{equation*}
	\left(\fint_{B_{\bar{c} r}} u^{p_0}\,\mathrm{d}x\right)^{\frac{1}{p_0}} \leq C \inf_{B_{\bar{c} r}} u+C \mathrm{Tail}(u_-; 0, R) + C\left( R^\varepsilon \|b_2\|_{L^{\frac{n}{sp-\varepsilon}}(B_R)} \right)^{\frac{1}{p-1}}.
\end{equation*}
The lower bound in the positive interval $[\delta r, \bar{c}r]$ is then transferred to the desired bound in the negative interval $[-\bar{c}r, -\delta r]$.
\end{proof}

\begin{proof}[Proof of the lower bound in Theorem~\ref{thm-iso-sub}]
As in the proof of the upper bound, we may assume that $u$ is positive in $\Omega \setminus \{0\}$. Thus, the desired result follows from Lemma~\ref{lem-lower}.
\end{proof}

\end{document}